\def\indic{{\rm {\large 1}\hspace{-2.3pt}{\large
l}}}
\def\R{{\mathbb R}}
\def\N{{\mathbb N}}
\def\C{{\mathbb C}}
\def\sech{{\rm sech}}
\newcommand{\df}[2]{\frac{#1 }{#2}}
\newcommand{\braket}[1]{\left\langle #1 \right\rangle }
\newcommand{\abs}[1]{ \left|  #1\right| }
\theoremstyle{definition}
\newtheorem{theorem}{Theorem}
\newtheorem{proposition}[theorem]{Proposition}
\newtheorem{lemma}[theorem]{Lemma}
\newtheorem{corollary}[theorem]{Corollary}
\begin{document}
\title[]{%Non-asymptotic 
Estimates for the SVD of the truncated Fourier transform 
on $L^2(\cosh(b|\cdot|))$ 
%of square-integrable functions with exponential weight 
and stable analytic continuation 
}

%an operator related to partial observation of the Fourier
%transform of functions non compactly supported data}
%on the eigenvalues and eigenfunctions of a finite convolution equation with the $\sech$ kernel}
%Uniform estimates of Morrison's  eigenfunctions and Spectral Approximation in Sobolev Spaces}

\author[Gaillac]{Christophe Gaillac$^{{(1),(2)}}$}
\address{$ ^{(1)}$ Toulouse School of Economics, Toulouse Capitole University, 21 all\'ee de Brienne, 31000 Toulouse, France.}
\address{$ ^{(2)}$ 
CREST, ENSAE, 5 avenue Henry Le Chatelier, 91764 Palaiseau, France.}
\email{\href{mailto:christophe.gaillac@tse-fr.eu}
{christophe.gaillac@tse-fr.eu}}

\author[Gautier]{Eric Gautier$^{(1)}$}
%\address{Toulouse School of Economics,
%21 all\'ee de Brienne, 31000 Toulouse, France.}
\email{\href{mailto:eric.gautier@tse-fr.eu}{eric.gautier@tse-fr.eu}}
\date{This version: \today. Preprint \href{https://arxiv.org/abs/1905.11338}{arXiv:1905.11338}} 

\thanks{\emph{Keywords}: Analytic continuation, Nonbandlimited functions, Heavy tails, Uniform estimates, Extrapolation, Singular value decomposition, Truncated Fourier transform, Singular Sturm Liouville Equations, Super-resolution.}
\thanks{\emph{AMS 2010 Subject Classification}: Primary 42C10, 45Q05; secondary 41A60, 45C05, 65L15.
}
\thanks{Eric Gautier acknowledges financial support from the grant ERC POEMH 337665 and ANR-17-EURE-0010. The authors are very grateful to Aline Bonami for discussions on her work and sending us  \cite{bonami2016}}
\begin{abstract}
The Fourier transform truncated on $[-c,c]$ has for a long time been analyzed as acting on $L^2(-1/b,1/b)$ into $L^2(-1,1)$ and its right-singular vectors are the prolate spheroidal wave functions.
This paper  paper considers the operator defined on the larger space $L^2(\cosh(b|\cdot|))$ on which it remains injective. The main purpose is (1) to provide nonasymptotic upper and lower bounds on the singular values with similar qualitative behavior in $m$ (the index), $b$, and $c$ and (2) to derive nonasymptotic upper bounds on the sup-norm of the right-singular functions. Finally, we propose a numerical method to compute the SVD. These are fundamental results for \cite{estimation}. This paper, considers as an illustrative example 
%These results are fundamental to obtain a feasible data-driven minimax estimator of a density in an inverse problem generalizing tomography with limited angles when the objects are unbounded or with unknown bounds  (see \cite{estimation}).
% The range of applications is much larger. In this paper, we apply the lower bounds and numerical method to the problem of %obtain rates of convergence for 
%%stable 
analytic continuation of nonbandlimited functions %whose Fourier transform belongs to $L^2(e^{b|\cdot|})$ %.  
%it to stable analytic continuation 
when the function is observed with error on an interval.% In the application we consider cases where the function that extrapolate is not bandlimited and when it is bandlimited but the bandlimits are unknown.} 
\end{abstract} 

\maketitle

\section{Introduction}\label{sec:intro}	
Extrapolating an analytic square integrable function $ f $ from its observation with error on $[-c,c]$ to $\R$ %and generalisation of this idea 
has a wide range of applications, for example in imaging and signal processing \cite{Gosse}, in geostatistics and with big data \cite{coifman2006geometric}, and finance \cite{gosse2010analysis}. A researcher may want to estimate a density from censored data. This means that the data is only available on a smaller set than the one of interest (see, \emph{e.g.}, \cite{belitser1998efficient,berman2007legendre}). When the function is a Fourier transform, this is a type of super-resolution in image restoration \cite{Iv_imagaing,bertero1984resolution,gerchberg1974super} which can be achieved under auxiliary information such as information on the support of the object. A related problem of out-of-band extrapolation (see, \emph{e.g.}, \cite{Marechal1,Iv_imagaing}) consists in recovering a function from partial observation of its Fourier transform. This is different from extrapolation because the object of interest is indirectly observed and the reconstruction does not rely on %it would not be stable to 
extrapolation of the Fourier transform. % before applying the Fourier transform. } %one is interested in 
A particular instance of this framework appears in the analysis of the random coefficients linear model (see, \emph{e.g.} \cite{estimation}). There, the model takes the form
\begin{equation}\label{eq}
Y = \alpha + \beta^{\top}X,
\end{equation} where $(\alpha,\beta^{\top})\in \R^{p+1}$
and $X\in \R^p$ are  independent random vectors, and the researcher has an independent and identically distributed sample of $(Y,X^{\top})$ from which he can 
estimate the Fourier transform of the density of the coefficients $(\alpha,\beta^{\top})$ on $\{ (t,tx): \ (t,x) \in \R\times \mathcal{X}\}$, where $\mathcal{X}\subseteq \R^p$ is the support of $X$ and the object of interest is the density the coefficients. In this problem, $\alpha$ is unbounded and if ever $\beta$ is bounded the bounds are usually unknown. 
This gives rise to an inverse problem generalizing inverse problems involving the Radon transform with partial observations. In the context of tomography, the objects are unbounded or with unknown bounds  (see \cite{estimation}) and the angles are randomly drawn, have a limited support, and the dimension is arbitrary. 

It is customary to rely on analytic functions and use Hilbert space techniques. % which are numerically more appealing than local methods. 
For the extrapolation problem, one can restrict attention to bandlimited functions  which are square integrable functions whose Fourier transforms have support in %some given symmetric interval 
$[-1/b,1/b]$. For out-of-band extrapolation, one can work with square-integrable functions whose support is a subset of $[-1/b,1/b]$ in which case their Fourier transform is analytic by the Paley-Wiener theorem (see \cite{ReedSimon}).  
Prolate spheroidal wave functions (henceforth PSWF, see \cite{Osipov,Slepian2}), also called ``Slepian functions", are the right-singular functions of the truncated Fourier transform restricted  to functions with support in $[-1/b,1/b]$. The truncated Fourier transform maps functions to their Fourier transform on $[-c,c]$. 
The PSWF form an orthonormal basis of the space $L^2(-c,c)$ of square-integrable functions on $(-c,c)$, are restrictions of square integrable orthogonal analytic functions on $\mathbb{R}$, and form a complete system of the bandlimited functions with bandlimits $[-1/b,1/b]$.  Hence, a bandlimited function on the whole line is simply the series expansion on the PSWF basis, also called Slepian series, whose coefficients only depend on the function on $(-c,c)$, almost everywhere on $\R$. This makes sense if we understand the PSWF functions as their extension to $\R$. In this framework, analytic continuation is an inverse problem in the sense that the solution does not depend continuously on the data, %(see, \emph{e.g.}, \cite{Engl}). M
more specifically severely ill-posed (see, \emph{e.g.}, \cite{fu2008simple,GrHo,shapiro1986reconstructing,zhang2011approximate}), and many methods have been proposed  (see, \emph{e.g.}, \cite{batenkov2018stable,bertero1979problems,chen10,coifman2006geometric,drouiche2001regularization,landau1986extrapolating,miller1970least,trefethen2019quantifying}). To obtain precise error bounds, it is useful to obtain nonasymptotic upper and lower bounds on the singular values of the truncated Fourier transform %with similar qualitative behavior in the parameters
rather than the more usual % in contrast with the large literature involving 
asymptotic estimates on a logarithmic scale.   
In several applications, uniform estimates on the right singular functions are useful as well. This occurs for example to show that certain nonparametric statistical procedures involving series are adaptive (see, \emph{e.g.},  \cite{chagny2015adaptive}). This means that an estimator with a data-driven smoothing parameter reaches the optimal minimax rate of convergence.  Importantly, such a program providing nonasymptotic bounds on the singular values and right singular functions has been carried recently in relation to bandlimited functions in \cite{bonami2018,bonami2016uniform,bonami2017approximations,Osipov1, Osipov,Xiao}. A second important aspect is the access to efficient methods to obtain the singular value decomposition (henceforth SVD). While numerical solutions to the inverse problems have for a long time relied on the Tikhonov or iterative methods such as the Landweber method (Gerchberg method for out-of band extrapolation, see \cite{Iv_imagaing}) to avoid using the SVD, recent developments have made it possible to approximate efficiently the PSWF and the SVD  (see Section 8 of \cite{Osipov}). 

Assuming that the function observed on an interval is the restriction of a bandlimited function can be questionable (see \cite{SlepianB}). For example, in the case of censored data, the observed function is a truncated density and the underlying function a density, and none of the usual families are bandlimited. Moreover, even if the function were bandlimited, one would require an upper bound on $1/b$ which might not be available in practice (see \cite{slepian1983some}). In the nonparametric random coefficients model \eqref{eq}, the bandlimited assumption amounts to a bounded vector $(\alpha,\beta^\top)$ which is not a meaningful assumption. 
For this reason, this paper considers the substantially larger class of functions whose Fourier transforms belong to the space $L^2(e^{b|\cdot|})$
of square-integrable functions with weight function $e^{b|\cdot|}$. 
%It means that we replace the compact support of the Fourier transform of the function by a weaker integrability property. 
This is the largest space that we can consider to extrapolate a function with Hilbert space techniques  because, for $a>0$, $\left\{f\in L^2(\R):\ \forall b<a,\ %x\mapsto e^{b|x|}f(x)\in 
\mathcal{F}[f]\in L^{2}\left(e^{b|\cdot|}\right)\right\}$ is the set of square-integrable functions which have an analytic continuation on $\{z\in\C:\ |\mathrm{Im}(z)|<a/2\}$ (see Theorem IX.13 in \cite{ReedSimon}).  
%  Hence, when the function is the Fourier transform, the space $L^{2}\left(\cosh(b\cdot)\right)$ is the largest space that we can consider for the Fourier transform of a square-integrable function that we can extrapolate with Hilbert space techniques. 
%This is an alternative assumption under which super-resolution is possible.  %and this is the right framework to study extrapolation of such functions. %e could argue that it is more natural than the space of bandlimited functions. 
%%\left\{f\in L^2(\R):\ %x\mapsto  e^{b|\cdot|}f%(x)\in L^{2}\left(\R\right)\right\}$$ %\blue{but their scalar products differ} 
%, or an approximation of it, like in the random coefficients linear model (see \cite{estimation}),. 
%When we rather observe the Fourier transform on a domain, or an approximation of it, like in the random coefficients linear model (see \cite{estimation}), the above shows that assuming the probability density of the random coefficients belongs to $L^2(\cosh(b\cdot))$ means that its Laplace transform is finite near 0 or equivalently that it does not have heavy-tails. 
The broader class of functions whose Fourier transforms belong to $L^2(e^{b|\cdot|})$ has rarely been used in this context and, unlike the PSWF, much fewer results are available, with the notable exception of \cite{morrison1962commutation,Widom}. It is considered in \cite{belitser1998efficient}  in the case of censored data and in \cite{estimation} for the problem of estimating  the density of random coefficients in the linear random coefficients model. %In both problems, it is sometimes not desirable to assume that the density has a compact support and that the researcher knows a superset which contains the support. 
There, it is related to a fundamental property of the law of the random coefficients: namely it does not have heavy-tails.  Equivalently, this means that its Laplace transform is finite near 0. 

In this paper, we use the weight $\cosh(b\cdot)$ rather than $e^{b|\cdot|}$ because the Fourier transform of $\sech=1/\cosh$ is essentially itself and, though with a different scalar product, $L^{2}\left(\cosh(b\cdot)\right)=L^{2}\left(e^{b|\cdot|}\right)$. Theorem II in \cite{Widom} provides, for given $b,c>0$ and a value of the index $m$ going to infinity,  an equivalent of the logarithm of the singular values of the truncated Fourier transform %$\mathcal{F}_{b,c}$ 
acting on $L^{2}\left(\cosh(b\cdot)\right)$. Such an equivalent is important but this result is silent on the polynomial preexponential factors, their dependence with respect to $c$ and $b$, and to deduce upper and lower bounds on the singular values which hold for all $m$, $c$, and $b$. 
This behavior is important in \cite{estimation} where we integrate the bounds over $c$ in intervals $[a,b]$ where $a$ can be arbitrarily close to $0$.  

This paper provides nonasymptotic upper and lower bounds on the singular values, with similar qualitative behavior, and applies the lower bounds to error bounds for stable analytic continuation using the spectral cut-off method. There, the nonasymptotic lower bounds are important to obtain a tight polynomial rates of convergence for ``supersmooth functions". 
We also analyze a differential operator which commutes with a symmetric integral operator obtained by applying the truncated Fourier operator to its adjoint. The corresponding eigenvalue problem involves singular Sturm-Liouville equations. This allows to prove uniform estimates on the right-singular functions. Working with this differential operator is useful because its eigenvalues increase quadratically while those of the integral operator decrease exponentially.  
Solving numerically singular differential equations allows to obtain these functions, hence all the SVD. 
These results are fundamental to obtain a feasible data-driven minimax estimator of the density of $(\alpha,\beta^\top)$ in \eqref{eq}  (see \cite{estimation}). However, the range of applications is much larger. For the sake of simplicity, we illustrate some of our results with the relatively simpler problem of extrapolation. We apply the lower bounds and numerical method to the problem of %obtain rates of convergence for 
%stable 
analytic continuation of possibly nonbandlimited functions whose Fourier transform belongs to $L^2(e^{b|\cdot|})$ by truncated SVD %.  
%it to stable analytic continuation 
when the function is observed with error on an interval.
%We illustrate these results with the problem of stable analytic continuation and analyse a particular method involving truncation of the SVD. 
%Finally, we illustrate numerically the proposed method for stable analytic continuation by spectral cut-off.  
We propose an adaptive method to select the truncation. When the function is bandlimited and the researcher knows an interval which contains the bandlimits, we rely on the PSWF and efficient methods to compute the SVD. We then consider the case when the researcher does not have 
prior information on the bandlimits and questions the fact that the function can be bandlimited. This requires the developments of this paper and recent  %illustrate the proposed method involving 
numerical schemes for solving singular differential equations. Extrapolation is simply used as an illustration.
Developments on alternative methods and extensions (for example with a random error or applying the canvas of \cite{Gosse,Gosse13}) require further research. 
%Among the several techniques that have been used to perform extrapolation (see \emph{e.g.}, \cite{batenkov2018stable,chen2006efficient})
% perform well for the subset of analytic functions that are They use the projection of the functions on a basis of $L^2(-c,c)$ which can be extended to analytic functions. \blue{(les prolates? Si oui il faut dire que c'est aussi une famille orthogonale complete de l'espace des fonctions bandlimitees)}. 
% Assuming that its Fourier transform  has compact support is a well-known sufficient condition for a function to be analytic. Indeed, the Paley-Wiener theorem ensures that $f$ can be extended to the complex plane
%as an entire function of exponential type (see, \emph{e.g.}, \cite{levin1996lectures}). 

%Here, these are many methods of analytic continuation (see, \emph{e.g.}, \cite{Gosse}) with fast regularisation algorithms (\cite{Iv_imagaing,coifman2006geometric,drouiche2001regularization,gerchberg1974super,lindberg2012mathematical,papoulis1975new}). 

\section{Preliminaries}\label{sec:prel}
We use $\N_0$ for the set of nonnegative natural numbers, $a\vee b$ for the maximum of $a$ and $b$, \emph{a.e.} for almost everywhere, and $f(\cdot)$ for a function $f$ of some generic argument. We denote, for $a>0$, by $L^2(-a,a)$ and $L^2(\R)$ the usual $L^2$ spaces 
of  complex-valued functions  equipped with the Hermitian inner product, for example
$ \braket{f,g}_{L^2(-a,a)} = \int_{-a}^a f(x)\overline{g}(x)dx$, by $L^2(W)$ for a positive function $W$ on $\R$ the weighted $L^2$ spaces %on $\R$ 
equipped with $ \braket{f,g}_{L^2(W)} = \int_{\R} f(x)\overline{g}(x)W(x)dx$,  %by $\|\cdot\|_{op}$ the operator norm from $L^2(-1,1)$ to $L^2(-1,1)$, 
and by $S^{\perp}$ the orthogonal complement of the set $S$ in a  Hilbert space. We denote by $\|f\|_{L^{\infty}([a,b])}$ the sup-norm of the function $f$ on $[a,b]$. %For two differentiable functions $f$ and $g$, their Wronskian is $W(f,g)=f g'-g f'$. 
%We denote by  $AC_{loc}(-1,1)$ the set of functions that are locally absolutely continuous  on $(-1,1)$. 
The inverse of a mapping $f$, when it exists, is denoted by $f^I$. We denote, for $b,c>0$, by 
\begin{equation}\label{eqdef1} 
%\begin{equation}\label{eqdef} 
\begin{array}{cccc}
\mathcal{C}_{c}: & L^{2}\left(\R\right) & \rightarrow &  L^{2}\left(\R\right)  \\
				     &      f		      & \mapsto &   cf(c\cdot)
\end{array} ,\quad
\begin{array}{cccc}
\mathcal{F}_{b,c} : & L^{2}\left(\cosh(b\cdot)\right) & \rightarrow &  L^2(-1,1) \\
			    & f 		& \mapsto &   \mathcal{F}\left[  f \right](c\cdot)
\end{array},  
\end{equation}
 by  $\mathcal{F}\left[f\right]=\int_{\mathbb{R}}e^{ix\cdot}f(x)dx$ the Fourier transform of $f$ in $L^1\left(\mathbb{R}\right)$ and also use the notation $\mathcal{F}\left[f\right]$ for the Fourier transform in $L^2\left(\mathbb{R}\right)$. %The space $L^{2}\left(\cosh(b\cdot)\right)$ contains all subspaces of $L^{2}(\R)$ functions with support in some given bounded interval around 0 as well as functions with unbounded support. %This is related to a natural space to extend the Paley-Wiener theorem to Hilbert spaces of functions which could have noncompact support under \blue{a sufficient notion of decay at infinity.
$\mathcal{O}^*$ denotes the Hermitian adjoint of an operator $\mathcal{O}$.
% hence for all $\eta\in\R$ such that $|\eta|<a/2$, we have $\mathcal{F}[f](\cdot +i\eta)\in L^2(\R)$ and, for all $b<a$, 
%$$\sup_{|\eta|\le b}\left\|\mathcal{F}[f](\cdot +i\eta)  \right\|_{L^2(\R)}<\infty.$$
%This is related to the Cram\'er condition which states that  %(see, \emph{e.g.}, (7) in \cite{identification}), 
%if $f\in L^{2}\left(e^{b|\cdot|}\right)$ 
%then $f$ is characterized by its values on a set of nonempty interior \blue{(c'est vraiment ca??)}. % (see, \emph{e.g.},\cite{identification} for a survey). 
Recovering $f$ such that for $b$ small enough $f\in L^{2}\left(\cosh(b\cdot)\right)$ based on its Fourier transform on $[-c,c]$ amounts to inverting $\mathcal{F}_{b,c}$. This can be achieved using the SVD. Define the finite convolution operator
\begin{equation}\label{eq:Qc}
\begin{array}{cccc}
\mathcal{Q}_c: &L^2(-1,1) &\to& L^2(-1,1)\\
&h&\mapsto&\int_{-1}^{1} \pi c\ \sech\left(\frac{\pi c}{2}(\cdot -y)\right) h(y)dy.
\end{array}
\end{equation}
It is a trace class (see 
%When $W=\cosh(\cdot/R)$ (respectively $W=i_{[-R,R]}$), 
Theorem 2 in \cite{Widom}), symmetric, and positive operator on spaces of real and complex valued functions.
%The operator $\mathcal{Q}_c$ has been studied in \cite{Widom}. 
Denote by $\left(\rho_m^{c}\right)_{m\in\N_0}$ its positive real eigenvalues in decreasing order and  repeated according to multiplicity and by $\left(g_m^{c}\right)_{m\in\N_0}$ its eigenfunctions which can be taken to be real valued. 
The next proposition %justifies that this paper uses $L^{2}\left(\cosh(b\cdot)\right)$ rather than $L^{2}\left(e^{b|\cdot|}\right)$. It 
relies on the fact that, for all $c>0$, $\mathcal{F}\left[\sech(c \cdot)\right](\star)=(\pi/c)\sech(\pi\star/(2c))$.

 %The functions $ \left(g_m^{	c}\right)_{m\in\N_0}  $ share some properties with the prolate spheroidal wave functions (hereafter PSWFs, see, \emph{e.g.}, \cite{Slepian2}), which constitute an orthogonal basis for the space of $c$-bandlimited functions, namely functions in $L^2(\R)$ which Fourier transform has compact support. 

%In Section \ref{sec:analy},  we illustrate how these bounds can be used to derive orders	 of convergence of the problem of analytic continuation of functions which Fourier transform belongs to $L^2(\cosh(b\cdot))$. In Section \ref{sec:diff}  we derive important properties of a differential operator that commutes with $\mathcal{Q}_c$ which are important to prove uniform estimates on the singular functions $ g_{m}^{c} $ in Section \ref{sec:unif}. These upper bounds are fundamental to get adaptive rates of convergence for regularisation methods based on $ g_{m}^{c} $ such as \cite{estimation}. Here, our paper differs from techniques used in \cite{batenkov2018stable} because we assume smoothness of the Fourier transform and consider a ``hard cutoff" observation window of the Fourier transform, whereas they consider an approximation of the form $ \exp(-\abs{\cdot}^{\alpha})$, for large $\alpha>0$. Finally, Section \ref{sec:nmerical} concludes with a numerical application on how to compute efficiently the SVD of $\mathcal{F}_{b,c}$ and on analytic continuation.  

\begin{proposition}\label{prop:Q}
For $b,c >0$, we have $c\mathcal{F}_{b,c}\mathcal{F}_{b,c}^*=  \mathcal{Q}_{c/b}$.
\end{proposition}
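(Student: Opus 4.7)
The plan is to compute the adjoint $\mathcal{F}_{b,c}^*$ explicitly, then compose and swap the order of integration so that the kernel reduces to the Fourier transform of $\sech$ given just before the statement.

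\textbf{Step 1 (adjoint).} For $f\in L^2(\cosh(b\cdot))$ and $h\in L^2(-1,1)$, using Fubini,
\begin{equation*}
\braket{\mathcal{F}_{b,c} f, h}_{L^2(-1,1)}
=\int_{-1}^{1}\!\int_{\R}e^{icxy}f(y)\,dy\,\overline{h(x)}\,dx
=\int_{\R}f(y)\,\overline{\left(\frac{1}{\cosh(by)}\int_{-1}^{1}e^{-icxy}h(x)\,dx\right)}\cosh(by)\,dy.
\end{equation*}
Recognizing the right-hand side as $\braket{f,\mathcal{F}_{b,c}^*h}_{L^2(\cosh(b\cdot))}$ gives the formula
\begin{equation*}
\mathcal{F}_{b,c}^* h(y)=\frac{1}{\cosh(by)}\int_{-1}^{1}e^{-icxy}h(x)\,dx.
\end{equation*}
A brief check is needed to make sure this is well defined: the inner integral is bounded by $\|h\|_{L^2(-1,1)}\sqrt{2}$, so dividing by $\cosh(by)$ gives a function whose $L^2(\cosh(b\cdot))$ norm is finite.

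\textbf{Step 2 (compose and swap).} For $x\in(-1,1)$,
\begin{equation*}
\mathcal{F}_{b,c}\mathcal{F}_{b,c}^* h(x)
=\int_{\R}e^{icxy}\,\frac{1}{\cosh(by)}\int_{-1}^{1}e^{-icty}h(t)\,dt\,dy
=\int_{-1}^{1}h(t)\int_{\R}\frac{e^{ic(x-t)y}}{\cosh(by)}\,dy\,dt.
\end{equation*}
Fubini is justified here because the double integral converges absolutely: $|h(t)|/\cosh(by)$ is integrable on $(-1,1)\times\R$.

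\textbf{Step 3 (evaluate the inner integral).} Applying the identity $\mathcal{F}[\sech(c\cdot)](\star)=(\pi/c)\sech(\pi\star/(2c))$ with $c$ replaced by $b$ and evaluated at $s=c(x-t)$,
\begin{equation*}
\int_{\R}\frac{e^{ic(x-t)y}}{\cosh(by)}\,dy=\frac{\pi}{b}\sech\!\left(\frac{\pi c(x-t)}{2b}\right).
\end{equation*}
Multiplying the resulting expression for $\mathcal{F}_{b,c}\mathcal{F}_{b,c}^*h(x)$ by $c$ yields exactly
\begin{equation*}
\int_{-1}^{1}\pi\frac{c}{b}\sech\!\left(\frac{\pi (c/b)(x-t)}{2}\right)h(t)\,dt=\mathcal{Q}_{c/b}h(x),
\end{equation*}
which is the claim.

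The only delicate point is the bookkeeping of the weight: the factor $1/\cosh(by)$ coming from the adjoint is precisely what turns the inner integral into a Fourier transform of $\sech$, and the rescaling factor $c$ on the left is what converts the $1/b$ prefactor into $c/b$ to match $\mathcal{Q}_{c/b}$.
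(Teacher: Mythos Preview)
Your proof is correct and follows essentially the same route as the paper: compute the adjoint (picking up the factor $\sech(b\cdot)$ from the weighted inner product), compose, and identify the resulting kernel via the Fourier transform of $\sech$. The only difference is stylistic: the paper carries this out in operator notation (using the extension, reflection, and dilation operators $\mathcal{E},\mathcal{R},\mathcal{C}_c$), whereas you write out the integrals directly; the underlying argument and the key identity $\mathcal{F}[\sech(b\cdot)](\star)=(\pi/b)\sech(\pi\star/(2b))$ are the same.
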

\begin{proof}
%\noindent{\bf Proof.}
%Denote by $W(\cdot) = \cosh(b\cdot)$. 
In this proof, $\mathcal{E}$ is the operator from $L^2(-1,1)$ to $L^{2}\left(\cosh(b\cdot)\right)$ which assigns the value 0 outside $[-1,1]$ and % $\mathcal{E}$ is the operator which extends a function in $L^2(-1,1)$ to $L^{2}\left(\R\right)$ by assigning the value 0 outside $[-1,1]$ and 
$\mathcal{R}:\ L^2(\R)\to L^2(\R)$ is such that $\mathcal{R} f=f(- \cdot)$.
Because $\mathcal{F}_{b,c}=\mathcal{F}\mathcal{C}_{c^{-1}}=c^{-1}\mathcal{C}_{c}\mathcal{F}$, $\mathcal{R}\mathcal{F}_{b,c}=\mathcal{F}_{b,c}\mathcal{R}$, 
\begin{equation}\label{eq:adjoint}
\mathcal{F}_{b,c}^*=\sech(b\cdot)\mathcal{R}\mathcal{F}_{b,c}\mathcal{E},
\end{equation} and $\sech(b\cdot)	$ is even, we obtain
$\mathcal{F}_{b,c}^*=\mathcal{R}\left[\sech(b\cdot)\mathcal{F}_{b,c}\mathcal{E}\right]$ and 
\begin{align*}
c\mathcal{F}_{b,c}\mathcal{F}_{b,c}^*&=c\mathcal{R}\mathcal{F}_{b,c}\left[\sech(b\cdot)\mathcal{F}_{b,c}\mathcal{E}\right]
%\\&
=2\pi\mathcal{F}^I\left[\mathcal{C}_{c^{-1}}\left[\sech(b\cdot)\mathcal{C}_c\mathcal{F}\mathcal{E}\right]\right]
%\\&
=2\pi c \mathcal{F}^I\left[\mathcal{C}_{c^{-1}}\left[\sech(b\cdot)\right]\mathcal{F}\mathcal{E}\right],
%&=2\pi\mathcal{F}^I\left(\mathcal{C}_{c^{-1}}\left(W^{-1}\right)\right)\ast \mathcal{E}, 
\end{align*}
where, for \emph{a.e.}	 $x\in \R$, 
\begin{align*}
2\pi c\mathcal{F}^I\left[\mathcal{C}_{c^{-1}}\left[\sech(b\cdot)\right]\right](x)&=\int_{\R}  e^{-itx} \sech\left(\dfrac{bt}{c}\right) dt= \dfrac{\pi c}{b} \sech\left(\dfrac{\pi c }{2b} x\right)  .
\end{align*}
As a result, we have, for $f\in L^2(-1,1)$,
\begin{align*}
c\mathcal{F}_{b,c}\mathcal{F}_{b,c}^*[f]&= \mathcal{C}_{\pi c/(2b)}\left[2 \sech\right]\ast \mathcal{E} [f] =\mathcal{Q}_{ c/b}[f].
\end{align*}
\end{proof}
%\hfill$\square$%\vspace{0.3cm}
Proposition \ref{prop:Q} yields that $(g_m^{c/b})_{m\in\N_0}$ are the right singular functions of $\mathcal{F}_{b,c}$. The SVD of $\mathcal{F}_{b,c}$, denoted by $\left(\sigma_m^{b,c},\varphi_m^{b,c},g_m^{ c/b}\right)_{m\in\N_0}$, is such that, for $m\in\N_0$, %the singular values $\left(\sigma_m^{b,c}\right)_{m\in \N_0}$ satisfy 
\begin{equation}\label{esigmarho}
\sigma_m^{b,c} = \sqrt{\frac{\rho_m^{ c/b}}{c}}\end{equation} 
 and %the left-singular functions satisfy 
 $\varphi_{{m}}^{b,c}=\mathcal{F}_{b,c}^*g_{m}^{ c/b}/\sigma_{{m}}^{b,c}$. It yields 
\begin{equation}\label{ecore}
\boxed{\quad\forall b,c>0,\ \forall f\in L^{2}\left(\cosh(b \cdot)\right),\  f=\sum_{m\in \N_0}\frac{1}{\sigma_{m}^{b,c}}\left\langle \mathcal{F}_{b,c}\left[f\right],g_{m}^{ c/b}\right\rangle_{L^2(-1,1)}\varphi_{{m}}^{b,c}.\quad}
\end{equation}
% \end{box}
\eqref{ecore} is a core element for out-of-band extrapolation %to approximate a function from partial observations of its Fourier transform 
when the signal $f$ does not have compact support. The stepping stone for extrapolation is \eqref{stepping} in Section \ref{sec:analy}. It is important to decouple $b$ and $c$ because $b$ is a feature of the unknown function while $2c$ is a feature of the observed data, namely the window of observations of the Fourier transform. Though $c/b\simeq 1$ is a popular regime in signal processing, we do not restrict ourselves to this setting. In \cite{estimation}, $c$ can get arbitrarily large or small and $b$ is fixed and indices the class of functions.  %There, $b$ is unknown and rates of convergence are given but and all cases are analyzed. Indeed,  
%For $m\in\N_0$, the corresponding singular values are  $\sigma_m^{b,c}=\sqrt{\rho_{{m}}^{c/b}/c}$ and the left singular functions $\varphi_{{m}}^{b,c}=\mathcal{F}_{b,c}^*g_{m}^{c/b}/\sigma_{{m}}^{b,c}$. 
%\blue{(Gather somewhere the result of Proposition 2 in \cite{estimation})}
\begin{proposition}\label{sec:upper:extension} For all $b,c>0$, $\mathcal{F}_{b,c}$ is injective and $\left(\varphi_m^{b,c}\right)_{m\in\N_0}$  is a basis of $L^{2}(\cosh(b\cdot))$.
\end{proposition}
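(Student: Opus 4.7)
The proof splits into two independent pieces: (i) injectivity of $\mathcal{F}_{b,c}$, which is a Paley--Wiener style argument, and (ii) showing $(\varphi_m^{b,c})_{m\in\N_0}$ is an orthonormal basis, which is the usual SVD construction together with the fact that the $(g_m^{c/b})_{m\in\N_0}$ already form a basis of $L^2(-1,1)$.

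\textbf{Injectivity.} Since $\cosh(bx)$ and $e^{b|x|}$ are equivalent as weights, $L^2(\cosh(b\cdot))=L^2(e^{b|\cdot|})$ as sets. Given $f\in L^2(\cosh(b\cdot))$, one has $f\in L^2(e^{b'|\cdot|})$ for every $b'<b$, so the theorem of Reed--Simon cited in the introduction (Theorem~IX.13 in \cite{ReedSimon}), applied with the roles of $f$ and $\mathcal{F}[f]$ swapped, yields that $\mathcal{F}[f]$ admits an analytic continuation to the strip $\{z\in\C:\ |\mathrm{Im}(z)|<b'/2\}$ for every $b'<b$, hence to $\{z\in\C:\ |\mathrm{Im}(z)|<b/2\}$. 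If $\mathcal{F}_{b,c}[f]=0$, then $\mathcal{F}[f]$ vanishes on the segment $[-c,c]$ of the real axis, so by the identity principle it vanishes throughout the strip, and in particular on $\R$, which gives $f=0$.

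\textbf{Basis property.} First, $(g_m^{c/b})_{m\in\N_0}$ is an orthonormal basis of $L^2(-1,1)$: by Proposition~\ref{prop:Q}, $\mathcal{Q}_{c/b}=c\,\mathcal{F}_{b,c}\mathcal{F}_{b,c}^*$ is compact, self-adjoint and positive, and it is injective because $\mathcal{Q}_{c/b}g=0$ implies $\|\mathcal{F}_{b,c}^*g\|^2=0$, while from \eqref{eq:adjoint} one sees that $\mathcal{F}_{b,c}^*g=\sech(b\cdot)\,\mathcal{R}\mathcal{F}_{b,c}\mathcal{E}g$ and $\sech(b\cdot)>0$ forces $\mathcal{F}[\mathcal{E}g]\equiv 0$, hence $g=0$. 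The spectral theorem for compact self-adjoint injective operators then gives the basis property.

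\textbf{Orthonormality and completeness of $(\varphi_m^{b,c})$.} A direct computation using $\varphi_m^{b,c}=\mathcal{F}_{b,c}^*g_m^{c/b}/\sigma_m^{b,c}$ and \eqref{esigmarho} gives
\begin{equation*}
\langle\varphi_m^{b,c},\varphi_n^{b,c}\rangle_{L^2(\cosh(b\cdot))}
=\frac{1}{\sigma_m^{b,c}\sigma_n^{b,c}}\langle\mathcal{F}_{b,c}\mathcal{F}_{b,c}^*g_m^{c/b},g_n^{c/b}\rangle_{L^2(-1,1)}
=\frac{\rho_m^{c/b}/c}{\sigma_m^{b,c}\sigma_n^{b,c}}\,\delta_{mn}=\delta_{mn},
\end{equation*}
so the system is orthonormal. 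If $f\in L^2(\cosh(b\cdot))$ satisfies $\langle f,\varphi_m^{b,c}\rangle=0$ for every $m$, then $\langle\mathcal{F}_{b,c}f,g_m^{c/b}\rangle_{L^2(-1,1)}=0$ for every $m$, hence $\mathcal{F}_{b,c}f=0$ by the basis property of $(g_m^{c/b})$, and finally $f=0$ by injectivity. The main technical point is the injectivity step, which relies essentially on the Paley--Wiener-type analyticity already encoded in the choice of weight; the rest is bookkeeping with the SVD relations of Proposition~\ref{prop:Q}.
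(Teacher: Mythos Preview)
Your proof is correct and, for injectivity, proceeds exactly as the paper does (analyticity of $\mathcal{F}[f]$ via Theorem~IX.13 in \cite{ReedSimon}, then the identity principle). For the basis property the two arguments differ only in packaging: the paper observes that $\mathcal{Q}_{c/b}$ being trace class makes $\mathcal{F}_{b,c}^*$ Hilbert--Schmidt, hence $\mathcal{F}_{b,c}$ compact, and then invokes a black-box SVD result (Theorem~15.16 in \cite{Kress}) which, combined with injectivity, gives completeness of the left singular vectors; you instead unpack that black box by first showing $\mathcal{Q}_{c/b}$ is injective (so the $g_m^{c/b}$ span $L^2(-1,1)$), then checking orthonormality and completeness of the $\varphi_m^{b,c}$ by hand via the relation $\langle f,\varphi_m^{b,c}\rangle=(\sigma_m^{b,c})^{-1}\langle\mathcal{F}_{b,c}f,g_m^{c/b}\rangle$ and injectivity. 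Your route is a touch longer but more self-contained; the paper's is terser but leans on an external reference. Substantively they are the same argument.
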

\begin{proof}
%\noindent {\bf Proof.}
We use that, for every $h\in L^{2}(\cosh(b\cdot))$, if we do not restrict the argument in the definition of $\mathcal{F}_{b,c}[h]$ to $[-1,1]$, $\mathcal{F}_{b,c}[h]$ can be defined as a function in $L^2(\R)$. In what follows, for simplicity, we use $\mathcal{F}_{b,c}[h]$ for both the function in $L^2(-1,1)$ and in $L^2(\R)$.\\ 
Let us show that $\mathcal{F}_{b,c}$ defined in \eqref{eqdef1} is injective. Take $h\in L^{2}(\cosh(b\cdot))$ such that $\mathcal{F}_{b,c} [h]$ is zero on $[-1,1]$. 
%Denote by $M_{k}=\left(\int_{\R}t^{2k}\sech(b t) dt\right)^{1/2}$, we have 
%\begin{align*}
%M_k^{1/k}& \leq 
%%\left( 4 \int_{0}^{\infty} t^{2k} e^{-bt} dt  \right)^{1/(2k)}\\&= 
%\left( \dfrac{4 \Gamma(2k+1)}{b^{2k+1}} \right)^{1/(2k)}\\% \quad \text{(by definition of the Gamma function denoted by $\Gamma$)}\\
%&\leq \left( \dfrac{4 (2k+1)^{2k+1/2}}{e^{2k}b^{2k+1}} \right)^{1/(2k)} \quad \text{(by (1.3) in \cite{inq_gamma2})} \\
%&\leq  \dfrac{4^{1/(2k)} (2k+1)^{1+1/(4k)}}{eb^{1+1/(2k)}} 
%\end{align*}
%hence $ \sum_{k\in\N}M_k^{-1/k} =\infty $.
%Let us show that $\mathcal{F}_{b,c}$ defined in \eqref{eqdef1} is injective.\\ Take $h\in L^{2}(\cosh(b\cdot))$ such that $\mathcal{F}_{b,c} [h]$ is zero on $[-1,1]$. 
%$\mathcal{F}_{b,c} [h]$ belongs to $C^{\infty}(\R)$ by the Lebesgue dominated convergence theorem because, 
%for all $ (k,u)\in\N_0\times \R$,
%$$\int_{\R} \left| c^{	k}t^{k}  e^{ictu } h(t)\right| dt   \leq   c^{k}  \left\|h\right\|_{L^{2}(\cosh(b\cdot))}M_{k}.$$
%We obtain, for all $ (k,u)\in\N_0\times \R$,
%$$\left|\mathcal{F}_{b,c} [h]^{(k)}(u)\right|\le   c^{k}  \left\|h\right\|_{L^{2}(\cosh(b\cdot))}M_{k}.$$
%Theorem B.1 in \cite{DeJeu2} and the fact that, by the Cauchy-Schwarz inequality, for all $ k \in \N_0$,  $M_{k}\le M_{k-1}M_{k+1}$ yield that $\mathcal{F}_{b,c}[h]$ is zero on $\R$.
Then, using Theorem IX.13 in \cite{ReedSimon}, $\mathcal{F}_{b,c}[h]$ is zero on $\R$. Thus, $\mathcal{F}[h]$ hence $h$ are zero \emph{a.e.} on $\R$.\\
The second part of Proposition  \ref{sec:upper:extension} holds for the following reasons. 
Because $\mathcal{Q}_c$ is trace class, $\mathcal{F}_{b,c}^*$ is Hilbert-Schmidt hence compact (see, \emph{e.g.}, exercise 47 page 220 in \cite{ReedSimon}). Thus $\mathcal{F}_{b,c}$ is also compact  (see Theorem VI.12 in \cite{ReedSimon}). Then, the result holds by Theorem 15.16 in \cite{Kress} and the injectivity of $\mathcal{F}_{b,c}$.
\end{proof}
%\hfill$\square$\vspace{0.3cm}

%Many numerical techniques can be used, some of which do not rely explicitly on the SVD such as the Tikhonov and Landweber method, but to obtain error bounds it is important to have a series of estimates for the SVD which we prove in this paper.\\
%Because $\mathcal{F}\left[\sech(\pi c \cdot/2)\right](\star)=(2/c)\sech(\star/c)$, 
Theorem II in \cite{Widom} provides  the equivalent
\begin{equation}\label{eqWidom} 
\log\left(\rho_m^{c}\right) \underset{m\to\infty}{\sim}  - \pi m \dfrac{K(\text{sech}(\pi c))}{K(\tanh(\pi c))},
\end{equation}
where $K(r) = \int_{0}^{\pi/2}\left(1 - r^2\sin(x)^2 \right)^{-1/2}dx $ is the complete elliptic integral of the first kind. This paper provides nonasymptotic upper and lower bounds on the eigenvalues and upper bounds on the sup-norm of the functions $\left(g_m^{c}\right)_{m\in\N_0}$. This is important when, as in \cite{estimation}, one needs bounds on all the singular values for a (possibly diverging) range of $c$. In contrast, \eqref{eqWidom} is an equivalent for diverging $m$ and given $c$ on a logarithmic scale. Thus, it is silent on constants and preexponential factors and how they depend on $c$.  % we are unable to obtain a polynomial rate of convergence as sharp as in \eqref{eq:exp_rate}.}% in sections \ref{sec:low} and \ref{sec:upper}.  

The proofs of this paper sometimes rely on the following operator 
\begin{equation}\label{eqdefFpSWF} 
\begin{array}{cccc}
\mathcal{F}_c^{W_{[-1,1]}} : & L^{2}\left(W_{[-1,1]}\right) & \rightarrow &  L^2(-1,1),		  \\
			    & f 		& \rightarrow &   \mathcal{F}\left[  f \right](c\cdot)
\end{array} 
\end{equation}
where $W_{[-1,1]}=\indic\left\{[-1,1]\right\}+\infty\ \indic\left\{[-1,1]^c\right\}$, for which we use the notations 
$\rho_m^{W_{[-1,1]},t_m}$ for the $m^{\rm{th}}$ eigenvalue of $\mathcal{Q}_c^{W_{[-1,1]}}=c\mathcal{F}^{W_{[-1,1]}}_{c}\left(\mathcal{F}_{c}^{W_{[-1,1]}}\right)^*$. 
 %the operator %, instead of $\mathcal{F}_{b,c}$, is  based on the operator

\section{Lower bounds on the eigenvalues of $\mathcal{Q}_c$ and an application}% to analytic continuation}
\label{sec:low}
%Theorem \ref{prop:low1} below is close to Proposition 2.1 in \cite{bonami2016} (see also Proposition 4.1 in \cite{jaming2016approximation}) for the integral operator $\mathcal{Q}_c$ defined with $c \pi \sech(\pi c\cdot/2)$ instead of $ \text{sinc}(c \cdot)/\pi$. Section \ref{sec:analy} shows that the lower bound in this section is usefull to derive error bounds on approximation schemes for nonbandlimited functions. 

\subsection{Lower bounds on the eigenvalues of $\mathcal{Q}_c$}

%Let us start with a  lemma. \\%Theorems \ref{prop:low1} and \ref{prop:low2} give alternative bounds according to the values of $c$.\\

\begin{lemma}\label{it:decr_sigma}
For all $m\in\N_0$,  $ c\in (0,\infty) \mapsto \rho_{m}^{c} $ is nondecreasing.
\end{lemma}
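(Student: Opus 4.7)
The plan is to apply the Courant--Fischer min--max principle. Since $\mathcal{Q}_c$ is compact, self-adjoint, and positive (as noted after \eqref{eq:Qc}), one has
$$
\rho_m^c \;=\; \max_{\substack{V \subset L^2(-1,1)\\ \dim V = m+1}}\; \min_{\substack{h \in V\\ \|h\|_{L^2(-1,1)} = 1}} \braket{\mathcal{Q}_c h,\, h}_{L^2(-1,1)}.
$$
So it is enough to prove that, for each fixed $h \in L^2(-1,1)$, the map $c \mapsto \braket{\mathcal{Q}_c h,\, h}_{L^2(-1,1)}$ is nondecreasing on $(0,\infty)$: taking a supremum-of-infima of pointwise-nondecreasing functions preserves monotonicity.

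To prove the monotonicity of the quadratic form, I would derive an integral representation that isolates all of the $c$-dependence into a scalar weight independent of $h$. Starting from the Fourier identity $\mathcal{F}[\sech(c\cdot)](\star) = (\pi/c)\sech(\pi\star/(2c))$ already used to prove Proposition \ref{prop:Q}, Fourier inversion and a rescaling in the dual variable give
$$
\pi c\,\sech\!\left(\tfrac{\pi c}{2}(x-y)\right) \;=\; \int_{\R} e^{-i(x-y)\xi}\, \sech(\xi/c)\, d\xi.
$$
Substituting this into the definition of $\mathcal{Q}_c$ and swapping the order of integration (justified by absolute integrability, since $h \in L^1(-1,1)$ and $\int_{\R}\sech(\xi/c)\,d\xi = \pi c < \infty$) would yield
$$
\braket{\mathcal{Q}_c h,\, h}_{L^2(-1,1)} \;=\; \int_{\R} \sech(\xi/c)\,|H(\xi)|^2\, d\xi, \qquad H(\xi) := \int_{-1}^{1} e^{iy\xi} h(y)\, dy,
$$
where $|H|^2$ is independent of $c$.

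The conclusion is then immediate. For every fixed $\xi \in \R$, the map $c \mapsto \sech(\xi/c)$ is nondecreasing on $(0,\infty)$, because $\sech$ is even and strictly decreasing on $[0,\infty)$ while $|\xi|/c$ is nonincreasing in $c$. Hence $c \mapsto \braket{\mathcal{Q}_c h,\, h}$ is nondecreasing for every $h$, and min--max transfers this to each $\rho_m^c$. I do not anticipate any real obstacle: the only mildly technical point is the Fubini justification, which is routine. The structural content of the argument is simply that $\mathcal{Q}_c$ is a convolution-type operator whose Fourier symbol $\sech(\cdot/c)$ is pointwise nondecreasing in $c$.
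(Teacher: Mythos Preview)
Your proof is correct and follows essentially the same approach as the paper: both apply the min--max principle and reduce to the pointwise monotonicity of $c\mapsto\sech(\xi/c)$ after deriving the identical representation $\braket{\mathcal{Q}_c h,h}=\int_{\R}\sech(\xi/c)\,|\mathcal{F}[\mathcal{E}[h]](\xi)|^2\,d\xi$. The only cosmetic difference is that the paper obtains this formula by invoking the factorization $\mathcal{Q}_c=c\,\mathcal{F}_{1,c}\mathcal{F}_{1,c}^*$ from Proposition~\ref{prop:Q} and computing the adjoint, whereas you go straight through Fourier inversion of the kernel; both routes use the same underlying identity $\mathcal{F}[\sech(c\cdot)]=(\pi/c)\sech(\pi\cdot/(2c))$.
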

\begin{proof} 
%\noindent{\bf Proof.} 
Take $ m\in\N_0$. Using the maximin principle (see Theorem 5 page 212 in \cite{birman2012spectral}),  the $m+1$-st  eigenvalue $\rho_m^{c}$ satisfies 
%\blue{(Bonami met des sup et des inf avec lesquels je suis plus a l'aise. La reference met des min et des max mais ce que j'ai trouve date de 1986 et n'ai pas ta version que je n'ai pas vu sur dropbox, ils doivent avoir un argument ou bien en Russie on ne faisait pas la difference entre sup et max... page 9 tu as sup inf comme Bonami, il doit y avoir un argument dans le livre que tu cites pour avoir des min et des max. S'il n'y en a pas et que l'on faisait comme les russes on devrait rester coherents et utiliser le resultat avec min et les max partout)}
$$  \rho_m^{c} = \underset{V \in S_{m+1}}{\max} \underset{f \in V\setminus\{0\}}{\min}\dfrac{\braket{\mathcal{Q}_cf,f}_{L^2(-1,1)}}{\left\| f\right\|^2_{L^2(-1,1)}} ,$$
where $S_{m+1}$ is the set of $m+1$-dimensional vector subspaces of $L^2(-1,1)$.
Using \eqref{eq:adjoint} and Proposition \ref{prop:Q}, we obtain 
\begin{align}
 \notag \braket{\mathcal{Q}_cf,f}_{L^2(-1,1)}  
%&=  \braket{ \mathcal{F}^{I}\left(W^{-1}(\dfrac{\cdot}{c}\cdot) \right) \mathcal{F}\left( \text{Ext}\left[ f(\cdot)\right]\right)  ,f}_{L^2(-1,1)} \\
&=  c\braket{ \mathcal{F}_{1,c}\mathcal{F}_{1,c}^* [f] ,f}_{L^2(-1,1)}  \notag\\
&=  c\braket{   \mathcal{F}_{1,c}^*[f] ,\mathcal{F}_{1,c}^*[f]}_{L^2(\cosh)} \label{eq:Qc1}\\
&= c\left\|\sech \times \mathcal{F}_{1,c}\left[ \mathcal{E}\left[f\right] \right] \right\|_{L^2(\cosh)}^2 \notag\\
&=  c\int_{\R}\sech\left(x\right)\left|\int_{\R} e^{ictx} \mathcal{E}\left[f\right] (t)dt \right|^2dx \notag\\
&= \int_{\R} \sech\left(\dfrac{x}{c}\right)\left|\mathcal{F}\left[ \mathcal{E}\left[f\right]\right](x)\right|^2 dx\notag 
\end{align}
hence 
\begin{equation} \label{eq:Q}
\rho_m^{c} = \underset{V \in S_{m+1}}{\max} \underset{f \in V\setminus\{0\}}{\min}  \dfrac{2\pi\int_{\R} \sech\left(x/c\right)\left|\mathcal{F}\left[ \mathcal{E}\left[f\right]\right](x)\right|^2dx}{\left\|  \mathcal{F}\left[ \mathcal{E}\left[f\right]\right]\right\|^2_{L^2(\R)}}.
\end{equation}  
Then, using that $t\mapsto \cosh( t)$ is even, nondecreasing on $[0,\infty)$, and positive, we obtain that, for all $0<c_1\le c_2$ and $x\in\R$, 
$\sech\left(  x/c_2\right) \geq \sech\left(  x/c_1\right)$ hence that $ \rho_{m}^{c_1}\leq\rho_{m}^{c_2} $. 
%\hfill $\square$%\vspace{0.3cm} 
\end{proof}
\begin{theorem}\label{prop:low1}
%For all  $ c>0  $, we have $\rho_0^{c}    \geq   2\pi$.
For all $m\in \N_0$, we have
\begin{align}
\forall\ 0<  c \leq \frac{\pi}{4},\ \rho_m^{c}  &\geq \dfrac{2\sin(2c)^2}{ e^2c}   \exp\left( -2\log\left(\dfrac{7e^2\pi}{2c}\right)m   \right)\label{eq:low1_1}\\
\forall c>0,\ \rho_m^{c} &\ \geq \pi  \exp\left(-\df{\pi(m+1)}{2c}\right).\label{eq:low1_2}
\end{align}
\end{theorem}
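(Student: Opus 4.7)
The plan is to apply the Courant--Fischer maximin principle already obtained in the proof of Lemma~\ref{it:decr_sigma}, namely
\[
\rho_m^c \;=\; \max_{V\in S_{m+1}}\min_{f\in V\setminus\{0\}}\;\frac{2\pi\int_\R \sech(\xi/c)\,|\mathcal{F}[\mathcal{E}f](\xi)|^2\,d\xi}{\|\mathcal{F}[\mathcal{E}f]\|_{L^2(\R)}^2},
\]
and to construct, for each of the two regimes, an explicit $(m{+}1)$-dimensional subspace $V$ on which the Rayleigh quotient is bounded from below by the claimed quantity. Throughout, the elementary pointwise lower bound $\sech(u)\geq e^{-|u|}$ converts $\sech$-weighted integrals into truncated ones with exponential prefactors.

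For \eqref{eq:low1_2}, the idea is to choose $V$ so that $\mathcal{F}[\mathcal{E}f]$ is maximally concentrated on an interval $[-R,R]$. Concretely I would take $V$ to be the span of the first $m{+}1$ prolate spheroidal wave functions at spatial range $[-1,1]$ and bandwidth $R=\pi(m+1)/2$. By the standard PSWF identification, on this subspace $\int_{-R}^R|\mathcal{F}[\mathcal{E}f]|^2 \geq \lambda_m(R)\,\|\mathcal{F}[\mathcal{E}f]\|_{L^2(\R)}^2$ where $\lambda_m(R)$ is the $(m{+}1)$-st PSWF eigenvalue. Combining this with $\sech(\xi/c)\geq \sech(R/c)\,\indic_{[-R,R]}(\xi)\geq e^{-R/c}\,\indic_{[-R,R]}(\xi)$ and the plunge-region lower bound $\lambda_m(\pi(m+1)/2)\geq 1/2$ (the index $m$ lies just below the transition point $2R/\pi=m+1$, so this holds by Landau/Bonami--Karoui estimates), one obtains
\[
\rho_m^c \;\geq\; 2\pi\,\sech(R/c)\,\lambda_m(R) \;\geq\; 2\pi\cdot\tfrac12\cdot e^{-\pi(m+1)/(2c)} \;=\; \pi\,e^{-\pi(m+1)/(2c)}.
\]

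For \eqref{eq:low1_1}, the prefactor $\sin^2(2c)/c$ and the power $(2c/(7e^2\pi))^{2m}$ are polynomial in $c$ and cannot be recovered from the PSWF concentration estimate above, which only produces exponential-in-$(m+1)/c$ bounds. Here I would instead take $V$ to be the span of the exponentials $\{e^{ik\pi x}\indic_{[-1,1]}\}_{k=0}^m$, whose Fourier transforms $(-1)^k\,2\sin(\xi)/(\xi+k\pi)$ share the common numerator $\sin\xi$ and are mutually orthogonal in $L^2(\R)$ with norm $\sqrt{4\pi}$. The Rayleigh quotient then reduces to $\lambda_{\min}(M)/2$ where $M_{jk}=\int_\R \sech(\xi/c)\,4\sin^2(\xi)/((\xi+j\pi)(\xi+k\pi))\,d\xi$. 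The plan is to (i) localize the integrals to the window $[-2c,2c]$ where $\sech(\xi/c)$ is bounded away from $0$, (ii) replace $\sin^2\xi$ by the small-$c$ lower bound $\sin^2(2c)$ arising on this window, and then (iii) bound $\lambda_{\min}(M)$ from below by a Gershgorin-type diagonal dominance argument, the off-diagonal tails being controlled by the partial-fraction structure together with the constraint $c\leq \pi/4$. The explicit constants $e^{2}$ and $7e^2\pi$ should arise from the resulting tail estimates and the $(m{+}1)$ growth of the Gershgorin correction.

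The main obstacle is \eqref{eq:low1_1}: \eqref{eq:low1_2} is essentially a standard PSWF truncation once the plunge lower bound $\lambda_m(\pi(m+1)/2)\geq 1/2$ is invoked, whereas \eqref{eq:low1_1} requires simultaneously tracking the polynomial $c$-dependence of the prefactor, producing the base $2c/(7e^2\pi)$ to the power $2m$, and keeping the off-diagonal mass of $M$ small enough that diagonal dominance survives for every $m\in\N_0$. The restriction $c\leq \pi/4$ is precisely what keeps the exponentials, the sine, and the partial-fraction tails in the regime where these explicit bounds remain sharp enough.
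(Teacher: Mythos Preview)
Your treatment of \eqref{eq:low1_2} is correct and is precisely the paper's argument: truncate $\sech(\xi/c)\ge \sech(R/c)\indic_{[-R,R]}(\xi)$ with $R=\pi(m+1)/2$, recognize the resulting quotient as the PSWF eigenvalue $\rho_m^{W_{[-1,1]},R}$, and invoke the Bonami--Jaming--Karoui lower bound $\rho_m^{W_{[-1,1]},R}\ge\pi$ (your $\lambda_m(R)\ge 1/2$, up to the $2\pi$ normalization).

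Your plan for \eqref{eq:low1_1}, however, has a genuine gap. First, step~(ii) is in the wrong direction: on $[-2c,2c]$ one has $0\le\sin^2\xi\le\sin^2(2c)$, so replacing $\sin^2\xi$ by $\sin^2(2c)$ gives an upper, not a lower, bound on the quadratic form. More fundamentally, step~(iii) cannot deliver the $c^{2m}$ decay. After localization to $[-2c,2c]$ the Gram matrix $\tilde M_{jk}=\int_{-2c}^{2c}\hat f_j\overline{\hat f_k}$ has $\tilde M_{00}\asymp c$ and $\tilde M_{kk}\asymp c^3/k^2$ for $k\ge1$, with off-diagonal entries of the same order; a Gershgorin bound can therefore at best produce something of order $c^3$ (if it is positive at all), independently of $m$. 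The true smallest eigenvalue is of order $c^{2m+1}$ because a degree-$m$ trigonometric polynomial can place all its zeros inside the tiny window, and quantifying that concentration is exactly the content of the Tur\'an--Nazarov inequality $\|P(e^{i\cdot})\|_{L^2(I)}^2\ge (14e\pi/|I|)^{-2m}(|I|/\pi)\|P(e^{i\cdot})\|_{L^2(-\pi,\pi)}^2$. The paper's proof of \eqref{eq:low1_1} hinges on this inequality: after a rescaling that lands in $PW((m+1)/2)$, one chooses the subspace $V=\{\sum_{k=0}^m P_k e^{i(k-m/2)\cdot}\varphi(\cdot)\}$ with $\varphi(t)=\sin(t/2)/(\pi t)$, so that the denominator becomes $\|P(e^{i\cdot})\|_{L^2(-\pi,\pi)}^2/(2\pi)^2$ and the numerator is bounded below by a multiple of $\|P(e^{i\cdot})\|_{L^2(-2x_0,2x_0)}^2$; Nazarov's bound with $x_0=2c$ then yields the factor $(2c/(7e^2\pi))^{2m}$ and the prefactor $2\sin^2(2c)/(e^2 c)$. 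No elementary diagonal-dominance argument on a fixed exponential basis can substitute for this step.
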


\eqref{eq:low1_1} is valid for $0<c \leq\pi/4$ and more precise  than \eqref{eq:low1_2} for $c$ close to $0$. \eqref{eq:low1_2} is uniformly valid. To prove it, we show that  
$\rho_m^{c}  \geq 	\sech\left(t_m/c\right) \rho_m^{W_{[-1,1]},t_m}$ for well chosen $t_m$ and rely on a  lower bound on $\rho_m^{W_{[-1,1]},t_m}$. The proof of \eqref{eq:low1_1} uses similar arguments as those in \cite{bonami2018} Section 5.1 and a lower bound on the best constant $\Gamma(m,\epsilon)$ such that for all interval $I\subseteq[-\pi,\pi]$ of length $2\epsilon>0$ and all polynomial of degree at most $m\in\N_0$, 
\begin{align*}
		\left\| P\left(e^{i\cdot}\right)\right\|^2_{L^2(I)} \geq \Gamma(m,\epsilon) \left\| P\left(e^{i\cdot}\right)\right\|^2_{L^2(-\pi,\pi)}.
\end{align*}
We use the lower bound in \cite{nazarov2000complete} page 240
\begin{equation}
		\Gamma(m,\epsilon) \geq \left(\frac{14e\pi}{\epsilon} \right)^{-2m}\frac{\epsilon}{\pi}\label{eq:TN}
\end{equation}
for $\epsilon=4c$. 
It is argued in \cite{nazarov2000complete} that it cannot be significantly improved for small $\epsilon$ which is precisely the regime for which \eqref{eq:low1_1} is used to bound the eigenvalues from below. 
%In contrast, Section 5.1 in \cite{bonami2018} obtains an new upper bound on $\Gamma(m,\epsilon)$ using the upper bounds on the singular functions of the analogue of $\mathcal{Q}_c$ obtained from $\mathcal{F}_c$. We cannot rely on this result because we would need lower bounds on $\Gamma(m,\epsilon)$ rather than upper bounds. 
%\green{whereas the one of \eqref{eq:low1_2} is new.  To obtain \eqref{eq:low1_1} we use a lower bound on the Turan-Nazarov constant which is the best constant $\Gamma(m,4x_0)$ such that for all polynomial $P(x)=\sum_{k=0}^mP_kx^k$, $m\in \N_0$, and interval $I $ of length $0 < 4x_0 \leq 2\pi $,  we have
%
%		and in particular (see \cite{nazarov2000complete} page 240) that 
%		\begin{align}
%		\Gamma(m,4x_0) \geq \left(\frac{7e\pi}{x_0} \right)^{-2m}, \label{eq:TN}
%		\end{align}
%		whereas the argument of Section 5.1 in \cite{bonami2018} proves an new upper bound on $\Gamma(m,4x_0)$.} 
\begin{proof}
 Let $m\in\N_0$, $c>0$, and $M = (m+1)/(2c)$. 
For $R>0$, we denote by $PW(R)$ the Paley-Wiener space of functions whose Fourier transform has support in $[-R,R]$ and by $S_{m+1}(R)$ the set of $m+1$-dimensional subspaces of $PW(R)$. Using \eqref{eq:Q}, we have
$$  \rho_m^{c} = \underset{V \in S_{m+1}(1)}{\max} \underset{g \in V\setminus\{0\}}{\min} \dfrac{ 2\pi\int_{\R} \sech\left(x/c\right) \left|g(x)\right|^2 dx}{\left\|  g\right\|^2_{L^2(\R)}}  . $$
Then, for $g\in PW(1)$, 	the function $g_{Mc}: \ x\in \R \mapsto (Mc)^{1/2}g(Mcx)$ satisfies $\left\|  g\right\|^2_{L^2(\R)}=\left\|  g_{Mc}\right\|^2_{L^2(\R)}$ and belongs to $PW(Mc)$. 
% because, for \emph{a.e.} $t\in \R$,
%\begin{align*}
%\mathcal{F}\left[g_{Mc}\right](t) = & \int_{\R} (Mc)^{1/2} e^{itx} g(Mcx) dx \\
%%=&  \int_{\R} (Mc)^{-1/2} e^{itx/(Mc)} g(x) dx \\
%=&  (Mc)^{-1/2} \mathcal{F}\left[g\right]\left(\frac{t}{Mc} \right),
%\end{align*}
%and the support of $t \mapsto \mathcal{F}\left[g\right]\left(t/(Mc)\right)$ is in $[-1,1]$. 
Using
$$\int_{\R} \sech\left(\dfrac{ x}{c}\right) \left|g(x)\right|^2 dx = \int_{\R} \sech\left(Mx\right) \left|g_{Mc}(x)\right|^2 dx, $$
we have, for $V \in S_{m+1}(Mc)$,
\begin{equation}\label{lb} 
\rho_m^{c} \geq  \underset{g \in V\setminus\{0\}}{\min} \dfrac{2 \pi \int_{\R}\sech\left(Mx\right)\left| g(x)\right|^2 dx}{\left\|  g\right\|^2_{L^2(\R)}}. 
\end{equation}
Let us now choose a convenient such space $V$ defined, for $\varphi: t\in \R \mapsto \sin(t/2)/(\pi t)$, as $$V%_{m,\varphi} 
=\left\{
\sum_{k=0}^mP_ke^{i(k-m/2)\cdot}\varphi(\cdot), \quad (P_k)_{k=0}^m\in\C^{m+1}\right\}.$$ 
The Fourier transform of an element of $V$ is of the form  
$\sum_{k=0}^m P_k\mathcal{F}\left[ \varphi \right]\left(\cdot-k+m/2\right)$ 
and, because $\mathcal{F}\left[ \varphi \right](\cdot) = \indic\{|\cdot|\leq 1/2\}$, it has support in 
$[-1/2-m/2,1/2+m/2] = [-Mc,Mc]$. This guarantees that $V%_{m,\varphi} 
\in S_{m+1}(Mc)$.\\ 
 We now obtain a lower bound on the right-hand side of \eqref{lb}. Let $g\in V$, defined via the coefficients $(P_k)_{k=0}^m$, and, for $x\in\R$, let $P(x)=\sum_{k=0}^mP_kx^k$. 
Let $0<x_0\leq \pi/2$. We have, using  $\forall x\in[0,2x_0)$, $\sin(x/2)/x\ge  \sin(x_0)/(2x_0)$ for the last display, 
%\blue{(a la deuxieme ligne on peut sommer sur tous les intervalles $+2k\pi$ cela donne la somme d'une serie geometrique mais ne change peut etre rien)}
\begin{align*}
 \int_{\R} \sech\left( Mx\right) \left|g(x)\right|^2 dx 
 %&= \int_{\R}\sech\left(Mx\right) \left| \sum_{k=0}^m P_k e^{ikx} \right|^2 \left| \varphi(x) \right|^2     dx \\
&\ge \int_{-2x_0}^{2x_0}\sech\left(Mx\right) \left| \sum_{k=0}^m P_k e^{ikx} \right|^2 \left| \varphi(x) \right|^2     dx \\
& \geq  \dfrac{1}{\cosh( 2Mx_0)} \min_{x \in [-2x_0, 2x_0]}\left|\varphi(x) \right|^2 \int_{-2x_0}^{2x_0} \left| \sum_{k=0}^m P_k e^{ikx} \right|^2  dx\\
%& \geq \dfrac{2}{\pi} e^{- M\alpha  c} \left( \dfrac{1}{\alpha c} \right)^2 \left(\sin\left(\dfrac{\alpha c}{2} \right)\right)^2 \left\| P\left(e^{i\cdot}\right)\right\|^2_{L^2(-\alpha c,\alpha c)}\\
& \geq \dfrac{\sin(x_0)^2}{(2\pi x_0)^2} e^{- 2Mx_0}  \left\| P\left(e^{i\cdot}\right)\right\|^2_{L^2(-2x_0,2x_0)}.
\end{align*}
%$$\min_{x \in [-\alpha c, \alpha c]}\left|\varphi(x) \right| =  \sqrt{ \dfrac{2}{\pi}}\dfrac{1}{\alpha c} \sin\left(\dfrac{\alpha c}{2} \right) $$ 
Now, using that, for $k \in \N_0$, $t\mapsto \mathcal{F}\left[ \varphi \right](t-k+m/2)$ have disjoint supports,  we obtain 
\begin{align*}
\left\|g \right\|^2_{L^2(\R)} &=  \dfrac{1}{2\pi} \left\|\mathcal{F}\left[g\right] \right\|^2_{L^2(\R)}\\
&= \dfrac{1}{2\pi}\sum_{k=0}^m \left|P_k \right|^2 \left\| \mathcal{F}\left[\varphi\right]\right\|^2_{L^2(\R)}   \\
%&=  \dfrac{1}{(2\pi)^2} \left\| P\left(e^{i\cdot}\right)\right\|^2_{L^2(-\pi,\pi)} \left\| \mathcal{F}\left[\varphi\right]\right\|^2_{L^2(\R)}\\
&=  \dfrac{1}{(2\pi)^2} \left\| P\left(e^{i\cdot}\right)\right\|^2_{L^2(-\pi,\pi)} ,
\end{align*} 
hence, by \eqref{eq:TN},
\begin{align*}\rho_m^{c} 
%&\ge  2\pi\dfrac{\sin(x_0)^2}{ x_0^2} e^{- 2Mx_0}  \frac{\left\| P\left(e^{i\cdot}\right)\right\|^2_{L^2(-2x_0,2x_0)}}{\left\| P\left(e^{i\cdot}\right)\right\|^2_{L^2(-\pi,\pi		)}}\\
&\ge    \dfrac{4\sin(x_0)^2}{ x_0} e^{- 2Mx_0} \left(\dfrac{7e\pi }{x_0} \right)^{-2m}.
\end{align*}
We obtain, for $ 0< x_0 \leq \pi/2 $ and $m\in \N_0$,
\begin{align}\notag
\rho_m^{c} & \geq \dfrac{4 \sin(x_0)^2}{ x_0}   \exp\left( -\dfrac{ x_0}{c}(m+1) - 2\log\left(\dfrac{7e\pi}{x_0}\right)m    \right).
%& \geq \dfrac{8}{\pi}   \exp\left( -\dfrac{ x_0}{c}(m+1) - 2\log\left(\dfrac{e}{x_0}\right)m    \right) \left(\text{using that}, \ \text{for} \ 0 < x_0 \leq  \frac{\pi}{2}, \ \frac{\sin(x_0)}{ x_0}\geq \frac{2}{\pi}\right).\label{eq:rho19}
\end{align}
Thus, we have, for all $m\in\N_0$, 
\begin{align}\label{form}
\rho_m^{c} 
%& \geq 2\pi e^{-2m}\sup_{x_0\in(0,\pi)}\left(\dfrac{\sin(x_0)}{ x_0}\right)^2\exp\left( -\left(\dfrac{ x_0(m+1)}{c} - 2\log\left(x_0\right)m\right)   \right)\\
& \ge 4 e^{-2\log(7e\pi)m}	\sup_{x_0\in(0,\pi/2]} \dfrac{\sin(x_0)^2}{ x_0}e^{-x_0/c}\exp\left( -\left(\dfrac{ x_0}{c} - 2\log\left(x_0\right)\right)m   \right).%\notag
\end{align}
Using that if $2c <\pi$,  $x_0\mapsto x_0/c - 2\log\left(x_0\right)$ admits a minimum at $x_0 = 2c$, we obtain, for all $ 0< c \leq \pi/4$,
\begin{align*}\notag
\rho_m^{c} & \geq \dfrac{2\sin(2c)^2}{ e^2c}   \exp\left( -2\log\left(\dfrac{7e^2\pi}{2c}\right)m   \right).
\end{align*}
%which yields the result.\\
\noindent We now prove the second bound on $\rho_m^{c}$.  %From \eqref{eq:rho19}, taking $x_0 = \pi/2$ yields, for all $c>0$,
%\begin{align}\notag
%\rho_m^{c} & \geq \dfrac{8}{ \pi}\exp\left( -\dfrac{\pi (m+1)}{2c}   - 2 	\log\left(\dfrac{2e}{\pi}\right)m\right).
%\end{align}
Let $m\in\N_0$ and  $ t_m =\pi(m+1)/2 $. For all $ x \in \R $, we have $ \sech\left( x/c \right)\geq \sech\left(t_m/c\right) \indic\{\abs{x}\leq t_m \} $, hence, by \eqref{eq:Q}, we have %\blue{(en dessous tu ecris $\mathcal{V}_{m+1}$ veux tu ecrire $S_{m+1}$?)}
\begin{align}
 \rho_m^{c} & =  \underset{V \in S_{m+1}}{\max} \underset{f \in V\setminus\{0\}}{\min}   \int_{\R} \sech\left(\dfrac{ x}{c}\right)\left|\mathcal{F}\left[ \mathcal{E}\left[f\right]\right](x)\right|^2 dx \dfrac{1}{\|f\|_{L^2(-1,1)}^2}\notag\\
 %& \geq \underset{V \in \mathcal{V}_{m+1}}{\sup} \underset{f \in V\setminus\{0\}}{\inf}  \int_{\R} \sech\left(\dfrac{ t_m}{c}\right) \indic\{\abs{x}\leq t_m \}  \left|\mathcal{F}\left[ \mathcal{E}\left[f\right]\right](x)\right|^2 dx \dfrac{1}{\|f\|_{L^2(-1,1)}^2} \notag\\
 & \geq  \sech\left(\dfrac{t_m}{c}\right) \underset{V \in S_{m+1}}{\max} \underset{f \in V\setminus\{0\}}{\min}  \int_{\R} \indic\{\abs{x}\leq t_m \}  \left|\mathcal{F}\left[ \mathcal{E}\left[f\right]\right](x)\right|^2 dx \dfrac{1}{\|f\|_{L^2(-1,1)}^2} \notag\\
  & \geq 	\sech\left(\dfrac{t_m}{c}\right) \rho_m^{W_{[-1,1]},t_m}.\label{eq:lastW}
\end{align} 
%where $\rho_m^{W_{[-1,1]},t_m}$ is the $m^{\rm{th}}$ eigenvalue of $\mathcal{Q}_c^{W_{[-1,1]}}=c\mathcal{F}^{W_{[-1,1]}}_{c}\left(\mathcal{F}_{c}^{W_{[-1,1]}}\right)^*$. %(see, \emph{e.g.}, \cite{estimation})
Using that $m = 2t_m/\pi -1$ and (5.2) in \cite{bonami2018} %and the first inequality in Lemma B.5 in \cite{estimation}
 (with a difference by a factor $1/(2\pi)$ in the normalisation of $\mathcal{Q}_c^{W_{[-1,1]}}$), we have  $\rho_m^{W_{[-1,1]},t_m}  \geq \pi$ hence, for all $ m \in \N_0  $, 
%\blue{Pourquoi ne pas prendre $t_m=\pi m/2$? Il me semble que le $m+1$ ne sert a rien}
\begin{align*}
 \rho_m^{c} & \geq \exp\left(-\df{t_m}{c}\right)  \rho_m^{W_{[-1,1]},t_m} \quad (\text{by} \ \eqref{eq:lastW}) \notag \\ 
 & \geq \pi \exp\left(-\df{\pi(m+1)}{2c}\right). % \quad \left(\text{using} \ t_m =\df{\pi(m+1)}{2} \right) . 
\end{align*}
%\blue{(je ne reconnais pas le lemme B.5, la preuve utilisait Turan Nazarov, etait-elle correcte?).}
\end{proof}
%\hfill$\square$\vspace{0.3cm}

%Some proof techniques are from the proof of Proposition 2.1 in \cite{bonami2016} for the PSWF. 
The best lower bound in terms of the factor in the exponential is \eqref{eq:low1_1} for $c\le c_0$, where $c_0=0.12059$, %(a numerical approximation)
and \eqref{eq:low1_2} for larger $c$ (see  Figure \ref{fig:bounds}). This yields 
\begin{corollary}\label{cor:X} For all $c>0$, 
\begin{align}
\forall m\in\N_0,\ &\rho_m^{c}\geq\theta(c) e^{ -2\beta(c) m}\label{eq:theta_beta},
\end{align}
where
\begin{align*}
\beta:&\ c \mapsto  \log\left(\dfrac{7e^2\pi}{2c}\right) \indic\left\{ c\leq c_0 \right\} +   \dfrac{\pi}{4c} \indic\left\{ c > c_0 \right\},\\
\theta:& \ c \mapsto \dfrac{2 \sin(2c)^2}{e^2c} \indic\left\{ c \leq  c_0\right\} + \dfrac{\pi}{ e^{	\pi/(2c)}}\indic\left\{ c > c_0\right\} .
\end{align*}
Clearly, because $c_0\le\pi/4$ and  $x\mapsto \sin(x)/x$ is decreasing on $(0,\pi/2]$, the lower bound holds when we replace $\theta$ by 
$$\widetilde{\theta}:\ c \mapsto \dfrac{2\sin(2c_0)^2c}{(ec_0)^2}  \indic\left\{ c \leq  c_0\right\} + \dfrac{\pi}{ e^{	\pi/(2c)}}\indic\left\{ c > c_0 \right\}.$$ 
\end{corollary}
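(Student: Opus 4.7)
The plan is to combine the two lower bounds from Theorem \ref{prop:low1} by selecting, for each $c>0$, the one whose exponential rate in $m$ is smaller (and hence gives the better bound for large $m$). Write the first bound \eqref{eq:low1_1} as $\rho_m^c \geq A_1(c) e^{-2\beta_1(c) m}$ with $\beta_1(c) = \log(7e^2\pi/(2c))$ (valid on $c\in(0,\pi/4]$) and the second bound \eqref{eq:low1_2} as $\rho_m^c \geq A_2(c) e^{-2\beta_2(c) m}$ with $\beta_2(c) = \pi/(4c)$ (valid on $(0,\infty)$), where in the latter I absorb the factor $e^{-\pi/(2c)}$ into the prefactor so that $A_2(c) = \pi e^{-\pi/(2c)}$.

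Next I would locate the crossover $c_0$ by solving $\beta_1(c_0) = \beta_2(c_0)$, i.e.\ $\log(7e^2\pi/(2c_0)) = \pi/(4c_0)$. A short numerical check gives $c_0 = 0.12059$, and since $\beta_1$ is slowly varying (logarithmic) while $\beta_2$ blows up like $1/c$ as $c\downarrow 0$, we have $\beta_1(c) \le \beta_2(c)$ for $c\in(0,c_0]$ and $\beta_2(c) \le \beta_1(c)$ for $c\in(c_0,\pi/4]$. In particular $c_0 \le \pi/4$, so \eqref{eq:low1_1} is applicable on $(0,c_0]$, and this yields \eqref{eq:theta_beta} with the stated piecewise $\beta$ and $\theta$ simply by choosing the better of the two bounds in each regime.

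For the last claim with $\widetilde\theta$, the only modification is on $(0,c_0]$ where I replace $\theta(c) = 2\sin(2c)^2/(e^2 c)$ by the simpler linear-in-$c$ lower bound $\widetilde\theta(c) = 2\sin(2c_0)^2 c/(e c_0)^2$. Since $x \mapsto \sin(x)/x$ is decreasing on $(0,\pi/2]$ and $2c \le 2c_0 \le \pi/2$, we have $\sin(2c)/(2c) \ge \sin(2c_0)/(2c_0)$, so $\sin(2c)^2 \ge \sin(2c_0)^2\, c^2/c_0^2$, which after dividing by $c$ gives $\theta(c) \ge \widetilde\theta(c)$ on $(0,c_0]$. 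On $(c_0,\infty)$ the two functions coincide, so the lower bound \eqref{eq:theta_beta} remains valid with $\widetilde\theta$ in place of $\theta$.

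I do not expect a real obstacle here: the argument is essentially a case split on which of the two previously proved bounds is tighter, together with an elementary monotonicity argument for the last simplification. The only slightly delicate point is the numerical identification of $c_0$ as the unique root of the transcendental equation $\log(7e^2\pi/(2c)) = \pi/(4c)$, but uniqueness follows immediately from the opposite monotonicities of the two sides on $(0,\pi/4]$.
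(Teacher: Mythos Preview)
Your proposal is correct and follows exactly the approach indicated in the paper, which simply remarks that $c_0$ is the crossover point between the two exponential rates of Theorem~\ref{prop:low1} and then reads off the piecewise definitions. One small slip: both $c\mapsto\log(7e^2\pi/(2c))$ and $c\mapsto\pi/(4c)$ are decreasing on $(0,\pi/4]$, so your ``opposite monotonicities'' justification for uniqueness of $c_0$ is not right; instead observe that their difference $h(c)=\pi/(4c)-\log(7e^2\pi/(2c))$ has $h'(c)=(4c-\pi)/(4c^2)<0$ on $(0,\pi/4)$, hence is strictly monotone there, which gives the unique root.
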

%\begin{corollary}\label{cor:thm1}
%For all $m\in \N_0$ and $ c>0  $, we have
%$$\rho_m^{c}    \geq   \theta(c) e^{ -2\beta(c) m } ,	 $$
%where 
%\begin{align*}
%\beta:&\ c \mapsto  \left( 2 - \log(2) + \log\left(\dfrac{2}{c \wedge(\pi/4)}\right)\right) \indic\left\{ c \leq \dfrac{\pi}{4} \right\} +   \dfrac{\pi}{4c} \indic\left\{ c > \dfrac{\pi}{4} \right\},\\
%\theta:& \ c \mapsto \dfrac{8}{\pi}\indic\left\{ c \leq \dfrac{\pi}{4} \right\} + \dfrac{\pi}{ e^{	\pi/(2c)}}\indic\left\{ c > \dfrac{\pi}{4} \right\} .
%\end{align*}
%\end{corollary}
%\noindent{\bf Proof.} Using \eqref{eq:low1_1} and for all $ x \in [0,\pi/2]$, $\sin(x)/x \geq 2/\pi$,  we have, for $m\in \N$ and $ 0< c\le \pi/4  $, 
%\begin{equation*}
%\rho_m^{c}    \geq  \dfrac{8}{\pi} \exp\left( -2m \left( 1+ \log\left( \dfrac{e}{c}  \right)\right)\right),
%\end{equation*}
%hence, using Lemma \ref{it:decr_sigma}, for all $c >0$,
%\begin{equation}
%\rho_m^{c}    \geq  \dfrac{8}{\pi}   \exp\left( -2\left( 2  - \log\left(2\right) + \log\left(\dfrac{2}{c\wedge (\pi/4)}\right)\right) m     \right).
%\end{equation}
%The results then follows directly from Theorem \ref{prop:low1}.
%\hfill$\square$

\subsection{Application: Error bounds for stable analytic continuation of functions whose Fourier transform belongs to $L^2(\cosh(b\cdot))$}\label{sec:analy} In this section, we consider the problem where 
we observe the function $f$ with error on $(x_0-c,x_0+c)$, for $c>0$ and $x_0 \in \R$, 
\begin{equation}\label{eq:f}
f_{\delta}(cx + x_0) = f(cx + x_0) + \delta \xi(x),  \quad \text{for \textit{a.e.}}  \ 	x\in (-1,1),  \quad \mathcal{F}\left[f\right]\in L^2(\cosh(b \cdot)),
	\end{equation}
where $\xi \in L^2(-1,1)$, $\|\xi\|_{L^{2}(-1,1)} \leq 1$, and $\delta >0$. % is the noise level.
We consider the problem of approximating $f_0=f$ on $L^2(\R)$ from $f_\delta$ on $(x_0-c,x_0+c)$. This is a classical problem for which an approach based on PSWF is prone to criticism when the researcher does not have a priori information on the bandlimits or questions the bandlimited assumption. As we have stressed before such an assumption makes little sense for probability densities. 

Noting that, %We decompose the problem \eqref{eq:f} as a composition of operators, 
for a.e. 	$x\in (-1,1)$,
\begin{equation}\label{eq:f1}
  \dfrac{1}{2\pi}\mathcal{F}_{b,c}\left[ \mathcal{F}\left[f(x_0- \cdot)\right]\right](x) = f(cx + x_0),
\end{equation}
we have, $\forall b,c>0$, 
\begin{equation}\label{stepping}
\boxed{\quad\forall f:\ \mathcal{F}\left[f\right]\in L^2(\cosh(b \cdot)),\  f=2\pi \mathcal{F}^{I}\left[ \sum_{m\in \N_0}
\frac{1}{\sigma_{m}^{b,c}}\left\langle f(c\cdot + x_0),g_{m}^{ c/b}(\cdot)\right\rangle_{L^2(-1,1)}\varphi_{{m}}^{b,c}\right].}
\end{equation}

It is classical in inverse problems that \eqref{stepping} cannot be used. Moreover, the problem is severely ill-posed and the observations are contaminated with error. In order to stabilize the inversion, we use a truncated series. 
This suggests the two steps regularising procedure:
\begin{enumerate}
\item approximate $ \mathcal{F}\left[f(x_0- \cdot)\right]/(2\pi) \in L^2(\cosh(b \cdot)) $ by the spectral cut-off regularization,
\begin{equation}\label{eq:ft1}  F^{N}_{\delta} =  \sum_{m \leq N}\frac{1}{\sigma_{m}^{b,c}}\left\langle f_{\delta}(c\cdot + x_0),g_{m}^{ c/b}(\cdot)\right\rangle_{L^2(-1,1)}\varphi_{{m}}^{b,c},
\end{equation}
\item take the inverse Fourier transform and define 
\begin{equation}\label{eq:ft2}
f^{N}_{\delta}(\cdot) =2\pi \mathcal{F}^{I}\left[ F^{N}_{\delta}\right](x_0- \cdot).
\end{equation}
\end{enumerate}
These steps require numerical approximations of an inner product, of an inverse Fourier transform over $\R$, and of the singular functions. %In practice this is achieved by quadrature methods and the fast Fourier transform.  
Sections \ref{sec:nmerical} and \ref{sec:sim} address these issues. %summarizes the main numerical elements to obtain the SVD and the approximation of the scalar products in \eqref{eq:ft1}. %\blue{(donner plus de details ici ou dans la partie numerique si on retombe sur le referee)}
%The Tikhonov regularisation consists in taking the approximation $F^{\delta}_{\alpha}$ as the unique solution of the following variational formulation 
%$$ \min_{F \in L^2_{\cosh(b \cdot)}(\R)	 } \left\| \mathcal{F}_{b,c}\left[ F \right] - h^{\delta}  \right\|_{L^2(-1,1)}^2 + \alpha \left\|F\right\|^2_{L^2_{\cosh(b \cdot)}}  .$$ 	

The lower bounds %on the eigenvalues of $\mathcal{Q}_{c/b}$ 
of Theorem \ref{prop:low1} 
are useful to obtain rates of convergence when 
%functions 
%% of such a regularisation scheme are difficult to obtain without knowledge of a l
%To derive rates of convergence we use the common assumption that 
$\mathcal{F}[f]$, which appears on the left-hand side of \eqref{eq:f1}, satisfies a source condition: $f\in\mathcal{H}_{\omega, x_0}^{b,c}(M)$, where 
\begin{align*}
\mathcal{H}_{\omega, x_0}^{b,c}(M) = \left\{ f: \quad 			\sum_{m\in \N_0} \omega_m^2	 \left|\left< \mathcal{F}[f(x_0 - \cdot)],\varphi_m^{b,c}\right>_{L^2(\cosh(b\cdot))}\right|^2\leq M^2 \right\}
\end{align*}
for a given sequence $\left(\omega_m\right)_{m\in\N_0}$. The set can also be written as 
\begin{align*}
\mathcal{H}_{\omega, x_0}^{b,c}(M) = \left\{ f: \quad 			\sum_{m\in \N_0} \left(2\pi\frac{\omega_m}{\sigma_m^{b,c}}\right)^2 \left|\left<f(c\cdot +x_0),g_m^{b,c}\right>_{L^2(-1,1)}\right|^2\leq  M^2 \right\}.
\end{align*}
This amounts to smoothness of $f(c\cdot +x_0)$ on $(-1,1)$. When $\omega_m=1$ for all $m$ this corresponds to analyticity of $f$ in $\R$. 
 %The ratio involving $\sigma_m^{b,c}$ in the denominator imposes a minimal strong smoothness assumption. 
 We consider below cases where we have a preexponential polynomial and exponential sequence $\omega_m$. Theorem 1 in \cite{bonami2017approximations} provides a comparison between the smoothness in terms of a summability condition involving the coefficients on the PSWF basis and Sobolev smoothness on $(-1,1)$. Such a result is not available when the PSWF basis is replaced by $(g_m^{b,c})_{m\in\N_0}$ and requires further investigation.                                                      

%When $x_0=0$, this is a source condition on $\mathcal{F}[f]$
%\green{The condition in $\mathcal{H}_{\omega, x_0}^{b,c}(M)$ defines a notion of smoothness for the Fourier transform of $f$.  Though the choice of this space is considered for mathematical convenience, we can analyse all type of smoothness.} 
%Equation \eqref{eq:source} defines a notion of smoothness for the Fourier transform of $f$. Though the choice of this space is considered for mathematical convenience, we can analyse all type of smoothness. 
%Denote by $\beta_2: \ c\mapsto   \beta(c)\indic\{ c \leq \pi /4\} + \beta_1(c)\indic\{ c >\pi /4\} $, where $\beta(c)$, $\beta_1(c)$ are from Theorem \ref{prop:low1}.
\begin{theorem}\label{th:analytic}
Take $M>0$ and define $\beta$ as in \eqref{eq:theta_beta}, then we have
\begin{enumerate}
\item\label{it:1} for $(\omega_m)_{m\in \N_0} = (m^{\sigma})_{m\in\N_0}$, $\sigma > 1/2 $, $N =\left\lfloor  \overline{N} \right\rfloor$, and $\overline{N} = \ln(1/\delta)/(2\beta(c/b))$, 
\begin{equation}\label{eq:log_rate}
\sup_{f \in \mathcal{H}_{\omega, x_0}^{b,c}(M) ,\|\xi\|_{L^2(-1,1)}\leq 1}\left\| f^{N}_{\delta} - f\right\|_{L^2(\R)} = \underset{\delta \to 0}{O}((-\log(\delta))^{-\sigma}),
\end{equation}
\item\label{it:2} for $(\omega_m)_{m\in \N_0} = (e^{\kappa m})_{m\in\N_0}$, $\kappa > 0$, $N =\left\lfloor  \overline{N} \right\rfloor$, and $\overline{N} = \ln(1/\delta)/(\kappa + \beta(c/b))$,  
\begin{equation}\label{eq:exp_rate}
\sup_{f \in \mathcal{H}_{\omega, x_0}^{b,c}(M) ,\|\xi\|_{L^2(-1,1)}\leq 1}\left\| f^{N}_{\delta}- f\right\|_{L^2(\R)} =	 \underset{\delta \to 0}{O}\left(\delta^{ \kappa /\left(\kappa+\beta(c/b)\right) }\right) .	
\end{equation}
\end{enumerate}
\end{theorem}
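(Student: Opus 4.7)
The plan is a classical bias--variance analysis for a truncated SVD inversion, where the lower bound on the singular values from Corollary \ref{cor:X} controls the variance and the source condition controls the bias. The first step is to transfer the $L^2(\R)$ error to an error on the Fourier side. Setting $G = \mathcal{F}[f(x_0-\cdot)]/(2\pi) \in L^2(\cosh(b\cdot))$, the exact reconstruction formula \eqref{stepping} gives $f(x_0-\cdot) = 2\pi\mathcal{F}^I[G]$, while by \eqref{eq:ft2} we have $f_\delta^N(x_0-\cdot) = 2\pi\mathcal{F}^I[F_\delta^N]$. Since the translation $y\mapsto x_0-y$ is an isometry of $L^2(\R)$, Plancherel and the elementary bound $\cosh(b\cdot)\ge 1$ yield
\begin{equation*}
\|f_\delta^N-f\|_{L^2(\R)}^2 = (2\pi)^2 \|\mathcal{F}^I[F_\delta^N-G]\|_{L^2(\R)}^2 = 2\pi\|F_\delta^N-G\|_{L^2(\R)}^2 \le 2\pi\|F_\delta^N-G\|_{L^2(\cosh(b\cdot))}^2.
\end{equation*}

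Next I would expand in the orthonormal basis $(\varphi_m^{b,c})_{m\in\N_0}$ of $L^2(\cosh(b\cdot))$ given by Proposition \ref{sec:upper:extension}. Writing $\gamma_m = \langle f(c\cdot+x_0), g_m^{c/b}\rangle_{L^2(-1,1)}$ and $\eta_m=\langle\xi,g_m^{c/b}\rangle_{L^2(-1,1)}$, the SVD gives $G = \sum_m (\gamma_m/\sigma_m^{b,c})\varphi_m^{b,c}$ and $F_\delta^N = \sum_{m\le N}((\gamma_m+\delta\eta_m)/\sigma_m^{b,c})\varphi_m^{b,c}$, so that
\begin{equation*}
\|F_\delta^N-G\|_{L^2(\cosh(b\cdot))}^2 = \delta^2\sum_{m\le N}\frac{|\eta_m|^2}{(\sigma_m^{b,c})^2} + \sum_{m>N}\frac{|\gamma_m|^2}{(\sigma_m^{b,c})^2}.
\end{equation*}
Since $(g_m^{c/b})$ is orthonormal and $\|\xi\|_{L^2(-1,1)}\le 1$, Bessel's inequality bounds the first (variance) sum by $\delta^2/(\sigma_N^{b,c})^2$. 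For the second (bias) sum, the second characterization of $\mathcal{H}_{\omega,x_0}^{b,c}(M)$ gives $\sum_m (2\pi\omega_m/\sigma_m^{b,c})^2|\gamma_m|^2\le M^2$, so monotonicity of $(\omega_m)$ yields the bias bound $M^2/((2\pi)^2\omega_N^2)$.

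Combining, for any $N\in\N_0$,
\begin{equation*}
\|f_\delta^N-f\|_{L^2(\R)}^2 \le \frac{2\pi\,\delta^2}{(\sigma_N^{b,c})^2} + \frac{M^2}{2\pi\,\omega_N^2}.
\end{equation*}
Using $(\sigma_N^{b,c})^2 = \rho_N^{c/b}/c$ together with Corollary \ref{cor:X}, I would bound $1/(\sigma_N^{b,c})^2 \le c\,e^{2\beta(c/b)N}/\widetilde\theta(c/b)$, so that the right-hand side becomes, up to constants depending only on $b,c,M$,
\begin{equation*}
\delta^2\,e^{2\beta(c/b)N} + \omega_N^{-2}.
\end{equation*}

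It only remains to balance these two terms. In case \eqref{it:1}, with $\omega_N = N^\sigma$, the choice $N = \lfloor\log(1/\delta)/(2\beta(c/b))\rfloor$ makes the variance of order $\delta$, which is dominated by the bias of order $(-\log\delta)^{-2\sigma}$, yielding \eqref{eq:log_rate}. In case \eqref{it:2}, with $\omega_N = e^{\kappa N}$, balancing $\delta^2 e^{2\beta N} \asymp e^{-2\kappa N}$ gives $N = \lfloor\log(1/\delta)/(\kappa+\beta(c/b))\rfloor$ and the common order $\delta^{2\kappa/(\kappa+\beta(c/b))}$, giving \eqref{eq:exp_rate}. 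The main care point in the whole argument is the bookkeeping in the first paragraph (the Plancherel normalization and the passage from $L^2(\R)$ to the weighted space); everything downstream is the standard spectral cut-off calculus once the eigenvalue lower bound of Corollary \ref{cor:X} is available.
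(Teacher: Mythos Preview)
Your proof is correct and follows essentially the same bias--variance scheme as the paper: pass to the Fourier side, control the variance via the eigenvalue lower bound of Corollary~\ref{cor:X}, control the bias via the source condition, and balance. Your version is in fact slightly cleaner than the paper's, since you exploit the orthogonality of the two error pieces (Pythagoras instead of the triangle inequality) and bound the variance by the single worst term $1/(\sigma_N^{b,c})^2$ rather than summing the geometric series $\sum_{m\le N} c/\rho_m^{c/b}$; this changes only constants, not the rates.
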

% Theorem \ref{th:analytic} below can be extended to more general regularisation schemes (see, \emph{e.g.},\cite{Hohage,Engl}). \\
%\noindent {\bf Proof.} 
\begin{proof}
We have, using the Plancherel equality for the first equality, 
\begin{align}
\left\| f^{N}_{\delta} - f\right\|_{L^2(\R)}^2 =& \dfrac{1}{2\pi} \left\| \mathcal{F}\left[f^{N}_{\delta}\right] -\mathcal{F}\left[ f\right]\right\|_{L^2(\R)}^2 \notag  \\
 =& \dfrac{1}{2\pi} \left\| \mathcal{F}\left[f^{N}_{\delta}(x_0-\cdot)\right] -\mathcal{F}\left[ f(x_0-\cdot)\right]\right\|_{L^2(\R)}^2 \notag  \\
 \leq  & \dfrac{1}{2\pi} \left\| \mathcal{F}\left[f^{N}_{\delta}(x_0-\cdot)\right] -\mathcal{F}\left[ f(x_0-\cdot)\right]\right\|_{L^2(\cosh(b\cdot))}^2 \notag \\
 \leq &  \dfrac{1}{\pi}  \left\| \mathcal{F}\left[f^{N}_{\delta}(x_0-\cdot)\right] -\mathcal{F}\left[ f^{N}_{0}(x_0-\cdot)\right]\right\|_{L^2(\cosh(b\cdot))}^2 
 \notag \\
& +  \dfrac{1}{\pi} \left\| \mathcal{F}\left[f^{N}_{0}(x_0-\cdot)\right] -\mathcal{F}\left[ f(x_0-\cdot)	\right]\right\|_{L^2(\cosh(b\cdot))}^2 . \label{eq:cent}
\end{align}
%Denote by $\theta(c): \ c\mapsto  (\pi/8)\indic\{ c \leq \pi /4\} +  (\exp(\pi/2c)/\pi)\indic\{ c >\pi /4\} $. 
Using \eqref{eq:ft1} for the first equality, the Cauchy-Schwarz inequality and \eqref{esigmarho} for the first inequality, and \eqref{eq:theta_beta} 
%and $ \inf_{m\in \N_0} \theta(c,m) = 
% 2\pi \sin(2c)^2/(2ec)^2 \indic\left\{ c \leq  c_0, m\neq 0\right\} +\pi e^{-\pi/(2c)}\indic\left\{ c > c_0, m\neq 0 \right\}=\theta(c)$   
%\left((2ec)^2/(2\pi \sin(2c)^2)\right) \vee \left( e^{\pi/(2c)}/\pi\right)= 1/\theta(c)$} 
for the second inequality, we obtain 
\begin{align}
&\left\| \mathcal{F}\left[f^{N}_{\delta}(x_0-\cdot)\right] -\mathcal{F}\left[ f^{N}_{0}(x_0-\cdot)	\right]\right\|_{L^2(\cosh(b\cdot))}^2  \notag \\
& =  \left\| \sum_{m \leq N} \frac{2\pi }{\sigma_{m}^{b,c}} \left\langle \left(f_{\delta}- f\right)(c\cdot + x_0),g_{m}^{ c/b}(\cdot)\right\rangle_{L^2(-1,1)} \varphi_m^{b,c}(\cdot)\right\|_{L^2(\cosh(b\cdot))}^2\notag\\
& =  \sum_{m \leq N} \left(\frac{2\pi }{\sigma_{m}^{b,c}}\right)^2 \left| \left\langle \left(f_{\delta}- f\right)(c\cdot + x_0),g_{m}^{ c/b}(\cdot)\right\rangle_{L^2(-1,1)} \right|^2 \notag  \\
& \leq  (2\pi)^2  \left\|\left(f_{\delta}- f\right)(c\cdot + x_0)\right\|^2_{L^2(-1,1)}   \sum_{m \leq N}\frac{c}{\rho_{m}	^{c/b}}\notag  \\
& \leq   \frac{(2\pi)^2 c \delta^2}{\theta(c)} \left\| \xi\right\|^2_{L^2(-1,1)}   \sum_{m \leq N}e^{2\beta(c/b) m}\notag \\
& \leq  \dfrac{(2\pi)^2  c \delta^2}{ \theta(c)\left(1 - e^{-2\beta(c/b)}\right)} e^{2\beta(c/b) N} \label{laformule} .
\end{align}
%\blue{((*) Tu pourrais mettre que le resultat est pour $0<c\le \pi/4$ car tu n'utilises pas $\beta_1$. Pourquoi se restreindre a cet intervalle?)} \red{Non, le resultat est  valable pour tout $c>0$, les deux inegalites du Thm 1 sont valables pour tout $c>0$, mais l'une est meilleure que l'autre pour les c grands. J'avais pris le pire des cas pour c grand. Garder $\beta$ et $\beta_1$ complique un peu mais en effet c'est meilleur. J'ai modifie en ce sens\\ } 
Using \eqref{eq:ft2}, we have 
\begin{align*}
 \mathcal{F}\left[f^{N}_{0}(x_0-\cdot)\right](\star)& = \sum_{m\leq N} \dfrac{ 2\pi}{ \sigma_m^{b,c}} \left\langle f(c\cdot + x_0),g_{m}^{ c/b}(\cdot)\right\rangle_{L^2(-1,1)} \varphi_{{m}}^{b,c}(\star)\\
 & =  \sum_{m\leq N}  \dfrac{ 2\pi}{ \sigma_m^{b,c}}  \left\langle \mathcal{F}_{b,c}\left[ \dfrac{1}{2\pi}\mathcal{F}\left[f(x_0- \cdot)\right]\right],g_{m}^{ c/b}	\right\rangle_{L^2(-1,1)} \varphi_{{m}}^{b,c}(\star)\\
& = \sum_{m\leq N} \dfrac{1}{ \sigma_m^{b,c}}  \left\langle  \mathcal{F}\left[f(x_0- \cdot)\right],\mathcal{F}_{b,c}^*\left[ g_{m}^{ c/b}\right]\right\rangle_{L^2(\cosh(b\cdot))} \varphi_{{m}}^{b,c}(\star)\\
& =  \sum_{m\leq N} \left\langle  \mathcal{F}\left[f(x_0- \cdot)\right],\varphi_{m}^{ c/b}\right\rangle_{L^2(\cosh(b\cdot))} \varphi_{{m}}^{b,c}(\star).
%& = \sigma_m^{b,c}\left\langle  e^{i\cdot x_0} \mathcal{F}\left[f\right](-\cdot),\varphi_{m}^{ c/b}(\cdot)\right\rangle_{L^2(\cosh(b\cdot))}	
%\left< \mathcal{F}[f(-\cdot)],\varphi_m^{b,c}\right>_{L^2(\cosh(b\cdot))}  = 
\end{align*}
% \left< \mathcal{F}[f(-\cdot)],\varphi_m^{b,c}\right>_{L^2(\cosh(b\cdot))} = \dfrac{\left\langle h(c\cdot),g_{m}^{ c/b}(\cdot)\right\rangle_{L^2(-1,1)}}{\sigma_m^{b,c}} $$
Thus,  using Proposition \ref{sec:upper:extension} and Pythagoras' theorem, we obtain
\begin{align}
\left\| \mathcal{F}\left[f^{N}_{0}(x_0-\cdot)\right] -\mathcal{F}\left[ f(x_0-\cdot)\right]\right\|_{L^2(\cosh(b\cdot))}^2  
& =\sum_{m >N}\left|\left< \mathcal{F}[f(x_0-\cdot)],\varphi_m^{b,c}(\cdot)\right>_{L^2(\cosh(b\cdot))}\right|^2 \notag \\ 
& \leq \sum_{m \in \N_0} \left( \frac{\omega_m}{\omega_N}\right)^2 \left|\left< \mathcal{F}[f(x_0-\cdot)],\varphi_m^{b,c}(\cdot)\right>_{L^2(\cosh(b\cdot))}\right|^2\notag \\ 
& \leq \dfrac{ M^2}{\omega_N^{2} }  \quad (\text{using} \ f\in \mathcal{H}_{\omega, x_0}^{b,c}(M))\label{eq:bias}.
\end{align}
Finally, using \eqref{eq:cent}-\eqref{eq:bias} yields
\begin{equation}\label{eq:final}
\left\| f_{\delta}^{N} - f\right\|_{L^2(\R)}^2 \leq \dfrac{1}{\pi}\left( \dfrac{  (2\pi)^2  c }{\theta(c)\left(1 - e^{-2\beta(c/b)}\right)}  \delta^2 e^{2\beta(c/b) N}+  \dfrac{M^2}{\omega_N^{2}} \right).
\end{equation}
Consider case \eqref{it:1}. Take $\delta$ small enough so that $\overline{N} \geq 2$ and $\log\left( \delta \log\left(1/\delta \right)^{2\sigma}\right)\le0$. By \eqref{eq:final} and the definition of $\left(\omega_N\right)_{n\in\N_0}$ in the first display below, $\overline{N}-1 \leq N \leq \overline{N}$ in the second display, and $\overline{N} \geq 2$ in the third display, we obtain
\begin{align}\notag
\left\| f_{\delta}^{N} - f\right\|_{L^2(\R)}^2 & \leq \dfrac{N^{-2\sigma}}{\pi}\left( \dfrac{(2\pi)^2 c }{\theta(c)\left(1 - e^{-2\beta(c/b)}\right)}  \delta^2 e^{2\beta(c/b) N}N^{2\sigma}+  M^2\right) \\ 
& \leq \dfrac{\overline{N}^{-2\sigma}\left(1-1/\overline{N}\right)^{-2\sigma}}{\pi}\left( \dfrac{(2\pi)^2  c }{\theta(c)\left(1 - e^{-2\beta(c/b)}\right)}  \delta^2 e^{2\beta(c/b) \overline{N}}\overline{N}^{2\sigma}+  M^2\right) \notag\\
& \leq \dfrac{\overline{N}^{-2\sigma}2^{2\sigma}}{\pi}\left( \dfrac{(2\pi)^2 c }{\theta(c)\left(1 - e^{-2\beta(c/b)}\right)}  \delta^2 e^{2\beta(c/b) \overline{N}}\overline{N}^{2\sigma}+  M^2\right).\notag
\end{align}
Using that %, for all $a,b>0$, $\sup_{t \geq 1 }\log(t)^{a}t^{-b} = \left(a/(eb) \right)^a$ in the second display, we have 	
\begin{align*}
 \delta^2 \exp\left( 2\beta\left(\dfrac{c}{b}\right)  \overline{N} \right) \overline{N}^{2\sigma}
% &  \leq  \exp\left(  2 \log(\delta) + 2\beta\left(\dfrac{c}{b}\right) \overline{N}  + 2\sigma \log\left( \overline{N}\right) \right) \\
 & = \exp\left(   2\sigma \log\left( \dfrac{1}{2\beta(c/b)}\right) + \log\left( \delta \log\left(\dfrac{1}{\delta} \right)^{2\sigma} \right) \right)  \leq \left( \dfrac{1}{\beta(c/b)} %\dfrac{ \sigma}{e} 
 \right)^{2\sigma},
\end{align*}
yields
\begin{align}%\notag
\left\| f_{\delta}^{N} - f\right\|_{L^2(\R)}^2 
%& \leq \dfrac{N^{-2\sigma}}{\pi}\left( \dfrac{(2\pi)^2 \theta(c) c }{1 - e^{-2\beta_2(c/b)}}  \delta^2 e^{2\beta_2(c/b) N}N^{2\sigma}+  M^2\right) \\ 
%& \leq \dfrac{\overline{N}^{-2\sigma}\left(1-1/\overline{N}\right)^{-2\sigma}}{\pi}\left( \dfrac{(2\pi)^2 \theta(c) c }{1 - e^{-2\beta_2(c/b)}}  \delta^2 e^{2\beta(c/b) N}N^{2\sigma}+  M^2\right) \notag\\
& \leq \dfrac{1}{\pi }  \left( 4\beta\left(\dfrac{c}{b}\right)\right)^{2\sigma} \left( \dfrac{(2\pi)^2 c }{\theta(c)\left(1 - e^{-2\beta(c/b)}\right)} \left( \dfrac{1}{\beta(c/b)} \dfrac{ \sigma}{e} \right)^{2\sigma} +  M^2 \right) 	(-\log(\delta))^{-2\sigma}, \label{eq:rate1}
\end{align}
hence the result.\\
Consider now case \eqref{it:2}.  Using 
% $\delta \leq 1$, using 
$\overline{N} -1 \leq N \leq \overline{N}$ in the first display and $
%\delta^2 \exp\left( 2\left(\beta\left(\dfrac{c}{b}\right) + \kappa\right) N \right)&  \leq 
\delta^2 \exp\left( 2\left(\beta\left(\dfrac{c}{b}\right) + \kappa \right) \overline{N} \right)=1$ and the definition of $\overline{N}$ 
%\end{align*}
in the second display,  yields
\begin{align}\notag
\left\| f_{\delta}^{N} - f\right\|_{L^2(\R)}^2 & \leq \dfrac{e^{-2\kappa \left(\overline{N} -1\right)} }{\pi}\left( \dfrac{(2\pi)^2 c }{\theta(c)\left(1 - e^{-2\beta(c/b)}\right)}  \delta^2 e^{2(\beta(c/b) + \kappa) \overline{N}}+  M^2\right) \\ 
& \leq \dfrac{e^{2\kappa}}{\pi}\left( \dfrac{(2\pi)^2  c }{\theta(c)\left(1 - e^{-2\beta(c/b)}\right)} +  M^2\right) \delta^{2\kappa/(\kappa+\beta(c/b))} ,%\quad 	\left(\text{using}	 \ \overline{N} = \dfrac{\ln(1/\delta)}{\kappa + \beta(c/b)}\right)
\notag 
\end{align}
hence the result.
\end{proof}
%\hfill $\square$\vspace{0.3cm}

The rate in \eqref{eq:log_rate} does not depend on $c$ but the constant blows up as $c\to0$ (see \eqref{eq:rate1}). %However, the explicit constant in case \eqref{it:1} given in \eqref{eq:rate1} depends on $c$. 
In contrast, the rate in \eqref{eq:exp_rate}  deteriorates for small values of $c$. The result \eqref{eq:exp_rate} is related to those obtained for the so-called ``2exp-severely ill-posed problems" (see \cite{cavalier2004block} for a survey and \cite{tsybakov2000best} which obtains similar polynomial rates) where the singular values decay exponentially and the functions are supersmooth. 

The proof of Theorem \ref{th:analytic} requires an upper bound on a sum involving the singular values for small $m$ in the denominator. Theorem \ref{prop:low1} allows to obtain \eqref{laformule}. 
Without it, one could at best obtain, instead of \eqref{laformule}, the upper bound 
$(2\pi)^2  c \delta^2(N+1)/\rho_N^{b,c}$. Also, because \eqref{eqWidom} is an equivalent of the logarithm we are unable to obtain a polynomial rate of convergence as sharp as in \eqref{eq:exp_rate}.

%Because \eqref{eqWidom} is an equivalent of the logarithm we are unable to obtain a polynomial rate of convergence as sharp as in \eqref{eq:exp_rate}.
Having a finite number of terms in \eqref{eq:ft1} is what makes the method ``stable". The number of terms plays the same role as smoothing parameters in a Tikhonov or Landweber method (Gerchberg method for out-of-band extrapolation, see \cite{Iv_imagaing}). The interested reader can prove rate of convergence for these methods using Proposition \ref{prop:low1}. These alternative methods sometimes have computational advantages over methods involving the computation of the SVD. However, they can face qualification issues with rates of convergence which are not fast enough. They can also be hard to tune properly other than by applying rules of thumb.  

As we have seen, the case where $\omega_m=1$ for all $m$ corresponds to analyticity. If the unknown function has no additional smoothness, we cannot obtain a $f^N_{\delta}$ which converges. In statistical terms we simply have nonparametric identification. The only difference with a statistical problem here is that the error is assumed to be bounded rather than random. It is well known in statistics that only smooth functions can be estimated and the rates are faster as the function is smoother. Theorem \ref{th:analytic} considers the whole picture with both ordinary smooth and super smooth functions. It shows that one can have slow or fast rates of convergence. 
Taking a maximum over a class $\mathcal{H}_{\omega, x_0}^{b,c}(M)$ in Theorem \ref{th:analytic} is important to have a valid concept of optimality. 
A detailed analysis of extrapolation with a random error will be carried elsewhere and will contain a minimax lower bound which gives the best achievable rate over all possible methods. This is typically obtained as in \cite{estimation} with the results of Section \ref{sec:upper}.

Also, all regularization methods are very sensitive to the choice of the smoothing parameter and the optimal choice is usually unfeasible because it depends on the unknown function. The class to which the true function belongs is unknown in nearly all applications. For this reason, the next step to make a method useful in practice is to prove that a feasible choice of the smoothing parameter (here a $\overline{N}$ which does not depend on the class) nearly achieves the optimal rate of convergence. Similar to \cite{estimation}, we use a Goldenshkluger-Lepski type method that we present in Section \ref{sec:sim}. The proof that a nearly optimal rate is attained with such a method, up to now, always requires upper bounds on the singular functions like the ones we prove in Section \ref{sec:unif} (see, \emph{e.g.}, \cite{barron1999risk}).
%We give a numerical illustration in Section \ref{sec:sim}.

\section{Upper bounds on the eigenvalues of $\mathcal{Q}_c$}\label{sec:upper}
% Note that when $c\to0$
%$$- \pi m \dfrac{\mt{K}(\text{sech}(\pi c))}{\mt{K}(\tanh(\pi c))} \sim  -2m \log\left(\dfrac{\sqrt{2}}{\pi c}\right).$$
%\blue{Ici la plage de $c$ est limitee. C'est une plage qui nous interesse? Je pensais que dans le papier de stats pour les $c$ petits on faisait de l'interpolation.} \red{On se sert de la borne sup pour la borne inf du papier de stat, donc dans on n'utlise pas d'interpolation et on peut choisir des fonctions tests qui ont un support limite en t, donc se limiter a des valeurs de c petites. Ce qui nous permet de retrouver les bonnes vitesses. \\}
%\red{Les comportements pour $c$ petits ne sont pas exactement pareils (j'ai rajouré le texte après le theoreme) (il y a une constante devant le membre de gauche ds le facteur de l'exponentielle) en dessous. Peut etre qu'il faut supprimer la phrase d'intro sur les comportements identiques?}
\begin{theorem}\label{prop:sup1}
For $m\in \N_0$ and $0<c<1$, we have
$$\rho_m^{c}   \leq 	\dfrac{2\sqrt{\pi}c^{2m+1}}{\sqrt{m+3/4}(1- 	c^2)}.$$
\end{theorem}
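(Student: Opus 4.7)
My plan is to exploit the min--max characterization of the $(m+1)$-st eigenvalue of the positive self-adjoint trace class operator $\mathcal{Q}_c$:
\[
\rho_m^c = \min_{V:\,\dim V = m}\ \max_{f\in V^\perp\setminus\{0\}}\frac{\langle \mathcal{Q}_c f,f\rangle_{L^2(-1,1)}}{\|f\|_{L^2(-1,1)}^2},
\]
so I only need to produce one convenient $m$-dimensional test subspace. I would take $V$ to be the space of polynomials of degree strictly less than $m$, so that the complement $V^\perp$ in $L^2(-1,1)$ is spanned by the Legendre polynomials $(P_k)_{k \geq m}$, with $\|P_k\|_{L^2(-1,1)}^2 = 2/(2k+1)$.

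The key identity is the one derived inside the proof of Lemma \ref{it:decr_sigma}, namely $\langle \mathcal{Q}_c f,f\rangle_{L^2(-1,1)} = \int_{\R} \sech(\xi/c)|\mathcal{F}[\mathcal{E} f](\xi)|^2\,d\xi$. Writing $f = \sum_{k\geq m}c_k P_k$ and using the Rayleigh--Gegenbauer expansion $e^{i\xi t} = \sum_{k\geq 0}(2k+1)i^k j_k(\xi)P_k(t)$, where $j_k$ denotes the spherical Bessel function of order $k$, termwise integration gives $\mathcal{F}[\mathcal{E} f](\xi) = 2\sum_{k\geq m}c_k i^k j_k(\xi)$. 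Applying Cauchy--Schwarz with weights $2/(2k+1)$, so that $\|f\|_{L^2(-1,1)}^2$ appears on the right, leads to
\[
|\mathcal{F}[\mathcal{E} f](\xi)|^2 \leq 2\|f\|_{L^2(-1,1)}^2\sum_{k\geq m}(2k+1)j_k(\xi)^2,
\]
reducing the task to estimating $\sum_{k\geq m}(2k+1)\int_{\R} \sech(\xi/c)j_k(\xi)^2\,d\xi$.

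To handle this sum, I would invoke the classical bound $|J_\nu(x)|\leq (x/2)^\nu/\Gamma(\nu+1)$, which via $j_k(\xi) = \sqrt{\pi/(2\xi)}J_{k+1/2}(\xi)$ translates into the pointwise inequality $|j_k(\xi)|^2\leq \xi^{2k}/((2k+1)!!)^2$. The integral $\int_{\R}\sech(\xi/c)\xi^{2k}d\xi = 4c^{2k+1}(2k)!\,\beta(2k+1)$, where $\beta$ is the Dirichlet beta function, follows by expanding $\sech(u) = 2\sum_{n\geq 0}(-1)^n e^{-(2n+1)|u|}$ for $u>0$ and integrating termwise. Using $(2k+1)! = (2k+1)!!\,(2k)!!$ the expression simplifies to a constant multiple of $\sum_{k\geq m}\bigl((2k)!!/(2k+1)!!\bigr)\beta(2k+1)c^{2k+1}$. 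Applying $\beta(2k+1)\leq 1$ (as $\beta$ increases from $\pi/4$ toward $1$), Kazarinoff's inequality $(2k)!!/(2k+1)!! < \sqrt{\pi/(4k+3)}$, and the monotonicity $1/\sqrt{4k+3}\leq 1/(2\sqrt{m+3/4})$ valid for $k\geq m$, reduces the tail to the geometric series $\sum_{k\geq m}c^{2k+1} = c^{2m+1}/(1-c^2)$, finite exactly because $0<c<1$, and yields a bound of the claimed form.

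The principal technical hurdle is tracking constants: the Cauchy--Schwarz step, the Bessel estimate (which is sharp near $\xi=0$ but pessimistic in the oscillatory regime where $|j_k|$ decays like $1/|\xi|$), and the combined use of $\beta\leq 1$ with the Wallis--Kazarinoff inequality all accumulate multiplicative factors, so careful bookkeeping is needed to recover the precise prefactor $2\sqrt{\pi}$ rather than a larger constant of the same asymptotic form.
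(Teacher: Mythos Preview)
Your approach is essentially the paper's own argument in different notation: the paper also chooses $V=\mathrm{span}(P_0,\dots,P_{m-1})$ in the min--max formula, applies Cauchy--Schwarz to reduce to $\sum_{k\ge m}(k+1/2)\|\mathcal{F}_{1,c}^*P_k\|^2_{L^2(\cosh)}$, expresses $\mathcal{F}_{1,c}^*P_k$ through $J_{k+1/2}$ (equivalently your $j_k$), invokes the same pointwise bound $|J_\nu(x)|\le (x/2)^\nu/\Gamma(\nu+1)$, and closes with Kershaw's inequality $\Gamma(k+1)/\Gamma(k+3/2)<(k+3/4)^{-1/2}$, which is precisely your Wallis--Kazarinoff bound recast via Gamma functions.

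Your caution about the prefactor is warranted, and in fact pinpoints a slip in the paper. Running your chain carefully gives
\[
\rho_m^c \;\le\; 8\sum_{k\ge m}\frac{(2k)!!}{(2k+1)!!}\,\beta(2k+1)\,c^{2k+1}
\;\le\; \frac{4\sqrt{\pi}\,c^{2m+1}}{\sqrt{m+3/4}\,(1-c^2)},
\]
so $4\sqrt{\pi}$ rather than $2\sqrt{\pi}$. The paper reaches $2\sqrt{\pi}$ through the step $\int_{\R}x^{2k}\sech(x)\,dx<2\Gamma(2k+1)$, but the exact value of that integral is $4(2k)!\,\beta(2k+1)$ (your formula), which already equals $\pi>2$ at $k=0$; the correct uniform bound is $4\Gamma(2k+1)$, and this propagates to the same $4\sqrt{\pi}$ you obtain. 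So the discrepancy you anticipated is real and not an artifact of your bookkeeping: neither your argument nor the paper's, as written, yields the stated constant $2\sqrt{\pi}$, though both give the identical bound with $4\sqrt{\pi}$.
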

%\noindent {\bf Proof.} 
\begin{proof} 
The proof is similar to that of Theorem 3.1 in \cite{bonami2018} so we will be brief on the common arguments.  %For the sake of completeness we repeat the common arguments.  
Let  $S_m$ be the set of $m$-dimensional vector subspaces of $L^2(-1,1)$, $\left(P_m\right)_{m\in \N_0}$ the Legendre polynomials with normalization $P_m(1)=1$, and $V$  the vector space spanned by $P_0,\dots, P_{m-1}$. %\blue{(comme espace de fonctions a valeurs dans $\C$?)} \red{Oui il me semble que c'est ok pour Legendre dans $\C$}. 
By the minimax principle (Theorem 4	 p 212 in \cite{birman2012spectral}) and \eqref{eq:Qc1}, %the $m+1$-th  eigenvalue $\rho_m^{c}$ satisfies
\begin{equation}\label{erho} \rho_m^{c}% = \underset{V\in S_m}{\min} \underset{f \in V^{\perp}}{\max}\dfrac{\braket{\mathcal{Q}_cf,f}_{L^2(-1,1)}}{\left\| f\right\|^2_{L^2(-1,1)}} 
= \underset{V\in S_m}{\min} \underset{f \in V^{\perp}}{\max}\dfrac{ c\left\|  \mathcal{F}_{1,c}^*[f] \right\|_{L^2(\cosh)}^2}{\left\| f\right\|^2_{L^2(-1,1)}}. 
\end{equation}
 Take $f\in V^{\perp}$ of norm 1. Using that $(\sqrt{m+1/2}P_m)_{m\in \N_0}$ is an orthonormal basis of $L^2(-1,1)$, %there exists a sequence $(a_k)_{k\ge m}$ such that $\sum_{k=m}^{\infty}|a_k|^2=1$ and $f= \sum_{k=m}^{\infty} a_k \sqrt{k+1/2}P_k$. 
 by \eqref{erho} and the Cauchy-Schwarz inequality %yields, for \emph{a.e.}	 $x\in\R$,
%$$\left|\mathcal{F}_{1,c}^* f (x)\right|^2\le  \left(\sum_{k=m}^{\infty} |a_k|^2\right) \left(\sum_{k=m}^{\infty}\left(k+\df{1}{2}\right) \left|\mathcal{F}_{1,c}^* P_k(x)\right|^2\right)$$
%and after integration 
%$$\left\|\mathcal{F}_{1,c}^* f \right\|_{L^2(\cosh)}^2\le\sum_{k=m}^{\infty}\left(k+\df{1}{2}\right) \left\|\mathcal{F}_{1,c}^*P_k\right\|^2_{L^2(\cosh)}.$$
%Thus, we have
\begin{align}\label{ubrho}
\rho_m^{c} &\leq c	\sum_{k=m}^{\infty}\left(k+\df{1}{2}\right) \left\|\mathcal{F}_{1,c}^* P_k\right\|^2_{L^2(\cosh)}.
\end{align}
By (18.17.19) in \cite{olver2010nist}, %we obtain, 
for \emph{a.e.}	 $x$ and all $c>0$, 
\begin{align*}
\mathcal{F}_{1,c}^*\left[P_k\right](x)
%& = \sech\left(x\right)\mathcal{F}_{1,c} \left[ \mathcal{E}\left[P_k\right]\right](-x) \\
%&  =\sech\left( \dfrac{\pi x}{2}\right) \int_{-1}^{1}e^{-icxy}P_k(y)dy\\
& =\sech\left(x\right) i^{k} \sqrt{\dfrac{2\pi}{c\abs{x}}}J_{k+1/2}(c\abs{x}),
\end{align*}
where $J_{k+1/2}$ is the Bessel function of the first kind of order $k+1/2$. By (9.1.62) in \cite{abramowitz1965handbook}, we have % in the first display, $\Gamma(k+3/2)=(k+1/2)\Gamma(k+1/2)$ 	and $\Gamma(k+1/2) \geq \sqrt{2\pi}  e^{-k-1/2} (k+1/2)^{k}$ (see (1.4) in \cite{inq_gamma2}) in the second display, we obtain
\begin{align*}
\left| \mathcal{F}_{1,c}^*\left[P_k\right](x)\right|^2
& \leq  \pi \left(\dfrac{\sech\left(x\right)}{\Gamma(k+3/2)}\right)^2\left(\frac{cx}{2}\right)^{2k}
%& \leq \sech\left(x\right) \sqrt{\dfrac{e}{2}} \dfrac{1}{k+1/2} \left(\dfrac{ec}{2k+1}\right)^{k} |x|^{k}. 
\end{align*}
and conclude by \eqref{ubrho} and
%Thus, we have
\begin{align*}
\left\| \mathcal{F}_{1,c}^*\left[P_k\right]\right\|_{L^2(\cosh)}^2
& \leq  \pi \dfrac{1}{\Gamma(k+3/2)^2}\left(\frac{c}{2}\right)^{2k} \int_{\R}x^{2k}\sech\left(x\right)dx\\
& <  2\pi \dfrac{\Gamma(2k+1)}{\Gamma(k+3/2)^2}\left(\frac{c}{2}\right)^{2k}\\
& =  2\pi \dfrac{\Gamma(2(k+1))}{(2k+1)\Gamma(k+3/2)^2}\left(\frac{c}{2}\right)^{2k}\\
& = 2\sqrt{\pi} \dfrac{\Gamma(k+1)}{(k+1/2)\Gamma(k+3/2)}c^{2k}\quad\text{(by Legendre's duplication formula)}\\
& <  \dfrac{2\sqrt{\pi}}{(k+1/2)\sqrt{k+3/4}
%)\vee\Psi(k+1/2+1/\sqrt{2})
}c^{2k}\quad\text{(by Kershaw's inequality, see \cite{Kershaw})}.
%\dfrac{e}{2(k+1/2)^2} \left(\dfrac{ec}{2(k+1/2)}\right)^{2k} \int_{\R}x^{2k}\sech\left(x\right)dx \\
%& \leq \dfrac{e}{(k+1/2)^2} \left(\dfrac{ec}{2(k+1/2)}\right)^{2k}\int_{0}^{\infty}x^{2k}e^{-x}dx \\
%& \le \dfrac{e}{(k+1/2)^2} \left(\dfrac{ec}{2(k+1/2)}\right)^{2k} \Gamma(2k+1).
\end{align*}
%Then, 
%\eqref{ubrho} yields
%%and for $0< c <\pi/2$,
%\begin{align*}
%\rho_m^{c}&\leq 2c\sqrt{2\pi}	\sum_{k=m}^{\infty}\dfrac{1}{\sqrt{2k+1}}c^{2k}
%\end{align*}
%hence, for $0< c <1$,  this yields the result.
%Alternatively, by Guo, Qi, and Srivastava,
%\begin{align*}
%\left\| \mathcal{F}_{1,c}^*\left[P_k\right]\right\|_{L^2(\cosh)}^2
%&< \frac{2\sqrt{e\pi}}{k+1/2} \dfrac{(k+1)^{k+1}}{(k+3/2)^{k+3/2}}c^{2k}
%%\dfrac{e}{2(k+1/2)^2} \left(\dfrac{ec}{2(k+1/2)}\right)^{2k} \int_{\R}x^{2k}\sech\left(x\right)dx \\
%%& \leq \dfrac{e}{(k+1/2)^2} \left(\dfrac{ec}{2(k+1/2)}\right)^{2k}\int_{0}^{\infty}x^{2k}e^{-x}dx \\
%%& \le \dfrac{e}{(k+1/2)^2} \left(\dfrac{ec}{2(k+1/2)}\right)^{2k} \Gamma(2k+1).
%\end{align*}
%and 
%\begin{align*}
%\rho_m^{c}&\leq \frac{2c\sqrt{e\pi}}{\sqrt{m+3/2}}\sum_{k=m}^{\infty}c^{2k}
%  \end{align*}
%\begin{align*}
%\rho_m^{c} &\leq 	\dfrac{4ec(c/2)^{2m}}{\sqrt{2m+1}(1-(c/2)^2)}.
%\end{align*}
%\hfill $\square$\vspace{0.3cm}
\end{proof}
Lemma B.4 in \cite{estimation} gives upper bounds in the case of the PSWF which are uniform in $m$ and $c$. In \cite{bonami2018} the bounds are valid for all values of $c$ but only for $m$ large enough depending on $c$. The %orem \ref{prop:sup1} is based on the 
proof techniques do not allow to extend to $c\ge 1$ in our case %of This does not seem to be feasible in our case 
due to the weighted integral over the line.  %and requires further investigation. 
Uniformity in $m$ is used to prove the minimax lower bound in  \cite{estimation}. The range of $c$ in Theorem  \ref{prop:sup1} is the important one to construct the so-called test functions to prove the minimax lower bounds in \cite{estimation}. % and to prove optimality. %There, the obtained result is enough to obtain upper and lower bounds for rates of convergence which match, hence to prove that a certain estimator is optimal.  
%, there does not exists uniform controls over $c$ and $m$ in the PSWF case (see \cite{bonami2018})

By Corollary \ref{cor:X} and Theorem \ref{prop:sup1}, we have, for all $0<c\le c_0<1$ and $m\in\N$, 
%Let $c_0 \in (0,\pi/2)$. Note that for $0<c\leq c_0$ and all $m\in\N$, \eqref{eq:theta_beta} yield  
%$$   \dfrac{8}{\pi}\exp\left( - 2m \left( 2-\log(4) + \log\left(\dfrac{2	}{c}\right)\right) \right) \leq \rho_m^{c}   \leq 	\dfrac{e\pi}{\sqrt{2m+1}(1- (	\pi /8)^2)}  \exp\left( - 2m \log\left(\dfrac{2}{c}\right)\right)$$
$$\dfrac{2\sin(2c_0)^2c}{(ec_0)^2}\exp\left( -2 \left(\log\left(\dfrac{1}{c}\right)
+2 +\log\left(\dfrac{7\pi}{2}\right)
\right)m\right) \leq \rho_m^{c} 
  \leq 	\dfrac{2\sqrt{\pi}c_0}{\sqrt{m+3/4}(1- c_0^2)} \exp\left( -2 \log\left(\dfrac{1}{c}\right)m\right) .$$
The exponential factors in these upper and lower bounds have a similar behavior as $c$ approaches 0. In Figure \ref{fig:bounds} we compare the upper and lower bounds to Widom's equivalent \eqref{eqWidom}.% as a function of $c$.}  %and else differ by a small exponential factor (indeed $2(2+\log(7\pi/2))=8.795$).
%\red{The proof of Theorem \ref{prop:sup1} adapts the proof of , which deals with the PSWF, to the case of $g_m^{c}$.} 
\begin{figure}[H]
	\centering
\subfigure[$c <1$]{\includegraphics[width=0.45\linewidth, height=0.3\textheight]{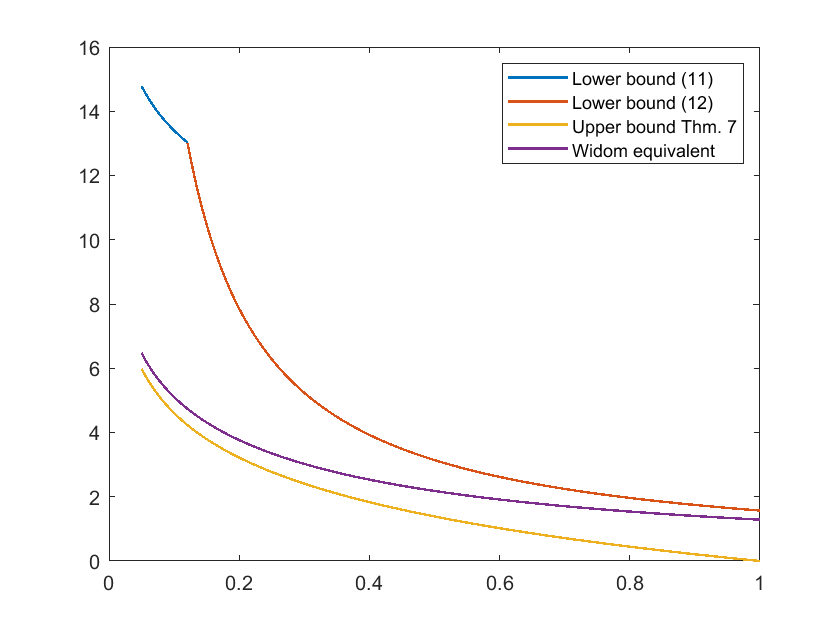}
\label{fig:subfigure1}}
\quad
\subfigure[$c \geq 1$]{\includegraphics[width=0.45\linewidth, height=0.3\textheight]{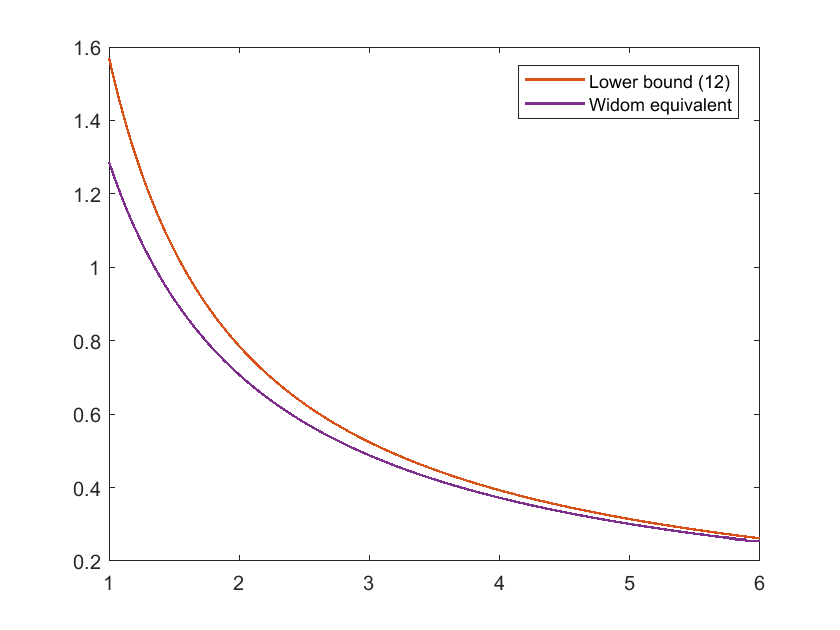}
\label{fig:subfigure2}}
\caption{%Analytic continuation in 
Bounds on $\lim_{m \to \infty} - \log(\rho_m^c)/m$ and Widom's equivalent \eqref{eqWidom}.}
	\label{fig:bounds}
\end{figure}

\section{Properties of a differential operator which commutes with $\mathcal{Q}_c$}\label{sec:diff}
In this section, we consider differential operators $\mathcal{L}[\psi]=-\left(p \psi'\right)'  +q\psi$ on $L^2(-1,1)$, with (1) $p (x)=\cosh(4c) - \cosh(4cx)$ and $q(x)=3 c^2\cosh(4cx)$, 
(2) $p(x)=1-x^2$ and $q(x)=q^c(x)$ where, for  $Y(x)= \sin\left(X(x)\right)$, $X(x)=\left(\pi/U(c)\right) \int_{0}^x p(\xi)^{-1/2}d\xi$,  and $$U(c)=\int_{-1}^1  p(\xi)^{-1/2}d\xi,$$ 
\begin{equation}\label{eq:tan}q^c(Y(x))=\frac12+\frac14\left(\tan\left(X(x)\right)\right)^2-\left(\frac{U(c)c}{\pi}\right)^2\left( \cosh(4cx)+\frac{\left(\sinh(4cx)\right)^2}{p(x)}\right),
\end{equation}
and (3) $p(x)=1-x^2$ and $q(x)=0$.
% In the three cases, $q$ can be extended as a continuous function on $[-1,1]$ and is bounded from below. 
By \cite{Widom} (see also \cite{morrison1962commutation}), the eigenfunctions of $\mathcal{Q}_c$ are those of the differential operator in case (1) with domain $\mathcal{D} \subset \mathcal{D}_{\max} = \left\{\psi \in L^2(-1,1):\ \mathcal{L}[\psi] \in L^2(-1,1)  \right\}$ with boundary conditions of continuity at $\pm1$. This is an important property for the asymptotic analysis in \cite{Widom} and to obtain bounds on the sup-norm of these functions in Section \ref{sec:unif} and numerical approximations of them in Section \ref{sec:nmerical}. To study $\mathcal{L}$ in case (1),  \cite{Widom} uses the changes of variable and function, for all $x \in (-1,1)$ and $\psi \in \mathcal{D}_{\max}$, %, for $x\in[-1,1]$, %$y = Y(x)$ and 
\begin{align}
y &=  Y(x)%\quad \text{for }x\in[-1,1]
, \   \label{eq:y}\\
\forall y\in (-1,1), \   \Gamma(y)&= F(y)\psi\left(Y^{-1}(y)\right), \  F(y)= \left(\df{p( Y^{-1}(y) )}{1-y^2}\right)^{1/4},	\label{eq:gamma}
%U(c) = & \int_{-1}^{1} \df{1}{\sqrt{ \cosh(c) - \cosh(c\xi)}}d\xi  \label{eq:U}
\end{align}
where $Y$ is a $C^\infty-$diffeomorphism on $(-1,1)$. This relates an eigenvalue problem for (1) to an  eigenvalue problem for (2) and it is useful to view the operator in case (2) as a perturbation of the operator in case (3).  In the three cases, $1/p$ and $q$ are holomorphic on a simply connected open set $(-1,1)\subseteq E\subseteq\C$. The spectral analysis involves the solutions to ($H_\lambda$): $-\left(p \psi'\right)'  +(q-\lambda)\psi=0$ with $\lambda\in\C$ 
%\blue{($\lambda\in\R$ si autoadjoint, le cas pour (2) et on s'y ramene)} 
which are holomorphic on $E$ and span a vector space of dimension 2 (see Sections IV 1 and 10 in  \cite{Hartman}). So they are infinitely differentiable on $(-1,1)$, have isolated zeros in $(-1,1)$, and the condition of continuity (or boundedness) at $\pm1$ makes sense. 

We now present a few useful estimates. 
\begin{lemma}\label{lem:upper_L}
We have, for all $ c >0 $, 
$$
 \dfrac{\sqrt{2}e^{2c}}{\sinh(4c)} < U(c) <  \pi\dfrac{\sqrt{2}e^{2c}}{\sinh(4c)}. 
$$
\end{lemma}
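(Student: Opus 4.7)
The whole proof hinges on the algebraic identity
$$\cosh(4c)-\cosh(4c\xi)=2\bigl(\sinh^2(2c)-\sinh^2(2c\xi)\bigr),$$
a direct consequence of $\sinh^2 A=(\cosh(2A)-1)/2$. The lower bound is then immediate: since $\cosh(4c\xi)\geq 1$, the weight $p(\xi)\leq 2\sinh^2(2c)$, hence
$$U(c)\geq \frac{2}{\sqrt{2}\sinh(2c)}=\frac{\sqrt{2}}{\sinh(2c)}.$$
Using $\sinh(4c)=2\sinh(2c)\cosh(2c)$, this rewrites as $U(c)\geq 2\sqrt{2}\cosh(2c)/\sinh(4c)$, and because $2\cosh(2c)=e^{2c}+e^{-2c}>e^{2c}$ for $c>0$, the desired strict left inequality follows.

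For the upper bound, I would introduce the change of variable $\sinh(2c\xi)=\sinh(2c)\sin\theta$, a diffeomorphism from $[-1,1]$ onto $[-\pi/2,\pi/2]$. Differentiating and combining with the factorization above (and $\cosh(2c\xi)=\sqrt{1+\sinh^2(2c)\sin^2\theta}$) recasts $U(c)$ as the elliptic-type integral
$$U(c)=\frac{1}{c\sqrt{2}}\int_{0}^{\pi/2}\frac{d\theta}{\sqrt{1+\sinh^2(2c)\sin^2\theta}}.$$
Applying Jordan's inequality $\sin\theta\geq 2\theta/\pi$ on $[0,\pi/2]$ replaces the denominator by $\sqrt{1+(2\sinh(2c)\theta/\pi)^2}$; the substitution $u=2\sinh(2c)\theta/\pi$ yields an elementary $\mathrm{arcsinh}$ integral, and the decisive cancellation $\mathrm{arcsinh}(\sinh(2c))=2c$ produces $U(c)\leq\pi/(\sqrt{2}\sinh(2c))$. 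Rewriting this as $\pi\sqrt{2}\cosh(2c)/\sinh(4c)$ and using $\cosh(2c)<e^{2c}$ for $c>0$ completes the right inequality.

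\textbf{Main obstacle.} The only nontrivial step is locating the correct substitution. Direct comparisons of $\cosh(4c)-\cosh(4c\xi)$ to multiples of $(1-\xi)(1+\xi)$ via $\sinh(t)\geq t$ give a bound of order $1/c$, which matches the target as $c\to 0$ but cannot reproduce the $O(e^{-2c})$ decay required as $c\to\infty$. The substitution $\sinh(2c\xi)=\sinh(2c)\sin\theta$ is precisely what absorbs the $\sinh(2c)$ dependence into a single universal elliptic-type integrand with the correct large-$c$ behavior; once this form is available, both halves reduce to a few lines of hyperbolic-function algebra.
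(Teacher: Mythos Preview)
Your argument is correct. Both inequalities follow as you claim: for the lower bound, the trivial comparison $p(\xi)\le 2\sinh^2(2c)$ gives $U(c)\ge \sqrt{2}/\sinh(2c)=2\sqrt{2}\cosh(2c)/\sinh(4c)>\sqrt{2}e^{2c}/\sinh(4c)$; for the upper bound, your substitution $\sinh(2c\xi)=\sinh(2c)\sin\theta$ indeed yields
\[
U(c)=\frac{1}{c\sqrt{2}}\int_0^{\pi/2}\frac{d\theta}{\sqrt{1+\sinh^2(2c)\sin^2\theta}},
\]
and Jordan's inequality followed by the $\mathrm{arcsinh}$ evaluation gives $U(c)\le \pi/(\sqrt{2}\sinh(2c))<\pi\sqrt{2}e^{2c}/\sinh(4c)$.

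The paper proceeds differently. It quotes Widom's closed form $U(c)=K\bigl((e^{4c}-1)/(e^{4c}+1)\bigr)/\bigl(c\sqrt{1+\cosh(4c)}\bigr)$, where $K$ is the complete elliptic integral of the first kind, and then invokes off-the-shelf two-sided bounds on $K$ (Corollary~3.3 of Anderson--Vamanamurthy--Vuorinen). Your route is essentially the same at the structural level---your integral is just this elliptic integral in imaginary-modulus form---but your treatment is self-contained: the lower bound bypasses the elliptic representation entirely, and the upper bound replaces the cited inequality for $K$ by the elementary pair (Jordan's inequality, $\mathrm{arcsinh}(\sinh(2c))=2c$). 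What you gain is independence from external references and a transparent mechanism for the $e^{-2c}$ decay; what the paper's version gains is brevity once one is willing to cite the literature on $K$. As a bonus, your intermediate bounds $\sqrt{2}/\sinh(2c)\le U(c)\le \pi/(\sqrt{2}\sinh(2c))$ are slightly sharper than the stated lemma before the final relaxation via $e^{2c}<2\cosh(2c)<2e^{2c}$.
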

\begin{proof}
By the second equation page 229 of \cite{Widom}
\[ U(c) = \df{1}{c(1+\cosh(4c))^{1/2}} K\left(\df{e^{4c}-1}{e^{4c}+1}\right),\]
and the result follows from the fact that, by %where $K$ is defined page 4. Then, by 
Corollary 3.3 in \cite{anderson1992functional}, 
$$  \dfrac{c e^{2c}}{\sinh(2c)}      < K\left(\df{e^{4c}-1}{e^{4c}+1}\right) <  \dfrac{\pi c e^{2c}}{\sinh(2c)}.$$ 
We obtain the final expressions by classical relations between hyperbolic functions.
%\hfill $\square$%\vspace{0.3cm}
\end{proof}

We make use of the identity, for all $x\in[-1,1]$, 
\begin{align}
p(x) &= 4c \sinh(4c) (1- x) (1+ u(x)), \label{eq:psiu}\\ 
u(x) &=  \int_{x}^1 \dfrac{4c \cosh(4ct)}{\sinh(4c)(1-x)}(x-t) dt, \label{eq:psiu1}
\end{align}
which is obtained by Taylor's theorem with remainder in integral form
$$ \cosh(4cx) = \cosh(4c) + (x-1)4c \sinh(4c) + \int_{1}^x 16c^2 \cosh(4ct)(x-t) dt.$$
Also, $u$ is increasing on $[-1,1]$ because, for all $x\in[-1,1]$,
\begin{equation}\label{eq:u}
u'(x) =   \dfrac{4c}{\sinh(4c)(1-x)^2} \int_{x}^1 \cosh(4ct) (1-t) dt >0
\end{equation}
and, for all $x\in[0,1]$, \
%blue{(il y avait une erreur
%%, et pourquoi seulement $[0,1]$?
%)}
\begin{equation}\label{eq:u0}
-1 + \frac{1}{4c \sinh(4c)}\left(\cosh(4c)-1\right)\le u(x)\le 0.
\end{equation}
\begin{lemma}\label{lem:lb}
We have, for all $ c >0 $ and $x\in[0,1]$, 
$$
\dfrac{c\sinh(4c)}{1-x} - 
\frac{8c^3\sinh(4c)\cosh(4c)}{3\left(\cosh(4c)-1\right)}\le \left(\int_{x}^{1}p(\xi)^{-1/2}d\xi\right)^{-2} 
\leq  \dfrac{c\sinh(4c)}{1-x} .
$$
\end{lemma}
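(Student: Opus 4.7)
For the upper bound, identity \eqref{eq:psiu} together with $u(\xi)\le 0$ from \eqref{eq:u0} gives $p(\xi)\le 4c\sinh(4c)(1-\xi)$ on $[0,1]$, so $\int_x^1 p(\xi)^{-1/2}\,d\xi\ge\sqrt{1-x}/\sqrt{c\sinh(4c)}$; squaring and inverting yields the claimed upper bound.

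For the lower bound, set $J(x):=\sqrt{1-x}/\sqrt{c\sinh(4c)}$ and $a(x):=\frac{4c^2\cosh(4c)(1-x)}{3(\cosh(4c)-1)}$. The plan is to establish $\int_x^1 p(\xi)^{-1/2}\,d\xi\le J(x)(1+a(x))$; squaring and invoking the elementary inequality $(1+a)^{-2}\ge 1-2a$ for $a\ge 0$ (which follows from $(1+a)^2(1-2a)=1-3a^2-2a^3\le 1$) yields $\left(\int_x^1 p(\xi)^{-1/2}\,d\xi\right)^{-2}\ge J(x)^{-2}-2a(x)J(x)^{-2}$, and a direct computation using $J(x)^{-2}=c\sinh(4c)/(1-x)$ confirms that $2a(x)J(x)^{-2}$ equals precisely the claimed subtrahend $\frac{8c^3\sinh(4c)\cosh(4c)}{3(\cosh(4c)-1)}$.

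Two ingredients feed this key inequality. First, a sharper lower bound on $p$: with $f(\eta):=p(1-\eta)$, one computes $f(0)=0$, $f'(\eta)=4c\sinh(4c(1-\eta))$ so $f'(0)=4c\sinh(4c)$, and $f''(\eta)=-16c^2\cosh(4c(1-\eta))\ge -16c^2\cosh(4c)$ on $[0,1]$, so Taylor's theorem with integral remainder yields $p(\xi)\ge 4c\sinh(4c)(1-\xi)-8c^2\cosh(4c)(1-\xi)^2$; through \eqref{eq:psiu} this translates to $|u(\xi)|\le\frac{2c\cosh(4c)(1-\xi)}{\sinh(4c)}$. Second, the elementary bound $(1-v)^{-1/2}\le 1+v/(2r)$ for $v\in[0,1-r]$ and $r\in(0,1]$: squaring reduces it to $(1-v)(2r+v)^2\ge 4r^2$, whose difference factors as $v[4r(1-r)+v(1-4r)-v^2]$, and the bracket, a concave quadratic in $v$, is nonnegative at both $v=0$ (value $4r(1-r)$) and $v=1-r$ (value $r(1-r)$), hence throughout $[0,1-r]$. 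Applying this with $v=|u(\xi)|$ and $r=(\cosh(4c)-1)/(4c\sinh(4c))$, which satisfies $r\le 1/2$ (equivalent to $\tanh(2c)\le 2c$) and $|u(\xi)|\le 1-r$ by \eqref{eq:u0}, gives $(1+u(\xi))^{-1/2}\le 1+|u(\xi)|/(2r)$.

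Integrating this last inequality against $(4c\sinh(4c)(1-\xi))^{-1/2}$, the constant term contributes $J(x)$, while the $|u(\xi)|/(2r)$ term, after substituting the bound on $|u|$ and evaluating $\int_x^1(1-\xi)^{1/2}\,d\xi=\tfrac{2}{3}(1-x)^{3/2}$, simplifies through $1/r=4c\sinh(4c)/(\cosh(4c)-1)$ to exactly $a(x)J(x)$, completing the proof. The main obstacle I anticipate is this final arithmetic matching: the claimed constant is essentially the tightest this pipeline can yield, so the factors $1/(2r)$, $2c\cosh(4c)/\sinh(4c)$, and $2/3$ must combine precisely with $J(x)$ to produce $a(x)J(x)$, with no slack available downstream since the step $(1+a)^{-2}\ge 1-2a$ already uses the full leading order.
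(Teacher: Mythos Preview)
Your proof is correct. The core ingredients coincide with the paper's: the bound $|u(\xi)|\le\frac{2c\cosh(4c)(1-\xi)}{\sinh(4c)}$, the lower bound $1+u\ge r=(\cosh(4c)-1)/(4c\sinh(4c))$ from \eqref{eq:u0}, and the elementary inequality $(1+a)^{-2}\ge 1-2a$; and the final arithmetic matches exactly as you anticipated.

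The packaging, however, differs. The paper first writes the \emph{exact} identity $\int_x^1 p(\xi)^{-1/2}d\xi=J(x)(1+\widetilde{u}(x))$ with $\widetilde{u}$ defined through $\int_\xi^1 u'(t)(1+u(t))^{-3/2}dt$, then bounds $\widetilde{u}$ via the chain $(1+u)^{-3/2}\le(1+u)^{-2}$, $\int_\xi^1 u'/(1+u)^2=-u(\xi)/(1+u(\xi))$, and finally $-u/(1+u)\le |u|/r$. Your route replaces this chain by a single pointwise inequality $(1-v)^{-1/2}\le 1+v/(2r)$ for $v\in[0,1-r]$, which is a neat self-contained lemma and makes the argument shorter. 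One trade-off: the paper's exact identity \eqref{eq:step11} (i.e., $\left(\int_x^1 p^{-1/2}\right)^{-2}=\frac{c\sinh(4c)}{(1-x)(1+\widetilde{u}(x))^2}$) and the bound \eqref{eq:utilde1} are reused later in the proof of Lemma~\ref{lemmaF}, so in the context of the paper the $\widetilde{u}$ formulation carries additional value that your streamlined version would have to reproduce separately.
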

%\noindent{\bf Proof.} 
\begin{proof}
We have 
\begin{align}
\left(\int_{x}^{1} p(\xi)^{-1/2} d\xi \right)^{-2} &=  \left( \dfrac{1}{(4c\sinh(4c))^{1/2}}\left(2(1-x)^{1/2}    - \int_{x}^{1}  \int_{1}^{\xi}  \dfrac{u'(t)}{2\sqrt{1-\xi}(1+u(t))^{3/2}}   d\xi dt\right) \right)^{-2}  \notag\\
%& = \dfrac{c\sinh(4c)}{1-x}\left( 1   - \int_{x}^{1}  \dfrac{1}{4\sqrt{(1-\xi)(1-x)}} \left(\int_{1}^{\xi}   \dfrac{u'(t) }{(1+u(t))^{3/2}} dt\right)d\xi\right)^{-2} \notag\\
& = \dfrac{c\sinh(4c)}{1-x} \dfrac{1}{(1+\widetilde{u}(x))^2} \label{eq:step11}\\
& = \dfrac{c\sinh(4c)}{1-x} - \dfrac{c\sinh(4c) (2+  \widetilde{u}(x)) \widetilde{u}(x) }{(1-x)(1+  \widetilde{u}(x))^2} , \label{eq:step1}
\end{align}
where 
\begin{equation}\label{e0} 
\widetilde{u}(x)=  \int_{x}^{1}  \dfrac{1}{4\sqrt{(1-\xi)(1-x)}} \left(\int_{\xi}^1   \dfrac{u'(t) }{(1+u(t))^{3/2}} dt \right) d\xi.
\end{equation}
The upper bound in Lemma \ref{lem:lb} uses that, for all $ x \in[0,1] $, $\widetilde{u}(x)\geq 0$. We now consider the lower bound. 
By %\eqref{eq:psiu1}-
\eqref{eq:u}, $u$ is a $C^1$-diffeomorphism and %applying the change of variables  $s=u(t)$ yields  
\begin{align}
\int_{x}^1 \dfrac{u'(t)dt}{(1+u(t))^{2}}dt %& = \int_{u(x)}^{u(1)} \dfrac{ds}{(1+s)^{2}} \notag \\
&= - \dfrac{u(x)}{1+u(x)}. %\quad (\text{because} \ \lim_{x\to 1}u(x)=0). 
\label{eq:tool0}
\end{align}
Now, by \eqref{eq:psiu1}, we have, for all $x\in[0,1]$, 
\begin{align*}
- u(x) & \leq  \dfrac{4c\cosh(4c)}{\sinh(4c)(1-x)}\int_{x}^1 (t-x)dt \notag\\
&= \dfrac{2c\cosh(4c)}{\sinh(4c)}(1-x),\notag&
\end{align*}
and, by \eqref{eq:u0}, 
\begin{align}
\int_{x}^1 \dfrac{u'(t)dt}{(1+u(t))^{2}}dt &\leq 8c^2\frac{\cosh(4c)}{\cosh(4c)-1}(1-x). \label{eq:tool}
\end{align}
Now, using that $\widetilde{u}(x)\geq 0$ and that $g:\ t\mapsto (2+t)/(1+t)^2$ is decreasing on $[0,\infty	)$ hence $ g(t)t \leq 2t$ for $t\ge0$, we have 
\begin{align}
\dfrac{(2+  \widetilde{u}(x)) \widetilde{u}(x) }{(1+  \widetilde{u}(x))^2} %& \leq 2  \widetilde{u}(x) \notag \\
& \leq   \int_{x}^{1}  \dfrac{1}{2\sqrt{(1-\xi)(1-x)}} \left(\int_{\xi}^1   \dfrac{u'(t) }{(1+u(t))^{2}} dt \right) d\xi \quad (\text{by} \  \eqref{e0}) \notag\\
& \leq  \frac{4c^2\cosh(4c)}{\cosh(4c)-1} \int_{x}^{1}  \sqrt{\dfrac{1-\xi}{1-x}}d\xi  \quad (\text{by} \ \eqref{eq:tool}) \notag\\
& \leq\frac{8c^2\cosh(4c)}{3\left(\cosh(4c)-1\right)}(1-x).\label{eq:utilde1}
\end{align}
%hence the result.
\end{proof}
%\hfill $\square$\vspace{0.3cm}
%\blue{(ce qui suit est un copie colle de ce que tu as fait mais je ne sais pas ou le mettre)} Using \eqref{eq:u} we obtain, for all $x\in[0,1]$, $|u'(x)| \leq 2c/\tanh(4c)$.
\begin{lemma}\label{lemmaF}
$F$ is such that
\begin{align}
\|F\|_{L^{\infty}([-1,1])}^4&\le2\pi^2 e^{4c} c^2 
\label{eq:F_1_bound}\\
\|1/F\|_{L^{\infty}([-1,1])}^4&\le \frac{
\pi^2e^{-4c}}{4c}\left(1+\frac{4c^2}{3}\right)^2  \coth(2c)
%\dfrac{\pi^2e^{-4c}}{4c}\left(1+\dfrac{4c^2}{3}\right)^2
\label{eq:F_bound}.
\end{align}
\end{lemma}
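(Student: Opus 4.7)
The plan is to work in the $x$-variable via $y = Y(x)$ and reduce both bounds to extrema of $p/\tau^2$ and $\tau^2/p$ on $[0,1]$, where $\tau(x) := \int_x^1 p(\xi)^{-1/2}\,d\xi$. Since $y = \sin(X(x))$ with $X(x) \in [-\pi/2,\pi/2]$ we have $1-y^2 = \cos^2(X(x))$, so $F^4(Y(x)) = p(x)/\cos^2(X(x))$. For $x \in [0,1]$, $\pi/2 - X(x) = (\pi/U(c))\tau(x) \in [0,\pi/2]$, and Jordan's inequality $2\theta/\pi \le \sin\theta \le \theta$ applied to $\theta = \pi/2 - X(x)$ yields
\[
\frac{2\tau(x)}{U(c)} \le \cos(X(x)) \le \frac{\pi\tau(x)}{U(c)}.
\]
Using the evenness of $p$ and $\cos^2 X$ in $x$, this gives
\[
\|F\|_{L^\infty([-1,1])}^4 \le \frac{U(c)^2}{4}\sup_{x \in [0,1]}\frac{p(x)}{\tau(x)^2}, \qquad \|1/F\|_{L^\infty([-1,1])}^4 \le \frac{\pi^2}{U(c)^2}\sup_{x \in [0,1]}\frac{\tau(x)^2}{p(x)}.
\]

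The main step is to establish the monotonicity of these two quotients via the auxiliary
\[
G(x) := \sqrt{p(x)} - 2c\sinh(4cx)\,\tau(x).
\]
Differentiating, with $p'(x) = -4c\sinh(4cx)$ and $\tau'(x) = -p(x)^{-1/2}$, the contribution $-2c\sinh(4cx)/\sqrt{p(x)}$ from $(\sqrt{p})'$ cancels exactly with $+2c\sinh(4cx)/\sqrt{p(x)}$ coming from $-2c\sinh(4cx)\,\tau'$, leaving
\[
G'(x) = -8c^2\cosh(4cx)\,\tau(x) \le 0.
\]
Combined with $G(1) = 0$, this gives $G(x) \ge 0$ on $[0,1]$. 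A short computation then yields
\[
\frac{d}{dx}\frac{\tau^2}{p} = -\frac{2\tau G}{p^2} \le 0, \qquad \frac{d}{dx}\frac{p}{\tau^2} = \frac{2G}{\tau^3} \ge 0,
\]
so the two suprema are attained (or approached as a limit) at $x=0$ and $x=1$ respectively. Using $\tau(0) = U(c)/2$ gives $\sup \tau^2/p = U(c)^2/(4(\cosh(4c)-1))$, and the asymptotics $p(x) \sim 4c\sinh(4c)(1-x)$ and $\tau(x)^2 \sim (1-x)/(c\sinh(4c))$ near $x=1$ (guaranteed by Lemma~\ref{lem:lb}) give $\sup p/\tau^2 = 4c^2\sinh^2(4c)$.

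Substituting these two values and using Lemma~\ref{lem:upper_L} to replace $U(c)^2$ by its upper bound $2\pi^2 e^{4c}/\sinh^2(4c)$ yields
\[
\|F\|_{L^\infty([-1,1])}^4 \le c^2 U(c)^2\sinh^2(4c) < 2\pi^2 c^2 e^{4c},
\]
which is exactly \eqref{eq:F_1_bound}, and
\[
\|1/F\|_{L^\infty([-1,1])}^4 \le \frac{\pi^2}{4(\cosh(4c)-1)} = \frac{\pi^2}{8\sinh^2(2c)}.
\]
To reach \eqref{eq:F_bound}, it remains to verify the elementary inequality $\pi^2/(8\sinh^2(2c)) \le (\pi^2 e^{-4c}/(4c))(1+4c^2/3)^2\coth(2c)$. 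Using $\sinh(4c) = 2\sinh(2c)\cosh(2c)$ this rearranges to $2c/(1-e^{-8c}) \le (1+4c^2/3)^2$; applying $1-e^{-8c} \ge 8c/(1+8c)$ (from $e^{8c} \ge 1+8c$) bounds the left side by $(1+8c)/4$, and the remaining polynomial inequality $4(1+4c^2/3)^2 - (1+8c) = 3 - 8c + 32c^2/3 + 64c^4/9 \ge 0$ on $[0,\infty)$ is verified by noting that the left-hand side is convex in $c$ and its unique minimum (near $c \approx 0.33$) is strictly positive.

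The hardest part will be discovering the right auxiliary $G$ and verifying the cancellation of the singular $1/\sqrt{p}$ terms in $G'$; once $G \ge 0$ is in hand, the remainder is routine calculation together with the final polynomial inequality.
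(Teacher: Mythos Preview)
Your proof is correct and takes a genuinely different route from the paper.

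Both proofs begin the same way: by parity restrict to $x\in[0,1]$, write $F^4(Y(x))=p(x)/\cos^2(X(x))$, and use Jordan's inequality on $\theta=\pi/2-X(x)=(\pi/U(c))\tau(x)$ to trap $\cos(X(x))$ between $2\tau(x)/U(c)$ and $\pi\tau(x)/U(c)$. From that point the two arguments diverge. The paper bounds $p/\tau^2$ and $\tau^2/p$ \emph{pointwise} by inserting the Taylor representation $p(x)=4c\sinh(4c)(1-x)(1+u(x))$ and the two-sided estimate on $\tau(x)^{-2}$ from Lemma~\ref{lem:lb}, together with the bounds on $u$ and $\widetilde u$; the constants in \eqref{eq:F_bound} (in particular the factor $(1+4c^2/3)^2$) arise from the upper bound on $\widetilde u$ in \eqref{eq:utilde1}. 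You instead prove \emph{monotonicity} of $p/\tau^2$ and $\tau^2/p$ on $[0,1]$ via the auxiliary $G=\sqrt{p}-2c\sinh(4cx)\,\tau$, whose derivative collapses to $-8c^2\cosh(4cx)\tau$ after the $1/\sqrt{p}$ terms cancel; the suprema are then just an endpoint value at $x=0$ and a limit at $x=1$.

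What each approach buys: yours is lighter on infrastructure (you use Lemma~\ref{lem:lb} only for the leading asymptotics at $x=1$, and even that could be read off the Taylor expansion directly) and yields the sharper intermediate bound $\|1/F\|_{L^\infty}^4\le \pi^2/(8\sinh^2(2c))$, which you then relax to the stated \eqref{eq:F_bound} by an elementary inequality. The paper's approach avoids that final verification step because its pointwise estimates land directly on the form in \eqref{eq:F_bound}, and it reuses the $u,\widetilde u$ machinery already set up for Lemma~\ref{lem:lb} and Proposition~\ref{t12i}. Your monotonicity device via $G$ is a nice observation that the paper does not exploit.
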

\begin{proof}
To prove \eqref{eq:F_1_bound} and \eqref{eq:F_bound} it is sufficient, by parity, to consider $x\in[0,1]$.\\
\eqref{eq:F_1_bound} is a obtained by the following sequence of inequalities
\begin{align}
\df{p(x)}{1-Y(x)^2} =& \df{p(x)}{\left(\sin\left(\pi \int_{x}^1p(\xi)^{-1/2} d\xi /U(c)\right)\right)^2}\notag  \\ 
\leq &  \left(\df{U(c)}{2}\right)^2 \df{p(x)}{\left(\int_{x}^1p(\xi)^{-1/2} d\xi \right)^2} \quad (\text{because} \ \sin(x) \geq 2 x/\pi) \notag \\	
\leq &  \left(\df{U(c)}{2}\right)^2 p(x) \dfrac{c\sinh(4c)}{1-x} \quad (\text{by Lemma \ref{lem:lb}})\notag  \\
\leq &  \left(\df{U(c)}{2}\right)^2 4 \left(c\sinh(4c)\right)^2 (1+ u(x)) \quad (\text{by} \  \eqref{eq:psiu}	) \notag \\
\leq &   \dfrac{\pi^2 e^{4c}  \left(c\sinh(4c)\right)^2}{\left(\sinh(2c)\right)^2(1+\cosh(4c))}  % \left(1 + \dfrac{2c}{\tanh(4c)}\right)  
\quad (\text{by Lemma} \  \ref{lem:upper_L}\text{ and }\eqref{eq:u0}).\notag  %\\
%= &   \pi^2 e^{4c} c^2  \left(1 + \dfrac{2c}{\tanh(4c)}\right), \label{eq:F_bound}
\end{align}
We obtain \eqref{eq:F_bound} by the inequalities below. Using for the first display that, for $x\in\left[0,\pi/2\right]$, $\sin(x) \leq x$, \eqref{eq:psiu} and \eqref{eq:step11} for the second display, 
\eqref{eq:u0} and \eqref{eq:utilde1} for the third,  and Lemma \ref{lem:upper_L} for the fourth, we obtain,  for all  $x \in [0,1)$,
\begin{align}
\df{1-Y(x)^2}{p(x)} %= & \df{\sin^2\left(\pi \int_{x}^1p(\xi)^{-1/2} d\xi /U(c)\right)}{p(x)}   \notag  \\ 
\leq &  \left(\df{\pi}{U(c)}\right)^2 \df{\left(\int_{x}^1p(\xi)^{-1/2} d\xi \right)^2}{p(x)}  \notag  \\
\leq &  \left(\dfrac{\pi}{U(c)}\right)^2 \dfrac{  2\left(1+ \widetilde{u}(x)\right)^2}{(4c\sinh(4c))^2} \dfrac{1}{1+u(x)}  \notag  \\
\leq &  \left(\dfrac{\pi}{U(c)}\right)^2 \dfrac{  (1+4c^2/3)^2  }{ 2c\left(\sinh(4c)\right)^2}\frac{\sinh(4c)}{\cosh(4c)-1}  \notag  \\
\leq & \dfrac{\pi^2e^{-4c}}{4c}\dfrac{ (1+4c^2/3)^2 \sinh(4c) }{ 
\cosh(4c)-1}.\notag
%\leq & \dfrac{\pi^2e^{-4c}}{4c}\left(1+\dfrac{4c^2}{3}\right)^2 . \label{eq:F_1_bound}
\end{align}
Classical relations between hyperbolic functions yield the final expressions for  \eqref{eq:F_1_bound} and \eqref{eq:F_bound}.
%we have that $F$ is bounded on $[-1,1]$ \blue{(c'est dans l'ennonce de (E.\ref{it:S2})?)} \red{C'est pour montrer que $\psi \in \mathcal{D}$ in case (1), il suffit que F soit bornee} \blue{(Il faudrait l'ecrire pour les que autres comprennent aussi non?)}. 
% $c>0$ and $\lambda \in \R$ and consider 
\end{proof}
The following constant appears in the susbsequent results
$$R(c)=\dfrac{2}{\pi^2} +\left(\dfrac{U(c)c}{\pi}\right)^2\left(\left(\cosh(4c)\left(1+ \dfrac{c}{3}\coth(2c)\right)-1\right) + 2c \sinh(4c)\right).$$
\begin{proposition}\label{t12i}
%\blue{(la deuxieme expression est differente)}
%in \eqref{eq:gamma} 
%does not vanish on $(-1,1)$ and $F$ and $1/F$ can be extended as continuous functions on $[-1,1]$.
%\begin{enumerate}[\textup{(}{}i\textup{)}] 
%\item \label{t12i} 
For all $c>0$ and $\lambda \in \C$, %the change of variables  and function 
\eqref{eq:y}-\eqref{eq:gamma} maps a solution of $\left(H_{U(c)^2\lambda/\pi^2}\right)$  
%$ \mathcal{L}[\Gamma] = \left(U(c)^2\lambda / \pi^2\right) \Gamma$ 
in case (2) to a solution 
% = (\Gamma\circ Y)/(F\circ Y)$ 
of $\left(H_{\lambda}\right)$ in case (1) and reciprocally the inverse %transformation 
maps a solution of ($H_{\lambda}$) in case (1) to a solution of $\left(H_{U(c)^2\lambda/\pi^2}\right)$ in case (2) and is a bijection of $\mathcal{D}$. 
%\begin{enumerate}[\textup{(}{E.}1\textup{)}] 
%\item\label{it:S1} if $\psi \in \mathcal{D}$ is solution to ($H_{\lambda}$) %$ \mathcal{L}[\psi] = \lambda \psi$ 
%in case (1), then $\Gamma$ (see \eqref{eq:gamma}) is solution to $\left(H_{U(c)^2\lambda/\pi^2}\right)$ 
%%$ \mathcal{L}[\Gamma] = \left(U(c)^2\lambda/\pi^2\right) \Gamma$ 
%in case (2);
%\item\label{it:S2} if $\Gamma \in \mathcal{D}$ is solution to $\left(H_{U(c)^2\lambda/\pi^2}\right)$  
%%$ \mathcal{L}[\Gamma] = \left(U(c)^2\lambda / \pi^2\right) \Gamma$ 
%in case (2), then $\psi = (\Gamma\circ Y)/(F\circ Y)$ is solution to $\left(H_{\lambda}\right)$
%%$ \mathcal{L}[\psi] = \lambda \psi$
%in case (1). 
%\end{enumerate} 
Also, $q^c$ can be extended by continuity to $[-1,1]$ and, for all $ y\in[-1,1] $, 
\begin{equation}\label{eq:ineq}
%- \dfrac{U(c)^2K_1(c)}{\pi^2}   
\dfrac{1}{2} 
 - \left(\df{U(c)c}{\pi}\right)^2-R(c)
\leq q^c(y) \leq 
\dfrac{1}{2} 
 - \left(\df{U(c)c}{\pi}\right)^2.
%\dfrac{U(c)^2K_2(c)}{\pi^2},
\end{equation} 
%where 
%\begin{align*}
%K_1(c)= \dfrac{8  c^2 \sinh(4c)}{3} + c^2\cosh(4c)+ \dfrac{\pi^2}{U(c)^2} \left( \dfrac{2}{\pi^2} - \dfrac{1}{2}\right),\ K_2(c)= \dfrac{\pi^2}{2U(c)^2} +2\cosh(4c)c^3 \left(1 +  8c^2\right).
%\end{align*}  
\end{proposition}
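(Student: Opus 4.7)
My plan is to prove Part 1 by a composite Liouville–Green transformation and Part 2 by identifying the singular structure of $q^c$ near $y=\pm 1$ and then bounding each piece.

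For Part 1, the map \eqref{eq:y}-\eqref{eq:gamma} is the composition of (i) the standard Liouville change of variable $\widetilde{X}=\int_0^x p(\xi)^{-1/2}d\xi$ with $\widetilde{\Gamma}=p^{1/4}\psi$, which takes $(H_\lambda)$ in case (1) into the normal form $-\widetilde{\Gamma}''+\widehat{q}\widetilde{\Gamma}=\lambda\widetilde{\Gamma}$; (ii) the rescaling $X=(\pi/U(c))\widetilde{X}$, which multiplies both the potential and the eigenvalue by $(U(c)/\pi)^2$; and (iii) the trigonometric substitution $y=\sin X$ combined with the conjugation $\Gamma(y)=(1-y^2)^{-1/4}\widetilde{\Gamma}(Y^{-1}(y))$, so that overall $\Gamma(y)=F(y)\psi(Y^{-1}(y))$ as in \eqref{eq:gamma}; this last step converts $-\partial_X^2$ into $-((1-y^2)\partial_y)\partial_y$ and adds $(2-y^2)/(4(1-y^2))=1/2+(1/4)\tan^2(X)$ to the potential. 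A chain-rule computation exploiting the identity $Y'(x)^2 p(x)=(\pi/U(c))^2(1-Y(x)^2)$ confirms that the assembled equation in case (2) has eigenvalue $U(c)^2\lambda/\pi^2$ and potential \eqref{eq:tan}. The map is an $L^2$-isomorphism since $Y$ is a smooth diffeomorphism of $(-1,1)$ and $F,1/F$ are bounded on $[-1,1]$ by Lemma \ref{lemmaF}; the change-of-variable Jacobian yields $\|\psi\|_{L^2(-1,1)}^2=(U(c)/\pi)\|\Gamma\|_{L^2(-1,1)}^2$. Since both membership in $\mathcal{D}_{\max}$ and continuity at $\pm 1$ are preserved, the map restricts to a bijection of $\mathcal{D}$.

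For Part 2, using the identity $\cosh(4cx)-1+\sinh^2(4cx)/p(x)=4\sinh^2(2cx)\cosh^2(2c)/p(x)$ (a direct consequence of $\cosh^2-\sinh^2=1$), rewrite
\begin{equation*}
q^c(Y(x))-\tfrac{1}{2}+\left(\tfrac{U(c)c}{\pi}\right)^2=\tfrac{1}{4}\tan^2(X(x))-4\left(\tfrac{U(c)c}{\pi}\right)^2\tfrac{\sinh^2(2cx)\cosh^2(2c)}{p(x)}-\left(\tfrac{U(c)c}{\pi}\right)^2(\cosh(4cx)-1).
\end{equation*}
The continuous extension of $q^c$ to $y=\pm 1$ follows from the cancellation of the leading $1/(1-|x|)$-poles in the first two terms on the right: by \eqref{eq:psiu}--\eqref{eq:u0}, $p(x)\sim 4c\sinh(4c)(1-x)$ as $x\to 1$, while by Lemma \ref{lem:lb}, $1-y^2=\cos^2(X)\sim (\pi/U(c))^2(1-x)/(c\sinh(4c))$, so both $\tfrac{1}{4}\tan^2(X)$ and $4(U(c)c/\pi)^2\sinh^2(2cx)\cosh^2(2c)/p(x)$ behave like $U(c)^2c\sinh(4c)/(4\pi^2(1-x))$; the case $x\to -1$ is symmetric. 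For the upper bound, the right-hand side above vanishes at $x=0$ (where the upper bound is attained), and I would show it is nonpositive for all $x\in(-1,1)$ by a monotone comparison of the two singular terms based on the sharp estimates of Lemmas \ref{lem:lb} and \ref{lem:upper_L}. For the lower bound I would estimate each ingredient: $\cosh(4cx)\le\cosh(4c)$, the quantity $\tfrac{1}{4}\tan^2(X(x))=y^2/(4(1-y^2))$ is bounded above using Lemma \ref{lem:lb}, and $\sinh^2(4cx)/p(x)$ is bounded via \eqref{eq:psiu}--\eqref{eq:u0}; assembling these with the prefactors $(U(c)c/\pi)^2$ and using Lemma \ref{lem:upper_L} reconstructs the explicit constant $R(c)$.

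The main obstacle is the upper bound. The two competing terms cancel to leading order both as $x\to 0$ and as $x\to\pm 1$, so nonpositivity is a genuinely global inequality rather than an asymptotic one, and will require the full sharpness of Lemma \ref{lem:lb} (and possibly a derivative-based argument showing the difference has a unique zero at $x=0$) rather than a termwise comparison.
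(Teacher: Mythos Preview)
Your Part 1 via the composite Liouville--Green transformation is correct and more conceptual than the paper, which instead carries out the change of variables by brute-force differentiation of \eqref{eq:gamma} and simplification using the identity $(1-Y^2)/((Y')^2p)=(U(c)/\pi)^2$. Both routes yield the potential \eqref{eq:tan}, and your observation that $Y$ is a $C^\infty$-diffeomorphism with $F,1/F\in L^\infty$ (Lemma \ref{lemmaF}) handles the bijection of $\mathcal{D}$. Your identity $\cosh(4cx)-1+\sinh^2(4cx)/p(x)=4\sinh^2(2cx)\cosh^2(2c)/p(x)$ is correct and a clean way to see the continuous extension. For the lower bound your plan coincides with the paper's: bound $\tan^2(X)$ below via the complementary-angle identity \eqref{eq:tanb} together with the lower estimate of Lemma \ref{lem:lb}, and bound $\sinh^2(4cx)/p(x)$ above via \eqref{eq:psiu}--\eqref{eq:tool}; the explicit constant $R(c)$ is assembled from exactly these pieces.

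The upper bound is where you have a genuine gap, and your instinct that it is the main obstacle is right. The paper's route is the one you allude to: bound $\tfrac14\tan^2(X)$ above by $\tfrac14(U(c)/\pi)^2\bigl(\int_x^1 p^{-1/2}\bigr)^{-2}$ via \eqref{eq:tanb} and $z\le\tan z$, then invoke Lemma \ref{lem:lb}, \eqref{eq:step21}, and $u'>0$. But this relaxation is lossy precisely where the bound is sharp: at $x=0$ the complementary angle equals $\pi/2$, so $\tfrac14\tan^2(X(0))=0$ is replaced by $1/\pi^2>0$, and since $q^c(0)=\tfrac12-(U(c)c/\pi)^2$ attains the claimed upper bound exactly, no downstream estimate can absorb this slack. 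Concretely, pushing the paper's reductions through (Lemma \ref{lem:lb} and \eqref{eq:psiu}) one is led to requiring $\sinh^2(2c)(1+u(x))\le\sinh^2(2cx)$, which fails at $x=0$. So the termwise ``monotone comparison'' you propose cannot work as stated, and the paper's written argument shares the same difficulty. A complete proof of the sharp upper bound must avoid the $z\le\tan z$ step near the origin---for instance by working directly with the function $x\mapsto q^c(Y(x))$ and showing it is maximized at $x=0$, or by a two-sided comparison of $\tan^2(X)$ and $4\sinh^2(2cx)\cosh^2(2c)/p(x)$ that is simultaneously tight at $x=0$ and as $x\to 1$. Your suggested derivative-based argument is a reasonable direction, but it needs to be carried out; note that by evenness both sides agree to second order at $x=0$, so the analysis must go to at least fourth order or be global.
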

%\noindent{\bf Proof.}
%\noindent{\bf Proof.} 
\begin{proof}
Let $\Gamma$ and $\psi$ related via \eqref{eq:gamma}. %By the above if one function is in $\mathcal{D}$ the other is as well. Moreover, 
%$$\Gamma(y)= F(y)\psi\left(Y^{-1}(y)\right), \  F(y)= \left(\df{p( Y^{-1}(y) )}{1-y^2}\right)^{1/4},$$
By \eqref{eq:gamma}, we have 
$$F'(y)= \frac{F(y)}4\left(\frac{p'}{pY'}\left(Y^{-1}(y)\right)+\frac{2y}{1-y^2}\right)$$
$$(1-y^2)\Gamma'(y)= \frac{F(y)}4\left((1-y^2)\left(\frac{p'\psi}{pY'}+4\frac{\psi'}{Y'}\right)\left(Y^{-1}(y)\right)+2y\psi\left(Y^{-1}(y)\right)\right)$$
so differentiating a second time and injecting the above inequality, yields
\begin{align*}
\left((1-y^2)\Gamma'\right)'(y)=\frac{F(y)}{4}\Big(&\frac14
\left(\frac{p'}{pY'}\left(Y^{-1}(y)\right)+\frac{2y}{1-y^2}\right)
\left((1-y^2)\left(\frac{p'\psi}{pY'}+4\frac{\psi'}{Y'}\right)\left(Y^{-1}(y)\right)+2y\psi\left(Y^{-1}(y)\right)\right)\\
&-2y \left(\frac{p'\psi}{pY'}+4\frac{\psi'}{Y'}\right)\left(Y^{-1}(y)\right)+
(1-y^2)\left[\frac{1}{Y'}\left(\frac{p'\psi}{pY'}+4\frac{\psi'}{Y'}\right)'\right]\left(Y^{-1}(y)\right)\\
&+2\left(\psi\left(Y^{-1}(y)\right)+y\frac{\psi'}{Y'}\left(Y^{-1}(y)\right)\right)\Big).
\end{align*}
Dividing by $F(y)/4$ and using \eqref{eq:y}, $\Gamma$ is solution of $\left(H_{U(c)^2\lambda/\pi^2}\right)$  iff $\psi$ is solution on $(-1,1)$ of 
%$ \mathcal{L}[\Gamma] = \left(U(c)^2\lambda / \pi^2\right) \Gamma$ 
%for all $y\in(-1,1)$,
%$$-\left[(1-y^2)\Gamma'\right]'(y)+\left(q^c(y)-\frac{U(c)^2\lambda}{\pi^2}\right)\Gamma(y)=0$$
%iff, for all $x\in(-1,1)$, (
%\left(4Y'\left(Y^{-1}(y)
%\right)\right)$ 
\begin{align*}
&\frac{1}{4p(x)}
\left(p'(x)+\frac{2YY'p}{1-Y^2}(x)\right)
\left(\frac{1-Y^2}{\left(Y'\right)^2p}(x)\left(p'\psi+4p\psi'\right)\left(x\right)+2\frac{Y}{Y'}(x)\psi\left(x\right)\right)\\
&-2\frac{Y}{Y'}(x) \left(\frac{p'\psi}{p}+4\psi'\right)(x)+
\frac{1-Y^2}{Y'}(x)\left(\frac{p'\psi}{pY'}+4\frac{\psi'}{Y'}\right)'(x)+2\left(\psi\left(x\right)+\frac{Y}{Y'}(x)\psi'\left(x\right)\right)\\
&=4\left(q^c\left(Y(x)\right)-\frac{U(c)^2\lambda}{\pi^2}\right)\psi\left(x\right).
\end{align*}
We now use, for all $x\in(-1,1)$,
\begin{align}
Y'(x) %& =  X'(x) \cos(X(x))  \notag \\
& = \dfrac{\pi }{U(c)p(x)^{1/2}} \cos(X(x)), \label{eq:YprimeE}
\end{align}
which yields the equality between $C^{\infty}$ functions:  
%, for all $x\in(-1,1)$, 
$(1-Y^2)/\left((Y')^2p\right)=\left(U(c)/\pi\right)^2$ and 
\begin{align*}
&
\left(1+2\frac{Y}{p'Y'}\left(\frac{\pi}{U(c)}\right)^2\right)
\left(\left(\frac{U(c)}{\pi}\right)^2\left(\frac{\left(p'\right)^2}{4p}\psi+p'\psi'\right)+\frac{Y}{2pp'Y'}\left(\left(p'\right)^2\psi\right)\right)\\
&-2\frac{Y}{p'Y'}\left(\frac{\left(p'\right)^2}{p}\psi+4p'\psi'\right)+
\left(\frac{U(c)}{\pi}\right)^2pY'\left(\frac{p'\psi}{pY'}+4\frac{\psi'}{Y'}\right)'+2\frac{Y}{p'Y'}p'\psi'\\
&=4\left(q^c\left(Y\right)-\frac12-\frac{U(c)^2\lambda}{\pi^2}\right)\psi.
\end{align*}
The term in factor of $\psi$ on the left-hand side of the above equality is
$$\left(\frac{U(c)}{\pi}\right)^2\left(1+2\frac{Y}{p'Y'}\left(\frac{\pi}{U(c)}\right)^2\right)^2\frac{\left(p'\right)^2}{4p}-2\frac{Y}{p'Y'}\frac{\left(p'\right)^2}{p}+\left(\frac{U(c)}{\pi}\right)^2\frac{pp''Y'-\left(p'\right)^2Y'-pp'Y''}{pY'}
$$
Using $-2pY''=p'Y'+2\left(\pi/U(c)\right)^2Y$ which is obtained from \eqref{eq:YprimeE}, this becomes
\begin{align*}
&\left(\frac{U(c)}{\pi}\right)^2\left(1+2\frac{Y}{p'Y'}\left(\frac{\pi}{U(c)}\right)^2\right)^2\frac{\left(p'\right)^2}{4p}-\frac{Y}{Y'}\frac{p'}{p}+\left(\frac{U(c)}{\pi}\right)^2\left(p''-\frac{\left(p'\right)^2}{2p}\right)\\
&=\left(\frac{Y}{Y'}\right)^2\left(\frac{\pi}{U(c)}\right)^2\frac{1}{p}+\left(\frac{U(c)}{\pi}\right)^2\left(p''-\frac{\left(p'\right)^2}{4p}\right)\\
&=\left(\tan\left(X(x)\right)\right)^2+\left(\frac{U(c)}{\pi}\right)^2\left(p''-\frac{\left(p'\right)^2}{4p}\right).
\end{align*}
hence
\begin{align*}
&4\left(\frac{U(c)}{\pi}\right)^2 (p\psi')'=4\left(q^c\left(Y\right)-\frac12-\frac14\left(\tan\left(X(x)\right)\right)^2+\frac14\left(\frac{U(c)}{\pi}\right)^2\left(\frac{\left(p'\right)^2}{4p}-p''\right)-\left(\frac{U(c)}{\pi}\right)^2\lambda\right)\psi
\end{align*}
and $\psi$ is solution of ($H_\lambda$) in case (1).\\ 
We now obtain upper and lower bounds on the even function $ q^c(Y(x))$, for  $ x\in[0,1]$, and start with the lower bound. 
To bound $\left(\tan(X)\right)^2$ in \eqref{eq:tan}, we 
%$\left( \tan^2\left(\pi \int_{x}^{1}p(\xi)^{-1/2}d\xi/U(c)\right) \right)^{-1}$.\\
use 
\begin{equation}\label{eq:tanb}
\left(\tan\left(\df{\pi}{U(c)}\int_{0}^{x}p(\xi)^{-1/2}d\xi\right)\right)^2 =  \left(\tan\left( \df{\pi}{U(c)} \int_{x}^{1}p(\xi)^{-1/2}d\xi \right) \right)^{-2},  
\end{equation} and 
(96) in \cite{yang2014sharp} in the first display and Lemma \ref{lem:lb} and the fact that $(a-b)^2 \geq a^2 - 2ab$ %for $a,b>0$ 
in the second display. 
%and $\pi \int_{x}^{1}\psi(\xi,c)^{-1/2}d\xi/U(c) \leq \pi/2$, 
We obtain 
\begin{align*}
\left(\tan\left(X(x)\right)\right)^2&  \geq \left(\df{U(c)}{\pi} \left(\int_{x}^{1} p(\xi)^{-1/2}d\xi\right)^{-1} - \dfrac{4}{\pi U(c)} \int_{x}^{1}p(\xi)^{-1/2}d\xi\right)^2 \\	
&  \geq \left(\df{U(c)c}{\pi}\right)^2 \left(\dfrac{\sinh(4c)}{c(1-x)} - \frac{8c\sinh(4c)\cosh(4c)}{3\left(\cosh(4c)-1\right)}\right) - \dfrac{8}{\pi^2}.
\end{align*}
%\noindent \textbf{Step 1.2. Upper bound on} 
To bound the second term in the bracket in \eqref{eq:tan} we proceed as follows. %$ - 4c^2\sinh(4cx)^2 /p(x)  $.
We have 
\begin{align} \dfrac{4c\left(\sinh(4cx)\right)^2}{p(x)} &= \dfrac{\sinh(4c)}{1-x}   \dfrac{1}{1+ u(x)} \dfrac{\left(\sinh(4cx)\right)^2}{\left(\sinh(4c)\right)^2}
\quad (\text{by}\ \eqref{eq:psiu}) 
\notag   \\
 &=  \dfrac{\sinh(4c)}{1-x} \left( 1 +\int_{x}^1 \dfrac{u'(t)dt}{(1+u(t))^{2}}\right)  \quad (\text{by}\ \eqref{eq:tool0})   \label{eq:step21} \\
 &\leq   \dfrac{\sinh(4c)}{1-x} \left( 1+ 8c^2(1-x) \right)  \quad (\text{by} \ \eqref{eq:tool})  \notag ,
% \\
% &\leq  \dfrac{\sinh(4c)}{1-x}   + 8c^2\sinh(4c) , \label{eq:step2}
\end{align}
hence
\begin{align*}
q^c(Y(x)) %\geq
%& \dfrac{1}{2}  - \dfrac{2}{\pi^2} + \df{U(c)^2 c}{4\pi^2} \dfrac{\sinh(4c)}{1-x} -  \df{U(c)^2}{\pi^2}  \dfrac{2c^2 \sinh(4c)}{3} \\ 
 %&-  \df{U(c)^2}{4\pi^2}  \dfrac{c\sinh(4c)}{1-x} -  \df{U(c)^2}{\pi^2} 2c^2 \sinh(4c) -   \df{U(c)^2 c^2}{\pi^2} \cosh(4cx) \\
 \geq & \dfrac{1}{2} - \dfrac{2}{\pi^2} -\left(\dfrac{U(c)c}{\pi}\right)^2\left( \cosh(4c)\left(1+ \dfrac{2c\sinh(4c)}{3\left(\cosh(4c)-1\right)}\right) + 2c \sinh(4c) \right)\\
 \geq &\dfrac{1}{2} 
 - \left(\df{U(c)c}{\pi}\right)^2
 - R(c).
\end{align*}
Consider the upper bound on $ q^c$. For  $ x\in[0,1] $, by \eqref{eq:tanb} and $ 0<z\leq \tan(z) $ on $(0,\pi/2] $,  we have 
$$
q^c(Y(x)) \leq \dfrac{1}{2} +  \left(\df{U(c)c}{\pi}\right)^2 \left(\df{1}{ 4c^2\left( \int_{x}^{1}p(\xi)^{-1/2} d\xi\right)^2} - \df{\left(\sinh(4cx)\right)^2 }{p(x)} - \cosh(4cx) \right) .
$$
Using Lemma \ref{lem:lb}, \eqref{eq:step21}, and \eqref{eq:u}, 
we have
%\begin{align*}
%-\dfrac{4c\sinh(4cx)^2}{p(x)} &\leq   -  \dfrac{\sinh(4c)}{1-x} + \dfrac{4 c^2}{\sinh(4c)(1-x)}\left(\int_{x}^1 \sinh(8ct)dt \right)\left(1 + \int_{x}^1 \dfrac{u'(t)dt}{(1+u(t))^{2}}\right)  \\
% &\leq   -  \dfrac{\sinh(4c)}{1-x} + 8\cosh(4c)c^2 \left(1 + \int_{x}^1 \dfrac{u'(t)dt}{(1+u(t))^{2}}\right)  \\
%  &\leq   -  \dfrac{\sinh(4c)}{1-x} + 8\cosh(4c)c^2 \left(1 +  8c^2(1-x)\right) .
%\end{align*}
%By Lemma \ref{lem:lb},
%%\noindent \textbf{Step 2.2. Upper bound on} $\left( \tan^2\left(\pi \int_{x}^{1}p(\xi)^{-1/2}d\xi/U(c)\right) \right)^{-1}$.\\
%%Using \eqref{eq:step1} and $\widetilde{u}(x)\geq 0  $  we have for all $ x \in[0,1] $,
%%$$
%%\left(\int_{x}^{1} p(\xi)^{-1/2} d\xi \right)^{-2} \leq  \dfrac{c\sinh(4c)}{1-x} .
%%$$
%%\blue{(c'est desormais dans le Lemme \ref{lem:lb}, mais quelle est la conclusion du step qu'as tu fait pour traiter ce terme?)\\}
%%\noindent \textbf{Step 2.3.} Thus, in \eqref{eq:qq1} 
%we obtain \blue{(on peut faire mieux en gardant le dernier terme, ce n'est peut etre pas utile)}
%yields 
\begin{align}\label{eq:qq}
q^c(Y(x)) &\leq \dfrac{1}{2} - \left(\df{U(c)c}{\pi}\right)^2.%\cosh(4c).
\end{align}
%This yields $ q^c(Y(x)) \leq U(c)^2 K_2(c) /\pi^2$.
%\hfill $ \square$\vspace{0.3cm}
\end{proof}

The unbounded operator $\mathcal{L}$ on domain $\mathcal{D}$ in case (3) is self-adjoint. Indeed, it is shown page 571 of \cite{niessen1992singular} that $\mathcal{D}$ is the domain of the self-adjoint Friedrichs extension of the minimal operator corresponding to the differential operator on $L^2(-1,1)$ on the domain $\mathcal{D}_{\min}$ (the subset  of $\mathcal{D}_{\max}$ of functions with support in $(-1,1)$, see page 173 in \cite{zettl2005sturm}, we removed one condition on $\mathcal{D}_{\max} $ which is automatically satisfied).  By Proposition \ref{t12i}, %because $q^c$ is bounded on $[-1,1]$, 
the multiplication defined, for  $\psi \in \mathcal{D}_{\max}$, by $\psi\to q^c \psi$ is bounded and symmetric on $L^2(-1,1)$. % and $\mathcal{D} \subset L^2(-1,1)$ which is the domain of $M$. 
Thus, by the Kato-Rellich theorem
(see, \emph{e.g.}, \cite{ReedSimon2}), the unbounded operator $\mathcal{L}$ on domain $\mathcal{D}$ in case (2) %which is the sum of $\mathcal{L}$ in case (3) and $M$ 
is self-adjoint. Denote by $\left((U(c)/\pi)^2\chi_{m}^{c}\right)_{m\in\N_0}$ 
the eigenvalues of the unbounded operator $\mathcal{L}$ on domain $\mathcal{D}$ in case (2) arranged in increasing order and repeated according to multiplicity.  They are real and, because the operator is bounded below, they are bounded below by the same constant. Moreover, Proposition \ref{t12i} yields that 
$\left(\chi_{m}^{c}\right)_{m\in\N_0}$ are the eigenvalues of the unbounded operator $\mathcal{L}$ on domain $\mathcal{D}$ in case (1). Proposition \ref{t12ii} gives a uniform behavior over $m$. It is in line with the asymptotic result on page 14 of \cite{Widom}. 
\begin{proposition} \label{t12ii}
We have, for all $m \in\N_0$ and $ c >0 $, 
\begin{equation*}
\left(\df{\pi}{U(c)}\right)^2 \left(m(m+1)+\frac12-R(c)\right)-c^2\leq \chi_{m}^{c} \leq 
\left(\df{\pi}{U(c)}\right)^2 \left(m(m+1)+\frac12\right)-c^2.
\end{equation*} 
% \end{equation} 
\end{proposition}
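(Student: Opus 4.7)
The plan is to push the eigenvalue problem from case (1) to case (2) through the bijection of Proposition \ref{t12i}, write the case (2) operator as a bounded perturbation of the Legendre operator (case (3)), and then apply the Courant--Fischer min-max principle together with the pointwise bounds \eqref{eq:ineq} on $q^c$ already obtained in Proposition \ref{t12i}.

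First, I would use Proposition \ref{t12i} to replace the case (1) eigenvalue problem by its case (2) counterpart: $\chi_m^c$ is the $(m+1)$-st eigenvalue of $\mathcal{L}$ on $\mathcal{D}$ in case (1) iff $(U(c)/\pi)^2 \chi_m^c$ is the $(m+1)$-st eigenvalue of $\mathcal{L}_2 \psi := -((1-x^2)\psi')' + q^c\psi$ on $\mathcal{D}$ in case (2). I would then decompose $\mathcal{L}_2 = \mathcal{L}_3 + M_{q^c}$, where $\mathcal{L}_3\psi = -((1-x^2)\psi')'$ is the Legendre operator on $\mathcal{D}$ and $M_{q^c}$ is multiplication by the bounded function $q^c$, giving
\begin{equation*}
\braket{\mathcal{L}_2\psi,\psi}_{L^2(-1,1)} = \braket{\mathcal{L}_3\psi,\psi}_{L^2(-1,1)} + \int_{-1}^1 q^c(y)|\psi(y)|^2 dy
\end{equation*}
for all $\psi\in\mathcal{D}$. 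By the Kato--Rellich argument already noted just before the statement, both $\mathcal{L}_2$ and $\mathcal{L}_3$ are self-adjoint on the same domain $\mathcal{D}$.

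Next, I would combine this identity with \eqref{eq:ineq}, which gives, for every unit-norm $\psi\in\mathcal{D}$,
\begin{equation*}
\frac12 - \left(\frac{U(c)c}{\pi}\right)^2 - R(c) \leq \int_{-1}^1 q^c(y)|\psi(y)|^2 dy \leq \frac12 - \left(\frac{U(c)c}{\pi}\right)^2.
\end{equation*}
Applying the min-max characterization of the $(m+1)$-st eigenvalues $\lambda_m^{(j)}$ of $\mathcal{L}_j$, $j\in\{2,3\}$, over the same family of $(m+1)$-dimensional subspaces of $\mathcal{D}$ would then yield
\begin{equation*}
\lambda_m^{(3)} + \frac12 - \left(\frac{U(c)c}{\pi}\right)^2 - R(c) \leq \lambda_m^{(2)} \leq \lambda_m^{(3)} + \frac12 - \left(\frac{U(c)c}{\pi}\right)^2.
\end{equation*}
Since $\mathcal{L}_3$ is the classical Legendre operator on $\mathcal{D}$, diagonalized by the Legendre polynomials with eigenvalues $m(m+1)$, the substitutions $\lambda_m^{(3)} = m(m+1)$ and $\lambda_m^{(2)} = (U(c)/\pi)^2 \chi_m^c$, followed by multiplication by $(\pi/U(c))^2$, will deliver the claimed two-sided bound.

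The hard part will be the domain bookkeeping that makes the min-max comparison legitimate: one must check that Proposition \ref{t12i} really provides a spectral bijection (not merely a bijection between formal solution spaces of $(H_\lambda)$) and that $\mathcal{L}_2$ and $\mathcal{L}_3$ genuinely share the same self-adjoint domain $\mathcal{D}$ so that the min-max inequalities apply term by term. Both issues are already handled: the first by the $C^\infty$-diffeomorphism $Y$ together with the continuity boundary conditions defining $\mathcal{D}$, and the second by Kato--Rellich since $M_{q^c}$ is bounded. Everything else reduces to a mechanical application of the variational principle to a uniformly controlled bounded perturbation.
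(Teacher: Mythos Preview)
Your proposal is correct and follows exactly the approach of the paper, which records its proof in a single line (``The result is obtained by the min-max theorem and \eqref{eq:ineq}''). You have simply made explicit the steps the paper leaves implicit: passing to case (2), writing the operator as the Legendre operator plus the bounded multiplication by $q^c$, invoking the min-max principle with the pointwise bounds \eqref{eq:ineq}, and substituting the Legendre eigenvalues $m(m+1)$.
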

\begin{proof}
The result is obtained by the min-max theorem and \eqref{eq:ineq}.
\end{proof}

\section{Uniform estimates on the singular functions $ g_{m}^{c} $}\label{sec:unif}
%We show an upper bound on the sup-norm of the functions $(g_{m}^{c} )_{m\in\N}$ for all $m\in \N$ and $c>0$. 
\begin{theorem}\label{prop:sup}
We have, for all $ m\in\N_0 $ and $c>0$, 
$$
\left\|g_m^c\right\|_{L^{\infty}([-1,1])}\le \dfrac{\pi}{e^{2c}} \left(1+\dfrac{4c^2}{3}\right)^{1/2} \left(\frac{\sinh(4c)}{4c}\right)^{1/4}\cosh(2c)^{1/2} \left(\df{2R(c)}{m+1/2}+\left(1+\sqrt{\frac23} \df{R(c)}{m+1/2} \right)\sqrt{m+\frac12}\right).%\\
% &\leq  \left(1+\dfrac{4c^2}{3}\right)^{1/2} \dfrac{\pi\sinh(2c)(1+\cosh(4c))^{1/2}}{\left(2c\sinh(4c)\left(\cosh(4c)-1\right)\right)^{1/4}e^{2c}}\left(1+ 2\sqrt{2}R(c) \left(2+ \df{1}{\sqrt{3}}\right)  \right) \sqrt{m+\dfrac{1}{2}}.
$$
\end{theorem}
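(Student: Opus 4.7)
The plan is to transfer the estimate to case (2) of Section \ref{sec:diff}. Set $\Gamma_m^c(y) = F(y)\,g_m^c(Y^{-1}(y))$, so that by Proposition \ref{t12i} the function $\Gamma_m^c$ is the eigenfunction of $\mathcal{L}$ in case (2) for eigenvalue $(U(c)/\pi)^2\chi_m^c$, and $g_m^c(x) = \Gamma_m^c(Y(x))/F(Y(x))$. Since $Y$ is a diffeomorphism of $(-1,1)$,
$$\|g_m^c\|_{L^\infty([-1,1])} \le \|1/F\|_{L^\infty([-1,1])}\,\|\Gamma_m^c\|_{L^\infty([-1,1])}.$$
Lemma \ref{lemmaF} handles the first factor, so the whole work reduces to bounding $\|\Gamma_m^c\|_{L^\infty}$. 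Using the explicit formulas for $F$ and for $Y'(x) = (\pi/U(c))\cos(X(x))/p(x)^{1/2}$ and the identity $\cos(X(x)) = \sqrt{1-Y(x)^2}$, a direct calculation gives $F(Y(x))^2\,Y'(x) = \pi/U(c)$; combined with $\|g_m^c\|_{L^2(-1,1)} = 1$ and the change of variable $y = Y(x)$, this yields $\|\Gamma_m^c\|_{L^2(-1,1)}^2 = \pi/U(c)$.

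Next I would expand $\Gamma_m^c = \sum_{n \in \N_0} \widetilde{c}_n \widetilde{P}_n$ in the orthonormal Legendre basis $\widetilde{P}_n = \sqrt{n+1/2}\,P_n$, so that $|P_n|\le 1$ on $[-1,1]$ gives
$$|\Gamma_m^c(y)| \le |\widetilde{c}_m|\sqrt{m+1/2} + \sum_{n\ne m}|\widetilde{c}_n|\sqrt{n+1/2}.$$
To control the $\widetilde{c}_n$, I would normalize the ODE by subtracting $A = 1/2 - (U(c)c/\pi)^2$: writing $\widetilde{q} = q^c - A$ and $\widetilde{\mu} = (U(c)/\pi)^2\chi_m^c - A$, Proposition \ref{t12i} yields $\widetilde{q}(y) \in [-R(c),0]$ and Proposition \ref{t12ii} yields $\widetilde{\mu}\in[m(m+1)-R(c), m(m+1)]$. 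With $D = -((1-y^2)\cdot')'$ we have $(D-\widetilde{\mu})\Gamma_m^c = -\widetilde{q}\Gamma_m^c$, so projecting on $\widetilde{P}_n$ gives $(n(n+1)-\widetilde{\mu})\widetilde{c}_n = -\langle\widetilde{q}\Gamma_m^c,\widetilde{P}_n\rangle$, and by Parseval $\sum_n \widetilde{c}_n^2(n(n+1)-\widetilde{\mu})^2 = \|\widetilde{q}\Gamma_m^c\|_{L^2}^2 \le R(c)^2\pi/U(c)$.

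For the $n=m$ contribution I would combine the trivial estimate $|\widetilde{c}_m|\le\sqrt{\pi/U(c)}$ (producing the $\sqrt{m+1/2}$ term) with the refined bound $|\widetilde{c}_m||m(m+1)-\widetilde{\mu}|\le R(c)\sqrt{\pi/U(c)}$ coming from the projected ODE (which produces the $2R(c)/(m+1/2)$ term after the Lemma \ref{lem:upper_L} simplifications). For the tail, Cauchy--Schwarz yields
$$\sum_{n\ne m}|\widetilde{c}_n|\sqrt{n+1/2}\le R(c)\sqrt{\pi/U(c)}\,\sqrt{S}, \qquad S := \sum_{n\ne m}\frac{n+1/2}{(n(n+1)-\widetilde{\mu})^2}.$$
Writing $n(n+1)-\widetilde{\mu} = (n+1/2)^2-\gamma^2$ with $\gamma^2 = \widetilde{\mu}+1/4\in[(m+1/2)^2-R(c),(m+1/2)^2]$, and using the partial-fraction identity
$$\frac{u}{(u^2-\gamma^2)^2}=\frac{1}{4\gamma}\left[\frac{1}{(u-\gamma)^2}-\frac{1}{(u+\gamma)^2}\right],$$
one evaluates $S$ by comparison with $\sum_{k\ge 1}k^{-2}=\pi^2/6$, obtaining $S\lesssim 1/(m+1/2)$ with the constant $2/3$ producing the $\sqrt{2/3}\,R(c)/\sqrt{m+1/2}$ term.

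Finally I would collect the estimates: from Lemma \ref{lem:upper_L}, $U(c)\ge \sqrt{2}e^{2c}/\sinh(4c)$, so $\sqrt{\pi/U(c)}\,\|1/F\|_{L^\infty}$ simplifies via Lemma \ref{lemmaF} and classical hyperbolic identities to the prefactor $(\pi/e^{2c})(1+4c^2/3)^{1/2}(\sinh(4c)/(4c))^{1/4}\cosh(2c)^{1/2}$, and the three contributions assemble into the bracketed factor. The main obstacle is the careful evaluation of the tail sum $S$ uniformly over $\gamma\in[\sqrt{(m+1/2)^2-R(c)},m+1/2]$: the term closest to $\gamma$ (namely $n=m\pm 1$) must be estimated sharply enough to produce the stated constant $\sqrt{2/3}$, and the refined $n=m$ bound must be combined with the trivial one using the identity $|\widetilde{c}_m|=\min(\sqrt{\pi/U(c)},R(c)\sqrt{\pi/U(c)}/|m(m+1)-\widetilde{\mu}|)$-style reasoning to split out the $2R(c)/(m+1/2)$ piece.
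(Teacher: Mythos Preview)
Your reduction to the transformed function $\Gamma_m^c$, the normalization $\|\Gamma_m^c\|_{L^2}^2=\pi/U(c)$, and the use of Lemma \ref{lemmaF} for $\|1/F\|_{L^\infty}$ all match the paper. The divergence is in how you control $\|\Gamma_m^c\|_{L^\infty}$: you propose a spectral expansion in the Legendre basis, whereas the paper uses variation of constants for the Legendre equation at the integer parameter $m(m+1)$, writing
\[
\widetilde{\Gamma}_m^c(y)=A\overline{P}_m(y)+\frac{1}{m+1/2}\int_y^1 L_m(y,z)\sqrt{1-z^2}\,G_c(z)\widetilde{\Gamma}_m^c(z)\,dz,
\]
and invoking the uniform bound $|L_m(y,z)|\le 1$ from \cite{bonami2016uniform}. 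That pointwise bound on the integral term gives $|\widetilde{\Gamma}_m^c-A\overline{P}_m|\le R(c)(1-y)/(m+1/2)$, which directly produces the $2R(c)/(m+1/2)$ piece, and its $L^2$ consequence $\|\widetilde{\Gamma}_m^c-A\overline{P}_m\|_{L^2}^2\le 2R(c)^2/(3(m+1/2)^2)$ together with the triangle inequality yields $|A|\le 1+\sqrt{2/3}\,R(c)/(m+1/2)$.

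Your spectral route has a genuine gap. The tail sum
\[
S=\sum_{n\ne m}\frac{n+1/2}{(n(n+1)-\widetilde{\mu})^2}
\]
is \emph{not} uniformly controlled by $C/(m+1/2)$: since $\widetilde{\mu}\in[m(m+1)-R(c),\,m(m+1)]$, the $n=m-1$ denominator lies in $[-2m,\,-2m+R(c)]$ and can be arbitrarily small (even zero) whenever $R(c)\ge 2m$, which certainly occurs for small $m$ and large $c$. Nothing in Propositions \ref{t12i}--\ref{t12ii} rules this out, so $S$ can blow up and your Cauchy--Schwarz step fails. Even in the favorable boundary case $\widetilde{\mu}=m(m+1)$, your partial-fraction identity gives
\[
S=\frac{1}{4(m+1/2)}\Bigl(2\sum_{k=1}^{m}\frac{1}{k^2}+\frac{1}{(2m+1)^2}\Bigr)\ \xrightarrow[m\to\infty]{}\ \frac{\pi^2}{12(m+1/2)},
\]
and $\pi^2/12>2/3$, so the constant $\sqrt{2/3}$ is unattainable this way. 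Finally, your plan for the $2R(c)/(m+1/2)$ term is circular: the ``refined'' bound $|\widetilde{c}_m|\le R(c)\sqrt{\pi/U(c)}/|m(m+1)-\widetilde{\mu}|$ is never sharper than the trivial one because $|m(m+1)-\widetilde{\mu}|\le R(c)$, so no splitting of the $n=m$ contribution can produce that extra piece. The paper's Green-function approach sidesteps all of this by anchoring the perturbation at the exact Legendre eigenvalue $m(m+1)$ rather than at the shifted value $\widetilde{\mu}$.
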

%$$R(c)=\dfrac{2}{\pi^2} +\left(\dfrac{U(c)c}{\pi}\right)^2\left(\cosh(4c)\left(1+ \dfrac{2c\sinh(4c)}{3\left(\cosh(4c)-1\right)}\right)-1\right).$$
%%Corollary \ref{cor1} deduces a more explicit upper bound from Theorem \ref{prop:sup}.
%\begin{corollary}\label{cor1}
%Under the assumptions of Theorem \ref{prop:sup}, we have 
%\begin{align*}
%	\left\| g_{m}^{c}\right\|_{L^{\infty}([-1,	1])} & \leq  336.3\left(c^{6}\vee \dfrac{1}{\sqrt{c}}\right)  \sqrt{m+\dfrac{1}{2}}.
%\end{align*} 
%\end{corollary}
%\noindent{\bf Proofs of Theorem \ref{prop:sup}. and Corollary \ref{cor1}.}
%\blue{(je ne suis pas sur, tu as ecrit le texte suivant)} \green{main difficulty here is to relate the eigenvalue equation for $\mathcal{L}$ in case (1) and  $\mathcal{L}$ in case (3) implies a change of variables  and of function and requires Proposition \ref{prop:Sturm}.}
\begin{proof}
The proof of this result relies on those of Section \ref{sec:diff}. Additional ingredients are common to those used in the proof of Proposition 5 in \cite{bonami2016uniform} but we repeat them for the sake of completeness. 
Using %the change of variables %$y=Y(x)$ defined in 
\eqref{eq:y} and %the change of functions 
\eqref{eq:gamma} with $\psi=g_m^c$,  and denoting by $\Gamma_m^c(\cdot)= F(\cdot)g_m^c\left(Y^{-1}(\cdot)\right)$ and 
$\widetilde{\Gamma}_m^c = \Gamma_m^c \sqrt{U(c)/\pi}$, which is real valued, in the first display, and \eqref{eq:YprimeE} and \eqref{eq:gamma} in the second display, we obtain
\begin{align}
\int_{-1}^{1} \left| \widetilde{\Gamma}_m^c(y) \right|^2 dy %&= \frac{U(c)}{\pi}\int_{-1}^{1} \left| F(y) \right|^2  \left| g_m^c(Y^{-1}(y)) \right|^2 dy \notag\\
 &= \frac{U(c)}{\pi}\int_{-1}^{1} Y'(x) \left| F(Y(x)) \right|^2  \left| g_m^c(x) \right|^2 dx \notag\\
   &= \int_{-1}^{1} \dfrac{ \cos(X(x))}{\sqrt{1-\left(\sin(X(x))\right)^2}}   \left| g_m^c(x) \right|^2 dx= 1 .\notag %\label{eq:normalized}
\end{align}
Also, by Proposition \ref{t12i}, 
%Then, \eqref{eq:EQ} yields, 	
for all $y \in (-1,1)$,	
\begin{equation}\label{ednh}
\left((1-y^2)\left(\widetilde{\Gamma}_m^c\right)'\right)'(y) + m(m+1) \widetilde{\Gamma}_m^c(y)  = \left(m(m+1) - \left(\df{U(c)}{\pi}\right)^2\chi_{m}^{c}+ q^c(y)\right) \widetilde{\Gamma}_m^c(y).
\end{equation}
By the method of variation of constants, %and knowledge of the solutions to the homogenous equation corresponding to the left-hand side of \eqref{ednh}, 
there exist $ A,B  \in  \R $ %\blue{(dans $\R$ je ne sais pas, dans $\C$ d'accord)} 
such that, for all $y\in(-1,1)$, 
\begin{equation}\label{eq:Gamma1}
\widetilde{\Gamma}_m^c(y) = A\overline{P}_m(y)  + BQ_m(y) + \df{1}{m+1/2} \int_{y}^1L_m(y,z)\sqrt{1-z^2}G_c(z)\widetilde{\Gamma}_m^c(z)dz,
\end{equation}
where
% The equation 
%\[  \quad  (1-y^2)\left(\widetilde{\Gamma}_m^c\right)''(y)- 2 y\left(\widetilde{\Gamma}_m^c\right)'(y) + m(m+1)\widetilde{\Gamma}_m^c(y) =0 \]
%on $(-1,1)$ has the two linearly independent solutions  $ \overline{P}_m $ and $ Q_m $, where 
$ \overline{P}_m $ is the Legendre polynomial of degree $m$ and %, normalized to be of 
norm 1 in $L^2(-1,1)$, $ Q_m $ is the Legendre function of the second kind, %. Like using the method of  variation of constants (the Wronskian is $ W\left( \overline{P}_m,Q_m\right) = (m+1/2)/(1-y^2) $) and denoting by 
 $ G_c(y)=  m(m+1) - (U(c)/\pi)^2\chi_{m}^{c} + q^c(y)$, and  
$  L_m(y,z)=\sqrt{1-z^2}\left(  \overline{P}_m(y)Q_m(z)  - \overline{P}_m(z)Q_m(y) \right)$.
By propositions \ref{t12i} and \ref{t12ii}, we have $ \left\| G_c \right\|_{L^{\infty}([-1,1])} \leq  R(c)$.  
Because $ \Gamma_m^c(1) $ is finite, $ \overline{P}_m $ is bounded but $ \lim_{y\to1}Q_m(y) =\infty$,  we know that $B=0$.
By the result after Lemma 9 in \cite{bonami2016uniform}, for all $0\le y\le z\le 1$, $|L_m(y,z)|\le 1$. Hence, by the Cauchy-Schwarz inequality, we have, for all $y \in (1,1)$,
\begin{align}
\left|\widetilde{\Gamma}_m^c(y) - A\overline{P}_m(y)\right| &\leq \dfrac{1}{m+1/2} \left(\int_{y}^1\left(L_m(y,z)\right)^2(1-z^2) dz\right)^{1/2} \left(\int_{y}^1G_c(z)^2\widetilde{\Gamma}_m^c(z)^2dz\right)^{1/2}, \notag\\
&\leq  \df{R(c)}{m+1/2}   (1-y)\label{eq:inter1}
%& \leq \df{U(c)^2K(c)}{\sqrt{2}\pi^2(m+1/2)}(1-y).	 \label{eq:inter1}
\end{align}
so 
\begin{align*}
\int_{-1}^{1}\left|\widetilde{\Gamma}_m^c(y) - A\overline{P}_m(y)\right|^2 dy \leq  \df{2R(c)^2}{3(m+1/2)^2}\end{align*}
and, by the Cauchy-Schwarz inequality, 
\begin{align*}
\int_{-1}^{1}\left|\widetilde{\Gamma}_m^c(y) - A\overline{P}_m(y)\right|^2 dy  \geq& 1+A^2 -2|A|  \int_{-1}^{1}\left|\widetilde{\Gamma}_m^c(y)\right|^2 dy  \int_{-1}^{1} \left|\overline{P}_m(y)\right|^2 dy\\
\geq & (1-|A|)^2, % \quad (\text{by} \ \eqref{eq:normalized}), 
\end{align*}
%\blue{(j'ai ecrit que tu as besoin de $A>0$ tu n'as pas vu? Je te propose une facon de faire ci-dessous as tu regarde?)}. \blue{(il y a donc un probleme au dessus, autre idee, partir de \eqref{eq:inter1} puis utiliser
%$|A|\left|\overline{P}_m(y)\right|^2\le\left|\widetilde{\Gamma}_m^c(y) - A\overline{P}_m(y)\right|\left|\overline{P}_m(y)\right| +\left|\widetilde{\Gamma}_m^c(y)\right|\left|\overline{P}_m(y)\right|,$ utiliser Cauchy-Schwartz, integrer, et utiliser que $|1-A|\le 1+|A|$, par contre ca parrait hyper grossier, je ne comprends pas comment ils font dans le papier. Peut etre utilisent-ils que les fonctions sont reelles, je ne sais pas si c'est le cas ici)}
hence 
\begin{equation}\label{bA}
|A| \leq 1+\sqrt{\frac23} \df{R(c)}{m+1/2}.
\end{equation}
Also, by \eqref{eq:F_bound} and Lemma \ref{lem:upper_L}, we have 
%$$
% \dfrac{e^{2c}}{\sinh(2c)(1+\cosh(4c))^{1/2}} < U(c) < \dfrac{\pi e^{2c}}{\sinh(2c)(1+\cosh(4c))^{1/2}}. 
%$$
%$$
%\|1/F\|_{L^{\infty}([-1,1])}^4\le 
%\dfrac{\pi^2e^{-4c}}{2c}\dfrac{ \sinh(2c)^2 (1+\cosh(4c)) (1+4c^2/3)^2  }{ \sinh(4c)\left(
%\cosh(4c)-1\right)}
%$$
\begin{align*}
\|1/F\|_{L^{\infty}([-1,1])}\sqrt{\dfrac{\pi}{U(c)}}  & \leq \pi e^{-2c} \left(1+\dfrac{4c^2}{3}\right)^{1/2} \left(\frac{\sinh(4c)}{4c}\right)^{1/4}\cosh(2c)^{1/2},%\\
%&\leq \dfrac{\pi}{2\sqrt{c}}\left(1+\dfrac{4c^2}{3}\right)^{1/2},
\end{align*}
and we obtain the result by \eqref{eq:inter1}, \eqref{bA}, and $\left\| \overline{P}_m \right\|_{L^{\infty}([-1,1])} \leq \sqrt{m+1/2}$.
\end{proof}
\begin{corollary}
For all $ m\in\N_0 $ and $c>0$, 
\begin{equation}\label{bunif} 
\left\| g_{m}^{c}\right\|_{L^{\infty}([-1,1])} \leq H(c) \sqrt{m+\df{1}{2}}, 
\end{equation}
where 
$$H(c)= \pi\sqrt{1+\dfrac{4c^2}{3}}\left(1+ 2\sqrt{2}\left(2+ \df{1}{\sqrt{3}}\right) \left(\dfrac{2}{\pi^2} +\frac{8}{3}\left(1+2c\right)\left(c^2 + \dfrac{9c}{8} + \frac{1}{2}\right)\right)  \right).$$
\end{corollary}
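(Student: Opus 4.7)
The plan is to derive the corollary by simplifying the bound of Theorem~\ref{prop:sup}. First, factor $\sqrt{m+1/2}$ out of the final parenthesis, rewriting it as $\sqrt{m+1/2}\bigl(1+2R(c)/(m+1/2)^{3/2}+\sqrt{2/3}\,R(c)/(m+1/2)\bigr)$. For $m\in\N_0$ the inequality $m+1/2\geq 1/2$ gives $1/(m+1/2)^{3/2}\leq 2\sqrt{2}$ and $1/(m+1/2)\leq 2$, so this parenthesis is bounded by $1+\alpha R(c)$ with $\alpha=4\sqrt{2}+2\sqrt{2/3}=2\sqrt{2}(2+1/\sqrt{3})$, matching the linear dependence on $R(c)$ visible in $H(c)$.

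Denote the $c$-dependent prefactor of Theorem~\ref{prop:sup} by $P(c)=(\sinh(4c)/(4c))^{1/4}\cosh(2c)^{1/2}e^{-2c}$. The pointwise inequality $\sinh(2c)\leq 2c\cosh(2c)$ (proved by differentiation) gives $\sinh(4c)/(4c)=\sinh(2c)\cosh(2c)/(2c)\leq \cosh^2(2c)$, hence $P(c)\leq \cosh(2c)/e^{2c}=(1+e^{-4c})/2\leq 1$. The claim therefore reduces to $P(c)(1+\alpha R(c))\leq 1+\alpha\bigl(2/\pi^2+\tfrac{8}{3}(1+2c)(c^2+9c/8+1/2)\bigr)$. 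Splitting $R(c)=2/\pi^2+(U(c)c/\pi)^2T(c)$ with $T(c)=\cosh(4c)(1+(c/3)\coth(2c))-1+2c\sinh(4c)$ and using $P(c)\leq 1$ to absorb the constant and the $2/\pi^2$ term, it suffices to prove
$$P(c)\left(\frac{U(c)c}{\pi}\right)^2T(c)\leq \tfrac{8}{3}(1+2c)(c^2+9c/8+1/2).$$
Here I use the sharper bound $P(c)\leq \cosh(2c)/e^{2c}$ combined with Lemma~\ref{lem:upper_L} in the form $(U(c)c/\pi)^2\leq 2c^2e^{4c}/\sinh^2(4c)$ and the identity $\sinh^2(4c)=4\sinh^2(2c)\cosh^2(2c)$ to obtain $P(c)(U(c)c/\pi)^2\leq c^2e^{2c}/(2\sinh^2(2c)\cosh(2c))$.

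Next, simplify $T(c)$ via the identity $\cosh(4c)\coth(2c)=\sinh(4c)+\coth(2c)$ (consequence of $\cosh(4c)=2\sinh^2(2c)+1$) to get $T(c)=2\sinh^2(2c)+(c/3)\coth(2c)+(7c/3)\sinh(4c)$. Multiplying each summand by $c^2e^{2c}/(2\sinh^2(2c)\cosh(2c))$ and applying the elementary estimates $e^{2c}/\cosh(2c)\leq 2$, $e^{2c}/\sinh(2c)=1+\coth(2c)\leq 2+1/(2c)$, and $\sinh(2c)\geq 2c$ yields respectively $2c^2$, at most $c/12+1/48$, and at most $14c^3/3+7c^2/6$. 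Summing,
$$P(c)\left(\frac{U(c)c}{\pi}\right)^2T(c)\leq \frac{14c^3}{3}+\frac{19c^2}{6}+\frac{c}{12}+\frac{1}{48},$$
which is term-by-term strictly dominated by $16c^3/3+26c^2/3+17c/3+4/3=\tfrac{8}{3}(1+2c)(c^2+9c/8+1/2)$, completing the proof. The main obstacle is precisely this last step: Lemma~\ref{lem:upper_L} alone is too loose (by a factor $\pi^2$ asymptotically) to bound $(U(c)c/\pi)^2T(c)$ directly by the target polynomial, so one must exploit the decay of $P(c)$ (which behaves like $c^{-1/4}$ at infinity) jointly with Lemma~\ref{lem:upper_L} so that the exponential growth of $T(c)$ is absorbed by the $\sinh^{-2}(2c)\cosh^{-1}(2c)$ factor.
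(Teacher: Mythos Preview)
Your argument is correct, and the joint treatment of $P(c)$ and the $(U(c)c/\pi)^2T(c)$ term is in fact essential. The paper proceeds differently: it first claims the separate bound
\[
R(c)<\frac{2}{\pi^2}+\frac{8c e^{4c}}{3\sinh(4c)}\Bigl(c^2+\frac{9c}{8}+\frac{1}{2}\Bigr)
\]
directly from Lemma~\ref{lem:upper_L}, and only afterwards uses $P(c)\le 1$. But that intermediate inequality is not valid: replacing $(U(c)c/\pi)^2$ by its Lemma~\ref{lem:upper_L} upper bound $2c^2e^{4c}/\sinh^2(4c)$ and reducing, one needs $cT(c)/\sinh(4c)\le (4/3)(c^2+9c/8+1/2)$, whose leading term on the left is $(7/3)c^2$ versus $(4/3)c^2$ on the right (concretely, at $c=2$ the left side is about $11.3$ and the right about $9$). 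Your route avoids this by feeding the sharper estimate $P(c)\le\cosh(2c)/e^{2c}$ into the product $P(c)(U(c)c/\pi)^2$ before multiplying by $T(c)$; the extra factor, which tends to $1/2$, brings the cubic coefficient down from $28/3$ to $14/3$ and makes the comparison with $16/3$ go through. One small remark: the parenthetical ``by a factor $\pi^2$'' overstates the deficit needed here (the shortfall in the leading coefficient is only $7/4$), but this does not affect the argument. Your derivation of $P(c)\le 1$ via $\sinh(2c)\le 2c\cosh(2c)$ is also a bit different from the paper's factorisation $((1-e^{-4c})/(4c))^{1/4}((1+e^{-4c})/2)^{3/4}$, though both are straightforward.
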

\begin{proof}
By the above results, \eqref{bunif} holds with
$$
 \pi e^{-2c} \left(1+\dfrac{4c^2}{3}\right)^{1/2} \left(\frac{\sinh(4c)}{4c}\right)^{1/4}\cosh(2c)^{1/2}\left(1+ 2\sqrt{2}R(c) \left(2+ \df{1}{\sqrt{3}}\right)  \right)
$$
in place of $H(c)$ and 
$$R(c)<\dfrac{2}{\pi^2} +2\left(\frac{c e^{2c}}{\sinh(4c)}\right)^2\left(\left(\cosh(4c)\left(1+ \dfrac{c}{3}\coth(2c)\right)-1\right) + 2c \sinh(4c)\right).$$
hence, using that $e^c\ge1+c$ which implies  $c\coth(c)\le c+2$, 
$$R(c)<\dfrac{2}{\pi^2} +\frac{8c e^{4c}}{3\sinh(4c)}\left(c^2 + \dfrac{9c}{8} + \frac{1}{2}\right)<\dfrac{2}{\pi^2} +\frac{8}{3}\left(1+2c\right)\left(c^2 + \dfrac{9c}{8} + \frac{1}{2}\right).$$
We obtain the result, using 
\begin{align*}
e^{-2c} \left(\frac{\sinh(4c)}{4c}\right)^{1/4}\cosh(2c)^{1/2}&=e^{-2c} \left(\frac{\sinh(2c)}{2c}\right)^{1/4}\cosh(2c)^{3/4}\\&=\left(\frac{1-e^{-4c}}{4c}\right)^{1/4}\left(
\frac{1+e^{-4c}}{2}\right)^{3/4}\le 1.
\end{align*}
\end{proof}
As a result we have, for a constant $C_0$, 
\begin{equation}\label{bunif0} 
\left\| g_{m}^{c}\right\|_{L^{\infty}([-1,1])} \leq C_0 \left(c \vee 1 \right)^4 \sqrt{m+\dfrac{1}{2}}. 
\end{equation}
Such a square-root bound also holds for Legendre polynomials and PSWF. It allows for compressed sensing recovery estimates (see Sections 3.2 and 4.1 in \cite{Gosse13}). It is possible that the factor $\left(c \vee 1 \right)^4$ could be improved. For the PSWF, \cite{bonami2016uniform} obtain a factor $c^2$. Showing sharpness in $c$ in either case would require a lower bound. This bound is used in \cite{estimation}, even for diverging $c$, to obtain an adaptive estimator. It could be used to justify the data-driven selection rule in Section \ref{sec:sim}.   

\section{Numerical method to obtain the SVD of $\mathcal{F}_{b,c}$}\label{sec:nmerical}
In recent years, efficient numerical methods to obtain the SVD of the truncated Fourier transform acting on the space of bandlimited functions have been developed. This section presents how to deal similarly with nonbandlimited functions.

The strategy we implement in Section \ref{sec:sim} is to first compute a numerical approximation of the right singular functions (the PSWF).
We use that the first coefficients of the decomposition of the PSWF on the Legendre polynomials can be obtained by solving for the eigenvectors of two tridiagonal symmetric Toeplitz matrices (for even and odd values of $m$, see Section 2.6 in \cite{Osipov}).  %The analogue $\mathcal{F}_{c}$ of $\mathcal{F}_{b,c}$ in this case uses the space of square-integrable functions with support in $[-1,1]$ instead of $L^2(\cosh(b\cdot))$. 
We can then compute their image by $\mathcal{F}_c^{W_{[-1,1]}*}$ (see \eqref{eqdefFpSWF} for the definition of $\mathcal{F}_{c}^{W_{[-1,1]}}$) because, by \cite{estimation}, $\mathcal{F}_c^{W_{[-1,1]}*}=\mathcal{R}\left[\indic\{[-1,1]\}\mathcal{F}_{c}^{W_{[-1,1]}}\mathcal{E}\right]$ applied to the %normalized 
Legendre polynomials %$\left(\widetilde{P}_k\right)_{k\in\N_0}$ 
has a closed form involving the Bessel functions of the first kind (see (18.17.19) in \cite{olver2010nist}). 

For nonbandlimited functions, we propose to rely on the differential operator $ \mathcal{L}$ in case (1) at the beginning of Section \ref{sec:diff}. We have used that because $\mathcal{Q}_{c}$ commutes with $ \mathcal{L}$,  
$\left(g_m^{c}\right)_{m\in\N_0}$ are the eigenfunctions of $\mathcal{L}$. %We use $ \mathcal{L}_{Rc}^{W_{[-1,1]}}$,
%Using that $\mathcal{Q}_{Rc}^{W_{[-1,1]}}$ commutes with the differential operator %a second order differential operator commutes with a Sturm-Liouville operator $
%$ \mathcal{L}_{Rc}^{W_{[-1,1]}}: \psi\mapsto    \df{d}{dx}\left[(1-x^2)\df{d\psi}{dx}\right] -(Rc)^2 x^2 \psi$,  
%$\left(g_m^{W_{[-1,1]},Rc}\right)_{m\in\N}$ are the eigenfunctions of $\mathcal{L}_{Rc}^{W_{[-1,1]}}$. 
To obtain a numerical approximation of these functions, 
% compute the SVD of $\mathcal{F}_{b,c}$, 
we use $ \mathcal{L}$, whose eigenvalues 
are of the order of $m^2$ (see Proposition \ref{t12ii}), rather than $\mathcal{Q}_{c}$, whose eigenvalues decay to zero  exponentially. This is achieved by solving numerically for the eigenfunctions of a singular Sturm-Liouville operator.  
We approximate the values of the eigenfunctions on a grid on $[-1,1]$ using the MATLAB package MATSLISE 2 (it implements constant perturbation methods for limit point nonoscillatory singular problems, see \cite{ledoux2007study} chapters 6 and 7 for the method and an analysis of the numerical approximation error). By Proposition A.1 in \cite{estimation}, we have $\varphi_m^{b,c}(\cdot)=\varphi_m^{1,c/b}(b \cdot)\sqrt{b}$ for all $m\in\N_0$. 
Finally, we use $\mathcal{F}_{1,c/b}^*\left[g_m^{c/b}\right]=\sigma_m^{1,c/b}\varphi_m^{1,c/b}$ and that $\varphi_m^{1,c/b}$ has norm 1 to obtain the remaining of the SVD.  $\mathcal{F}_{1,c/b}^*\left[g_m^{c/b}\right]$ is computed using the fast Fourier transform. 

\section{Illustration: application to analytic continuation}\label{sec:sim}
We solve for $f$ in \eqref{eq:f} in Case (a) $ f= 0.5/\cosh(2\cdot)$, which is not bandlimited, and Case (b)  $f = \text{sinc}(2\cdot)/6$ which is bandlimited, when $c=0.5$, $x_0=0$, and $\xi = \cos(50 \cdot)$.   %We observe
%$$ f_{\delta}(c x + x_0) =\dfrac{1}{2\cosh(2(c x + x_0))}+ \delta \xi(x) \quad \text{and} \quad  f_{\delta}(c x + x_0) = \frac{\text{sinc}(2(c x + x_0))}{6}+ \delta \xi(x),$$
%for $x \in [-1,1]$, where $c= 0.5$, $x_0 = 0$, and $\xi(\cdot) = \cos(50 \cdot)$.
We use the %the spectral cut-off 
approximation $f_{\delta}^{N}$ described in Section \ref{sec:analy} %\ref{eqdef} 
with $b=1$ for Case (a), $b=1/6.5$ for Case (b). By analogy with the statistical problem where $\delta\xi$ is random rather than bounded,  we use the terminology estimator. 

In Case (a), we only consider a reconstruction using the functions $g_m^{c/b}$ because $f$ is not bandlimited. Proving that PSWF for large $b$ can perform well in this case would require a careful analysis of how well such expansions can approximate nonbandlimited functions. This is out of scope of this paper.  
In  Case (b), we compare  $f_{\delta}^{N}$  to a similar estimator based on \eqref{eq:ft1} but with the PSWF instead of $g_m^c$. This approach can only be used to perform analytic continuation of bandlimited functions when the researcher knows an interval which contains the bandlimits. %\cite{Gosse} considers an estimator partly based on the PSWF, but with a different formulation which yields a spatial limitation in the extrapolation. \blue{(je n'aime pas ces deux phrases et j'ai l'impression que Gosse et meme Candes exploitent de la sparsite dans la decomposition en serie de Slepian ce qui est peut etre liee a l'adaptation pour de classes de regularite dans notre papier de stats)}
In contrast, even for bandlimited functions, using the estimator based on $g^{c/b}_m$ allows to perform analytic continuation without the knowledge of an interval containing the support of the Fourier transform of the function. %Knowledge of the compactness of the support improves the convergence rates (see \cite{estimation}) but the estimator based on $g^{c/b}_m$ is the only one to be robust in the nonbandlimited case \blue{(?)}. \blue{(Pourquoi ne pas parler plutot que meme dans le cas bandlimited cela permet d'eviter de devoir connaitre un intervalle contenant la bande?)} 
Figure \ref{fig:Sp2} shows that $f_{\delta}^{N}$ performs almost as well as the ``oracle" method which uses the PSWF and precise knowledge of the bandlimits. 

%In practice, the class in which the true function belongs is unknown and the reconstruction is very sensitive to the choice of the smoothing parameter. %The same issue arises with the proposed method which involves the SVD as can be seen in where the number of eigenfunctions depends on parameters of the class of functions in which $f$ belongs. 
Let us now consider a practical feasible selection method for the parameter $N=\widehat{N}$ which does not depend on the unknowns but which uses the SVD. %rather than approximation. 
We use a type of Goldenshluger-Lepski method similar to the one in \cite{estimation} which delivers an optimal minimax estimator for the generalization of the tomography problem: \begin{align*}
 &\widehat{N} \in \underset{N' \in \{0,\dots, N_{\max}\}}{\text{argmin}} B(N) + \Sigma(N),    \\
 & B(N) = \sup_{N \leq N' \leq N_{\max}} \left(\left\|  F_{\delta}^{N'\vee N} -   F_{\delta}^{ N}\right\|^2_{L^2(\cosh(b\cdot))} + \Sigma(N') \right)_{+},\  \Sigma(N) =  \dfrac{  2\pi  c \delta^2 e^{2\beta(c/b) N}}{1 - e^{-2\beta(c/b)}} ,
\end{align*}
and $N_{\max} =\lfloor \log(1/\delta)\rfloor$. 
Performing analytic continuation using \eqref{eq:ft1} requires the approximation of the scalar products on $[-1,1]$ of the observed function $f_{\delta}$ with $g_m^{c/b}$. We use the package MATSLISE 2 to compute the value of the functions $\left(g_m^{c/b}\right)_{m=0}^{N_{\max}}$ at the $n$ first Gauss-Legendre quadrature nodes. Results are presented in  figures \ref{fig:Sp4} and \ref{fig:Sp3}, where we use a $2^{12}$ resolution in the Fast Fourier transform, $n=15000$, and precision of $10^{-10}$ for the computation of the eigenvalues in MATSLISE 2, which also controls the precision of the computation of the eigenfunctions in the function computeEigenfunction of MATSLISE 2 despite that this is not explicitly computed (see sections 7.2.3 and 5.2 in \cite{ledoux2007study} for examples). The numerical results show that the data-driven choice of the degree of regularization $\widehat{N}$ works well in practice. A classical criticism is that using a data-driven truncated SVD can be costly. 
However, $N_{\max}$ only depends on the noise level and grows logarithmically.  In practice, we obtain $\widehat{N}\le N_{\max}$ by computing in advance a relatively small number of singular values and functions. The approach of Section \ref{sec:nmerical} is fast to implement, so the computational price of computing the SVD is a small price to pay for having a feasible data-driven smoothing method which we expect to be optimal.

For the sake of conciseness, this paper does not study the effect of the various discretizations which can be carried out with arbitrary precision. Rather, we used in the numerical illustration conservative choices for those. This paper also does not consider the statistical problem, prove minimax lower bounds for it, and the adaptivity of the data-driven rule giving $N=\widehat{N}$. This is the object of future work. The interested reader can refer to \cite{estimation} for the full statistical analysis for estimation of the density of random coefficients in the linear random coefficients model.  

\begin{figure}[H]
\centering
\includegraphics[width=0.85\linewidth, height=0.25\textheight]{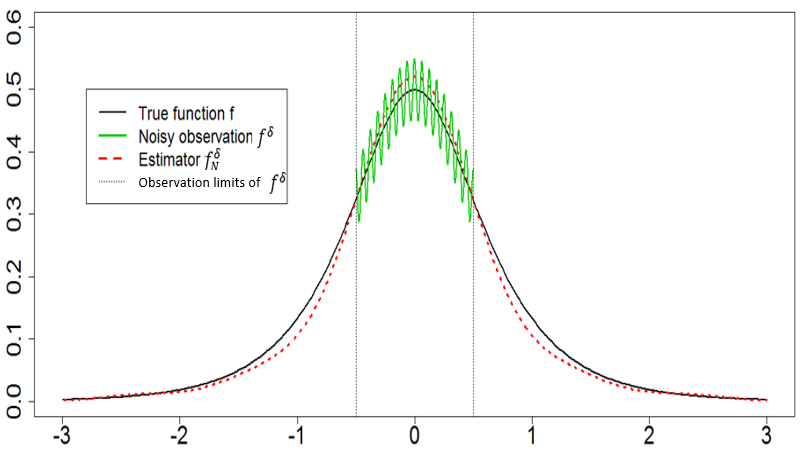}
\caption{%Analytic continuation in 
Case (a) with noise ($\delta=0.05$), where $F_{\delta}^N$ in \eqref{eq:ft1} uses $g^{c/b}_m$.}
\label{fig:Sp4}
\end{figure}

\begin{figure}[H]
\centering
\includegraphics[width=0.85\linewidth, height=0.25\textheight]{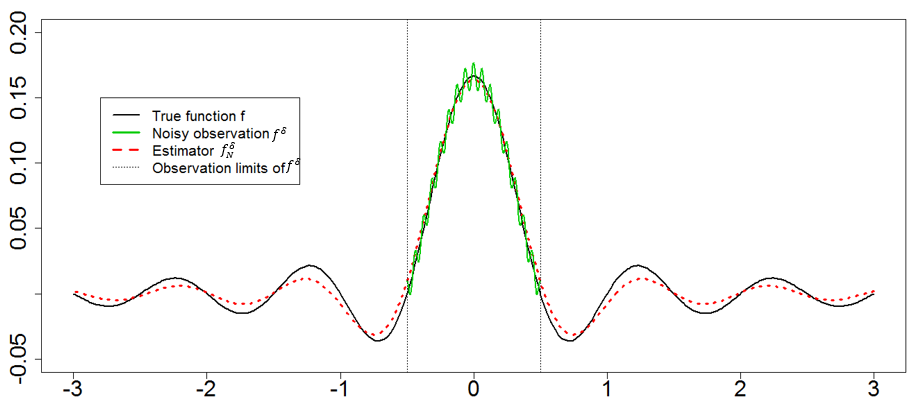}
\caption{%Analytic continuation in 
Case (b) with noise ($\delta=0.01$), where  $F_{\delta}^N$ in \eqref{eq:ft1} uses $g^{c/b}_m$.}
\label{fig:Sp3}
\end{figure}
\begin{figure}[H]
\centering
\includegraphics[width=0.85\linewidth, height=0.25\textheight]{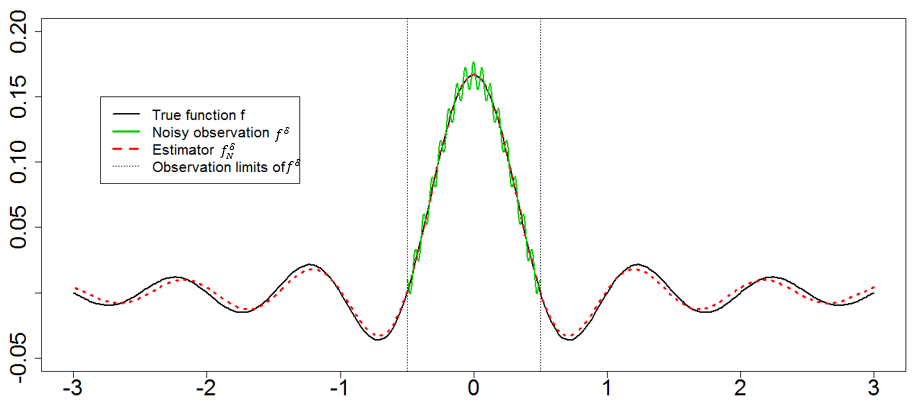}
\caption{%Analytic continuation in 
Case (b) with noise ($\delta=0.01$), where  $F_{\delta}^N$ in \eqref{eq:ft1} uses the PSWF.}
\label{fig:Sp2}
\end{figure}

\bibliographystyle{abbrv}
\bibliography{bib_02_5}

%\printbibliography

\end{document}